\documentclass[reqno,12pt]{amsart}
\usepackage{amsmath,amssymb,amsthm}
\usepackage{multirow}
\usepackage{url}
\usepackage{mathtools}

\numberwithin{equation}{section}

\textwidth15.6cm 
\textheight23cm 
\hoffset-1.7cm 
\voffset-.5cm


\catcode`\@=11
\def\section{\@startsection{section}{1}%
 \z@{.7\linespacing\@plus\linespacing}{.5\linespacing}%
 {\normalfont\Large\bfseries\scshape\centering}}

\def\subsection{\@startsection{subsection}{2}%
  \z@{.5\linespacing\@plus\linespacing}{.5\linespacing}%
  {\normalfont\large\bfseries\scshape}}

\def\subsubsection{\@startsection{subsubsection}{3}%
 \z@{.5\linespacing\@plus\linespacing}{-.5em}
 {\normalfont\large\bfseries}}
\catcode`\@=12

\newtheorem{theorem}{Theorem}
\newtheorem{prop}[theorem]{Proposition}
\newtheorem{lemma}[theorem]{Lemma}
\newtheorem{corollary}[theorem]{Corollary}
\newtheorem{conjecture}[theorem]{Conjecture}

\theoremstyle{remark}
\newtheorem*{remark}{Remark}
\newtheorem*{remarks}{Remarks}

\newcommand{\Si}{S_{\hspace{-0.6pt}i}}
\newcommand{\Sj}{S_{\hspace{-1pt}j}}
\newcommand{\Sn}{S_{\hspace{-0.6pt}n}}

\def\al{\alpha}
\def\CT{\operatorname{CT}}
\def\coef#1{\left\langle#1\right\rangle}
\def\fl#1{\lfloor#1\rfloor}
\def\cl#1{\lceil#1\rceil}
\def\vv{\sqrt v}
\def\N{\mathbb{N}}
\def\ii{\mathbf i}
\def\po#1#2{(#1)_#2}

\begin{document}

\title[Determinant evaluations]
      {Determinant evaluations inspired by Di Francesco's determinant
for twenty-vertex configurations}

\author[C. Koutschan]{C. Koutschan}

\address{Johann Radon Institute for Computational and Applied Mathematics
  (RICAM),
Austrian Academy of Sciences,
Altenberger Stra\ss e 69,
A-4040 Linz.\newline
WWW: {\tt http://www.koutschan.de}.}

\author[C. Krattenthaler]{C. Krattenthaler}

\address{Fakult\"at f\"ur Mathematik, Universit\"at Wien,
Oskar-Morgenstern-Platz~1, A-1090 Vienna, Austria.
WWW: {\tt http://www.mat.univie.ac.at/\lower0.5ex\hbox{\~{}}kratt}.}

\author[M. J. Schlosser]{M. J. Schlosser}

\address{Fakult\"at f\"ur Mathematik, Universit\"at Wien,
Oskar-Morgenstern-Platz~1, A-1090 Vienna, Austria.
WWW: {\tt http://www.mat.univie.ac.at/\lower0.5ex\hbox{\~{}}schlosse}.}

\dedicatory{Dedicated to the memory of Marko Petkov\v sek,\\ who had a
keen interest in computer algebra and determinants}

\thanks{
The first two authors were
partially supported by the Austrian Science Fund FWF,
grant 10.55776/F50.
The first author was also partially supported by the Austrian Science Fund FWF,
grant 10.55776/I6130.
The third author was partially supported by the Austrian Science Fund FWF,
grant 10.55776/P32305.}

\subjclass[2020]{Primary 15A15;
Secondary 05A15, 05A19 05B45 82B20}

\keywords{Determinants, holonomic Ansatz, domino tilings, Aztec
  triangles, twenty vertex model}

\begin{abstract}
In his work on the twenty vertex model, Di Francesco [{\it
    Electron. J. Combin.} {\bf28}(4) (2021), Paper No.~4.38] found a
determinant formula for the number of configurations in a specific
such model, and he conjectured a closed form product formula for the
evaluation of this determinant. We prove this conjecture here.
Moreover, we actually generalize this determinant evaluation to a
one-parameter family of determinant evaluations, and we present
many more determinant evaluations of similar type --- some proved,
some left open as conjectures.
\end{abstract}

\maketitle

\section{Introduction}

In \cite{DiFranGui,DiFran21}, Di Francesco and Guitter undertook an
enumerative study of configurations in the twenty vertex model and
set it in relation to an analogous enumerative study of domino
tilings of certain regions. Particular such regions that
Di Francesco considered in \cite{DiFran21} were coined by him
``Aztec triangles". He found that certain twenty vertex configurations
were equinumerous with domino tilings of such ``Aztec triangles".
Moreover, he established a determinantal formula for these common
numbers, and he observed that this determinant had apparently an
evaluation given by a closed form
product.\footnote{There is a small subtlety that needs to be pointed
out here: our definition of binomial coefficients is not the same as
Di Francesco's. To be precise, his convention is to put
$\binom \al p  = 0$ for $-1 \le \al < p$.
Thus, according to this convention, 
all the entries in row~0 of the matrix of which the determinant is taken
in~\eqref{eq:DiFran} would equal~1, while, with our convention~\eqref{eq:bin},
they are all equal to~2. Consequently, our right-hand side
in~\eqref{eq:DiFran} has an additional factor~2 compared to
\cite[Eq.~(8.1)]{DiFran21}.}

\begin{conjecture}[{\sc Di Francesco \cite[Conj.~8.1 + Th.~8.2]{DiFran21}}]
\label{conj:DiFran}
For all positive integers $n$, we have
\begin{equation} \label{eq:DiFran} 
  \det_{0\leq i,j\le n-1} \left(2^i \binom{i+2j+1}{2j+1} - \binom{i-1}{2j+1}\right) =
  2\prod_{i=1}^n \frac{2^{i-1} \, (4i-2)!}{(n+2i-1)!},
\end{equation}
where the binomial coefficient is defined by
\begin{equation} \label{eq:bin} 
\binom \al p=\begin{cases} \frac {\al(\al-1)\cdots(\al-p+1)}
       {p!},&\text{if }p\ge0,\\
       0,&\text{if }p<0.
\end{cases}
\end{equation}
\end{conjecture}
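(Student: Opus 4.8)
The plan is to use Zeilberger's \emph{holonomic Ansatz}, which the paper's keywords already anticipate. Write $a(i,j) = 2^i\binom{i+2j+1}{2j+1} - \binom{i-1}{2j+1}$ and $A_n = \bigl(a(i,j)\bigr)_{0\le i,j\le n-1}$, and let $b_n$ denote the conjectured right-hand side of \eqref{eq:DiFran}. The decisive structural feature is that the entries do \emph{not} depend on $n$, so the top-left $(n-1)\times(n-1)$ block of $A_n$ is exactly $A_{n-1}$. Introducing the normalized bottom-row cofactors $c_n(j)$, which are characterized by the normalization $c_n(n-1)=1$ together with the orthogonality relations
$$\sum_{j=0}^{n-1} c_n(j)\, a(i,j) = 0,\qquad 0\le i\le n-2,$$
a Laplace expansion of $\det A_n$ along its last row, using that the $(n-1,n-1)$-minor equals $\det A_{n-1}$, yields the recursion
$$\det A_n = \det A_{n-1}\sum_{j=0}^{n-1} c_n(j)\, a(n-1,j).$$
An induction on $n$, with base case $\det A_1 = a(0,0) = 2 = b_1$, then reduces everything to producing an explicit sequence $c_n(j)$ satisfying the normalization and orthogonality relations and to verifying the single evaluation $\sum_{j=0}^{n-1} c_n(j)\, a(n-1,j) = b_n/b_{n-1}$; the inductively known nonvanishing $\det A_{n-1}=b_{n-1}\neq 0$ guarantees that the guessed $c_n(j)$ is indeed the cofactor vector.

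First I would \emph{guess} a closed form for $c_n(j)$: compute the cofactor ratios for small $n$ and fit a hypergeometric (or ratio-of-products) Ansatz in $j$ with $n$ as a parameter, using $c_n(n-1)=1$ to fix the overall constant. With a candidate in hand, the second step is to verify the orthogonality relations. Since $a(i,j)$ splits into the two pieces $2^i\binom{i+2j+1}{2j+1}$ and $\binom{i-1}{2j+1}$, I would treat $S(i,n):=\sum_{j} c_n(j)\, a(i,j)$ by creative telescoping (Zeilberger's algorithm), producing a recurrence in the parameters $i$ and $n$ satisfied by $S$, and then checking that the claimed value $0$ satisfies the same recurrence together with enough initial values on the triangular region $0\le i\le n-2$. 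The third step, evaluating the bottom-row sum $\sum_j c_n(j)\,a(n-1,j)$ and simplifying $b_n/b_{n-1}$ into closed product form, should then be a routine hypergeometric summation.

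The main obstacle is the second step: both discovering the explicit certificate $c_n(j)$ and, above all, rigorously establishing the orthogonality for the entire range $0\le i\le n-2$. The factor $2^i$ together with the two-term structure of $a(i,j)$ makes it likely that $c_n(j)$ is not a single hypergeometric term but a short linear combination of such (or satisfies a higher-order recurrence in $j$), which complicates the telescoping certificates and, in particular, the bookkeeping of boundary contributions at $j=0$ and $j=n-1$. I expect the verification to reduce, after creative telescoping, to a finite collection of lower-dimensional identities in $i$ and $n$; organizing these boundary terms correctly is where the real work lies. As a strategic aid — and consistent with the one-parameter family announced in the abstract — I would consider proving the more general parametrized determinant first, since the extra degree of freedom often yields cleaner recurrences and a more transparent certificate, from which \eqref{eq:DiFran} follows by specialization.
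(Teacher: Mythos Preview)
Your plan is exactly the paper's approach: Zeilberger's holonomic Ansatz, splitting the sum for (H2) and (H3) into the two binomial pieces and applying creative telescoping. Two adjustments to your expectations are worth noting. First, no closed hypergeometric form for $c_n(j)$ is found (or needed); instead one guesses bivariate \emph{recurrences} for $c_{n,j}$ from numerical data and then proves those recurrences correct by verifying that the sequence they define satisfies (H1) and (H2) --- the resulting annihilator has holonomic rank~$3$ and the operators are already too large to print. Second, your strategic suggestion of proving the one-parameter family first turns out to be counterproductive here: the paper reports that the holonomic Ansatz on the $x$-version (Conjecture~\ref{conj:CK}) exceeds current computational capacity, and that identity is proved by an entirely different, non-algorithmic route.
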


This caught the attention of the second author. Since he prefers
parameters in determinants in order to facilitate their evaluation,
he searched for a parametric generalization of \eqref{eq:DiFran}.
He successfully found such a generalization, and in addition a
companion identity.

\begin{conjecture}[CK$_2$] \label{conj:CK}
For all positive integers $n$, we have
\begin{multline} \label{eq:CK1}
\det_{0\le i,j\le n-1}\left(2^i\binom {x+i+2j+1}{2j+1}+\binom {x-i+2j+1}{2j+1}\right)
\\
=
2^{\binom n2+1}\prod _{i=0} ^{n-1}\frac {i!} {(2i+1)!}
\prod _{i=0} ^{\fl{n/2}}(x+4i+1)_{n-2i}
\prod _{i=0} ^{\fl{(n-1)/2}}(x-2i+3n)_{n-2i-1},
\end{multline}
and
\begin{multline} \label{eq:CK2}
\det_{0\le i,j\le n-1}\left(2^i\binom {x+i+2j}{2j}+\binom {x-i+2j}{2j}\right)
\\
=
2^{\binom n2+1}\prod _{i=0} ^{n-1}\frac {i!} {(2i)!}
\prod _{i=0} ^{\fl{(n-1)/2}}(x+4i+3)_{n-2i-1}
\prod _{i=0} ^{\fl{(n-2)/2}}(x-2i+3n-1)_{n-2i-2},
\end{multline}
with the Pochhammer symbol $(\al)_p$ defined by
$\al(\al+1)\cdots(\al+p-1)$ for $p\ge1$ and $(\al)_0:=1$.
\end{conjecture}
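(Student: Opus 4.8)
The plan is to prove both evaluations by the \emph{holonomic Ansatz} for determinants, in the form due to Zeilberger and adapted for computer-algebra certification. I will describe the argument for \eqref{eq:CK1}; the proof of \eqref{eq:CK2} runs along identical lines with the obvious change $2j+1\to 2j$ in the entries and in the product formula. Write $M_n=\left(a_{i,j}\right)_{0\le i,j\le n-1}$ with $a_{i,j}=2^i\binom{x+i+2j+1}{2j+1}+\binom{x-i+2j+1}{2j+1}$, and let $b_n$ denote the right-hand side of \eqref{eq:CK1}. The central device is to introduce, for each $n$, the normalized cofactors of the bottom row: let $c_{n,j}$ (for $0\le j\le n-1$) be the unique solution of the linear system
\begin{equation*}
c_{n,n-1}=1, \qquad \sum_{j=0}^{n-1} c_{n,j}\,a_{i,j}=0 \quad (0\le i\le n-2).
\end{equation*}
By Cramer's rule the $c_{n,j}$ are the entries of the last row of $M_n^{-1}$, rescaled so that the last equals $1$ (this solution is well defined over $\mathbb{Q}(x)$ once $\det M_{n-1}\neq 0$). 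Expanding $\det M_n$ along its bottom row then yields the telescoping relation
\begin{equation*}
\det M_n = \left(\sum_{j=0}^{n-1} c_{n,j}\,a_{n-1,j}\right)\det M_{n-1}.
\end{equation*}

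Consequently, once a candidate for $c_{n,j}$ is available, the evaluation \eqref{eq:CK1} reduces to three verifications together with the base case $n=1$: (i) the normalization $c_{n,n-1}=1$; (ii) the orthogonality relations $\sum_{j} c_{n,j}\,a_{i,j}=0$ for $0\le i\le n-2$; and (iii) the ratio identity $\sum_{j} c_{n,j}\,a_{n-1,j}=b_n/b_{n-1}$. The first is a single hypergeometric evaluation at $j=n-1$, while (ii) and (iii) are summation identities over $j$.

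First I would compute $c_{n,j}$ for many values of $n$ as rational functions of $x$, and then guess a holonomic description for them: a system of linear recurrences in the variables $n$ and $j$, with polynomial coefficients in $n$, $j$, and $x$, obtained by an ansatz with undetermined coefficients and linear algebra over the computed data. Since each entry $a_{i,j}$ is a sum of two proper hypergeometric terms, it is holonomic in $(i,j)$, and hence the summands $c_{n,j}\,a_{i,j}$ in (ii) and (iii) are holonomic; the sums over $j$ can therefore be attacked by creative telescoping (Zeilberger's algorithm and its multivariate analogue). For each sum one produces a recurrence satisfied by the sum together with a certificate exhibiting the telescoped difference, and then checks that the conjectured value satisfies the same recurrence and enough initial conditions. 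A separate step is to prove that the guessed system, supplemented by finitely many initial values, indeed determines $c_{n,j}$ uniquely, so that the guessed object coincides with the true normalized cofactors.

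The main obstacle I expect is twofold. The conceptual difficulty is \emph{guessing} the $c_{n,j}$: unlike the entries, the normalized cofactors are unlikely to be single hypergeometric terms, so one must determine the order and shape of their defining recurrences. The computational difficulty lies in the creative-telescoping certification of (ii) and (iii): because $a_{i,j}$ splits into two pieces with different $i$-dependence (the factor $2^i$ versus the reflected binomial), the telescoping must track both contributions, the certificates and resulting recurrences may be of high order, and the boundary terms at $j=0$ and $j=n-1$ must be evaluated with care. Once these recurrences and certificates are in hand, checking that $b_n/b_{n-1}$ and the constant $1$ satisfy them reduces to routine (if lengthy) polynomial identities in $n$ and $x$, and matching finitely many initial conditions completes the argument; the companion identity \eqref{eq:CK2} is settled by the same scheme. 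As a consistency check one notes that specializing \eqref{eq:CK1} at $x=0$ recovers the entries of \eqref{eq:DiFran}, so this generalization also subsumes Di Francesco's conjecture.
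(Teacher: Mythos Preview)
Your plan is exactly the holonomic Ansatz, and the paper's authors report having attempted precisely this for Conjecture~\ref{conj:CK} and failed: while recurrences for the normalized cofactors $c_{n,j}$ can be guessed, the creative-telescoping certification of your step~(ii)---the orthogonality identities $\sum_j c_{n,j}a_{i,j}=0$---exceeds the capacity of current computers once the free parameter~$x$ is carried along. (See the opening of Section~\ref{sec:conj2} and the dashes in the \eqref{H2} rows of Table~\ref{tab:comp}.) So your proposal is not wrong in principle, but it has a concrete gap at exactly the point you yourself flagged as the main obstacle: the certificates cannot in fact be produced, and the argument stalls there.

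The paper instead takes a non-algorithmic route. It shows (Lemmas~\ref{lem:CK1} and~\ref{lem:CK2}) that $D_2(k;2y)$ and $D_3(k;2y)$ equal, up to a factor~$2$, determinants of Delannoy numbers $\det\bigl(D(2j-i,\,i+n-k-1)\bigr)$ whose closed-form evaluation had already been established in~\cite{CoHuKr23}. The link is made by encoding the matrix entries as coefficients in a bivariate generating series $a(u,v)$ and then applying determinant-preserving transformations: multiplying $a(u,v)$ by power series in a single variable with constant term~$1$, and substituting $u\mapsto u\cdot(\text{series with constant term }1)$, each of which amounts to multiplying the matrix by a unitriangular matrix. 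After a $2$-section in~$v$ (to pick out the odd or even binomial indices) and these manipulations, the generating function matches that of the Delannoy matrix. This bypasses the computational explosion caused by~$x$ entirely, at the cost of relying on the external evaluation~\eqref{eq:D1}.
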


Indeed, the special case $x=0$ of \eqref{eq:CK1} is equivalent
with~\eqref{eq:DiFran}. However, this did not help:
neither was the second author able to
prove Conjecture~\ref{conj:DiFran} nor was he able to prove
Conjecture~\ref{conj:CK} at the time.

In an unrelated development,
during the 9th International Conference on ``Lattice Path
Combinatorics and Applications" that took place June 21--25, 2021
at the CIRM in Luminy,
the first author\footnote{who did not participate in the conference;
the second and third author did (via {\tt zoom}).}
received the following e-mail from Doron Zeilberger.
More precisely, the e-mail arrived on June 23 at 13:49 (European time) and
had the subject ``challenge'':

\medskip
\begin{quote}
\tt  Dear Christoph,
  \medskip

  \noindent\raggedright
  Philippe Di Francesco just gave a great talk at the Lattice path conference
  mentioning, inter alia, a certain conjectured determinant. It is\\
  
  Conj.~8.1 (combined with Th.~8.2) in
  
  \url{https://arxiv.org/pdf/2102.02920.pdf}
  \medskip

  \noindent
  I am curious if you can prove it by the Koutschan-Zeilberger-Aek holonomic
  ansatz method.
  If you can do it before Friday, June 25, 2021, 17:00 Paris time, I will
  mention it in my talk in that conference.
  \medskip

  \noindent
  Best wishes\\
  \hspace*{50pt} Doron
\end{quote}

\medskip
\noindent
It turned out that the conjectured determinant evaluation in question ---
namely~\eqref{eq:DiFran} ---
was indeed routinely provable by the so-called
{\it holonomic Ansatz}. Consequently, the challenge was met and the result was
announced by Zeilberger in his talk on the last day of the
conference. 

Obviously, the second author immediately contacted the first. However,
the determinant evaluations in Conjecture~\ref{conj:CK} could not be
proved by the holonomic Ansatz.

On the other hand, the third author initiated a ``hunt" for further
determinant evaluations of similar kind. As a result, together we came
up with many variations of the determinant identities in
Conjectures~\ref{conj:DiFran} and~\ref{conj:CK}, some of which could
be proved by the holonomic Ansatz while others resisted to this method.
On the other hand, at least in one case a different --- non-algorithmic
--- method led to success.

Again in an unrelated development, at the Workshop on ``Enumerative
Combinatorics" in Oberwolfach in December 2022, Sylvie Corteel
asked the second author how to 
enumerate certain tableaux that were a mixture of
symplectic and supersymmetric tableaux; she and her student
Frederick Huang had
observed that their number seemed to be given by a nice product
formula. The second author asked for a bit of time, and answered
on the next day: ``Calculer un \hbox{d\'eterminant~!"} That was actually not
new for Corteel and Huang, they already knew that~\dots\ In any case,
a month later this determinant (and a related one) was indeed
evaluated; see~\cite{CoHuKr23}.\footnote{As it turned out, the tableau
enumeration problem was not so unrelated: the actual goal of Corteel
and Huang was to count the number of domino tilings of regions that
generalized Di Francesco's Aztec triangles; the ``super-symplectic"
tableaux were in bijection with these domino tilings.}
The second author noticed that, up to
a simple parameter transformation, the {\it result\/} of that determinant
evaluation seemed to be the same as the right-hand side of~\eqref{eq:CK1}
(and the {\it result\/} of the related determinant evaluation seemed to be the
same as the right-hand side of~\eqref{eq:CK2}). It did not take
for long to rigorously relate this determinant to the one on the
left-hand side of~\eqref{eq:CK1} (and the related determinant to the
one on the left-hand side of~\eqref{eq:CK2}). Thus, also
Conjecture~\ref{conj:CK} became a theorem.

\medskip
The purpose of this paper is to collect all these results and conjectures,
together with our proofs (in case we found one). More precisely, in the next
section we review Zeilberger's holonomic Ansatz.
Then follows a ``warmup" section, in which we prove a variation
of~\eqref{eq:CK1} in which the terms $2j$ in the binomial coefficients
get ``replaced" by~$j$ and the
power~$2^i$ is replaced by an arbitrary power~$a^i$. We actually
provide two proofs: one using the holonomic Ansatz, the other using
constant term calculus. Here, in this simple case, we are able to
display the results of the intermediate calculations when using the
holonomic Ansatz, while, due to
their size, this is not possible anymore for the
subsequent applications of the holonomic Ansatz in this
paper.\footnote{Instead, we provide details of these calculations
in the accompanying electronic material~\cite{EM}.}
In this sense, this proof also serves pedagogical purposes.

Section~\ref{sec:conj1} is devoted to the
earlier mentioned computer proof of Conjecture~\ref{conj:DiFran}
due to the first author that was announced by Zeilberger during
the 9th Lattice Path Conference. The proof of Conjecture~\ref{conj:CK}
is the subject of Section~\ref{sec:conj2}. As indicated above, the
idea of the proof is to relate the two determinants in~\eqref{eq:CK1}
and~\eqref{eq:CK2} to two determinants that had been evaluated
in~\cite{CoHuKr23}.

The subsequent sections discuss variations of
these determinant evaluations. We begin in Section~\ref{sec:var1} with
determinants of the kind as in Conjecture~\ref{conj:CK} where we allow
more general shifts at several places; see the definition of
$D_{\alpha,\beta,\gamma,\delta}(n)$ at the beginning of the section.
A computer search led to the discovery of many (more) corresponding
determinant evaluations; see Theorem~\ref{thm:det22}.

In Section~\ref{sec:var2}, we consider variations of the determinants in
Conjecture~\ref{conj:CK} in which $2j$ gets ``replaced" by~$3j$ and
the power~$2^i$ is replaced by~$3^i$, again allowing more general
shifts. Also in this case, we found many
corresponding determinant evaluations; see Theorem~\ref{thm:det33}.
Moreover, it seems that there are three one-parameter families of
closed-form determinant evaluations of this type; see
Conjecture~\ref{conj:det33x}.

Section~\ref{sec:var3} is dedicated to
variations of the determinants in
Conjecture~\ref{conj:CK} in which the power~$2^i$ gets ``replaced''
by~$4^i$, with the $2j$ in the binomial coefficients
being retained, again allowing shifts.
The ``sporadic" determinant evaluations that we found are listed
(and proved) in Theorem~\ref{thm:det24}. There is also a
one-parameter family of such evaluations; see Theorem~\ref{thm:detx41}.
Moreover, we discovered a second one-parameter family. However, in
that case the result of the determinant evaluation does not factor
completely. Our result in Theorem~\ref{thm:MS1} identifies all factors
but one, apparently, irreducible factor. While we failed to find an
explicit formula for that factor, we found a recurrence that it
seems to satisfy; see Conjecture~\ref{conj:MS1}. Since our
(non-algorithmic) proof of Theorem~\ref{thm:MS1} is somewhat lengthy,
it is given separately in Section~\ref{sec:proof}. 

Section~\ref{sec:var5} contains yet further
variations of the determinants in Conjecture~\ref{conj:CK}: here, the
power~$2^i$ remains untouched, but $2j$ gets ``replaced" by~$4j$,
and we allow more general shifts. We present our corresponding
findings in Conjecture~\ref{conj:4j}, Proposition~\ref{prop:4j}, and
Conjecture~\ref{conj:det42}. Again, we are confident that the holonomic
Ansatz is able to prove all these results. However, at this point in time
the capacity of the available computers is not sufficient to actually
carry out the necessary computations.

In the final section,
Section~\ref{sec:open}, we list several problems
left open or posed by this work.

\section{The Holonomic Ansatz}
\label{sec:holonom}

The \emph{holonomic Ansatz}~\cite{Zeilberger07} is a computer-algebra-based
approach to find and/or prove the evaluation of a symbolic determinant
$\det(A_n)$, where the dimension of the square matrix $A_n:=(a_{i,j})_{0\leq
  i,j<n}$ is given by a symbolic parameter~$n$. The method is only applicable
to non-singular matrices whose entries~$a_{i,j}$ are holonomic sequences (see
below) in the index variables~$i$ and~$j$. Moreover, the entries~$a_{i,j}$
must not depend on~$n$, i.e., $A_{n-1}$ is an upper-left submatrix of~$A_n$.

The holonomic Ansatz works as follows: define the quantity
\begin{equation}\label{cnj}
  c_{n,j} := (-1)^{n-1+j} \frac{M_{n-1,j}}{M_{n-1,n-1}}
\end{equation}
where $M_{i,j}$ denotes the $(i,j)$-minor of the matrix~$A_n$ (where the
indexing starts at~$0$). In other words, $c_{n,j}$ is the $(n-1,j)$ cofactor
of $A_n$ divided by $\det(A_{n-1})$. Using Laplace expansion with respect to
the last row, one can write
\begin{equation}\label{H3}\tag{H3}
  \sum_{j=0}^{n-1} a_{n-1,j} c_{n,j} = \frac{\det(A_n)}{\det(A_{n-1})}.
\end{equation}
Under the assumptions that (i) the bivariate sequence $c_{n,j}$ is holonomic
and that (ii) its holonomic definition is known, the symbolic sum on the
left-hand side of~\eqref{H3} can be tackled with creative
telescoping~\cite{Zeilberger91,PetkovsekWilfZeilberger96},
yielding a linear recurrence in~$n$ for the
sum. If a conjectured evaluation~$b_n$ for the determinant of~$A_n$ is
available, then one can prove it by verifying that $b_n/b_{n-1}$ satisfies the
obtained recurrence and by comparing a sufficient number of initial values. If
in contrast such a conjecture has not been formulated, then one may succeed to
find (and at the same time: prove) an evaluation of $\det(A_n)$ by solving the
recurrence, thus obtaining an expression for $\det(A_n)/\det(A_{n-1})$, and by
taking the product.

What can be said about the two assumptions? There is no general theorem that
implies that $c_{n,j}$ is always holonomic, and in fact, there are many
examples where it is not. If (i) is not satisfied, i.e., if $c_{n,j}$ is not
holonomic, then the method fails (not necessarily; in some situations one may
succeed to overcome the problem by applying a mild
reformulation; see~\cite{KoutschanNeumuellerRadu16}). Concerning~(ii): by a holonomic
definition we mean a set of linear recurrence equations whose coefficients
are polynomials in the sequence indices~$n$ and~$j$, together with finitely
many initial values, such that the entire bivariate sequence
$(c_{n,j})_{1\leq n,\,0\leq j<n}$ can be produced by unrolling the recurrences
and by using the initial values. The question now is how the original
definition~\eqref{cnj} can be converted into a holonomic definition.

Clearly, \eqref{cnj} allows one to compute the values of $c_{n,j}$ for
concrete integers~$n$ and~$j$ in a certain, finite range. From these data,
candidate recurrences can be constructed by the method of guessing (i.e.,
employing an Ansatz with undetermined coefficients; cf.~\cite{Kauers09}). It
remains to prove that these recurrences, constructed from finite, and
therefore incomplete data, are correct, i.e., are valid for all $n\geq1$ and
$0\leq j<n$. For this purpose, we show that $c_{n,j}$ is the unique solution
of a certain system of linear equations, and then we prove that the sequence defined by
the guessed recurrences (and appropriate initial conditions) also satisfies the
same system. By uniqueness, it follows that the two sequences agree,
i.e., that the guessed recurrences define the desired sequence~$c_{n,j}$.

Suppose that the last row of $A_n$ is replaced by its $i$-th row;
the resulting matrix is clearly singular, turning~\eqref{H3} into
\begin{equation}\label{H2}\tag{H2}
  \sum_{j=0}^{n-1} a_{i,j} c_{n,j} = 0\qquad (0\leq i<n-1).
\end{equation}
For each $n\in\N$ the above equation~\eqref{H2} represents a system
of $n-1$ linear equations in the $n$ ``unknowns'' $c_{n,0},\dots,c_{n,n-1}$,
whose coefficient
matrix $(a_{i,j})_{0\leq i<n-1,0\leq j<n}$ has full rank because
$\det(A_{n-1})\neq0$ (if the latter is not known a priori, it can be argued by
induction on~$n$). Hence the homogeneous system~\eqref{H2} has a
one-dimensional kernel. The solution is made unique by normalizing with respect to its
last component, that is, by imposing a condition that is obvious
from~\eqref{cnj}, namely
\begin{equation}\label{H1}\tag{H1}
  c_{n,n-1} = 1.
\end{equation}
Hence, \eqref{H1} and~\eqref{H2} together define $c_{n,j}$ uniquely. On the
other hand, given a holonomic definition of $c_{n,j}$, creative telescoping
and holonomic closure properties can be applied to prove~\eqref{H1}
and~\eqref{H2}, respectively. If these proofs succeed, then it follows that
the guessed recurrences are correct.

\medskip
The holonomic Ansatz has already been applied in many different contexts
\cite{KoutschanKauersZeilberger11, KoutschanThanatipanonda19,
  DuKoutschanThanatipanondaWong22}. Variations of it have been described in
\cite{IshikawaKoutschan12, KoutschanThanatipanonda13,
  KoutschanNeumuellerRadu16}.

\medskip
We conclude this introduction to the holonomic Ansatz with some remarks
concerning its concrete implementation. For computing the data, i.e., the
values of~$c_{n,j}$, it is usually more efficient to employ their definition
via~\eqref{H1} and~\eqref{H2}, rather than computing determinants in the
spirit of~\eqref{cnj}. We used the {\sl Mathematica} packages
\texttt{Guess.m}~\cite{Kauers09} for the guessing of the recurrences, and
\texttt{HolonomicFunctions.m}~\cite{Koutschan10b} for the creative-telescoping
proofs.

\section{A warmup exercise}
\label{sec:warmup}

Before we dedicate ourselves to the proofs of the conjectured
determinant evaluations of the introduction, we begin with a variation
of the determinants appearing in Conjecture~\ref{conj:CK}. The
variation consists in ``replacing" $2j$ in the binomial coefficients
by~$j$ and the power~$2^i$ by
$a^i$ where $a$ is an indeterminate. It turns out that a proof of
the evaluation
of this latter determinant is much simpler. We provide actually two proofs:
one using the holonomic Ansatz, and the other using constant term
calculus. (If one wishes: a computer proof and a computer-free
proof.) We will use this determinant evaluation later in the proof of
Theorem~\ref{thm:MS1} in Section~\ref{sec:proof}.

\begin{theorem} \label{thm:einfach}
For all non-negative integers $n$, we have
$$
\det_{0\le i,j\le n-1}\left(a^i\binom {x+i+j-1}j+\binom {x-i+j-1}j\right)
=2(a-1)^{\binom n2}.
$$
\end{theorem}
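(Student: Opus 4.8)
The plan is to give a constant term (generating function) proof. Write $a_{i,j}$ for the $(i,j)$ entry of the matrix. The starting point is that each entry is a coefficient in an elementary power series: from $\sum_{j\ge0}\binom{m+j-1}{j}t^j=(1-t)^{-m}$ one reads off
\[
  f_i(t):=a^i(1-t)^{-(x+i)}+(1-t)^{-(x-i)},\qquad a_{i,j}=[t^j]\,f_i(t).
\]
Multilinearity of the determinant in its columns, combined with the linearity of coefficient extraction, then turns the determinant into a single coefficient extraction in $n$ independent variables,
\[
  \det_{0\le i,j\le n-1}(a_{i,j})
   =\bigl[t_0^0t_1^1\cdots t_{n-1}^{n-1}\bigr]\,
    \det_{0\le i,j\le n-1}\bigl(f_i(t_j)\bigr),
\]
since extracting $[t_j^{\,j}]$ from column $j$ of $\det(f_i(t_j))$ reproduces column $j$ of $(a_{i,j})$.

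Next I would factor the inner determinant. Setting $u_j:=(1-t_j)^{-1}$ gives $f_i(t_j)=u_j^{\,x}\bigl((au_j)^i+u_j^{-i}\bigr)$, so a factor $\prod_{j}u_j^{\,x}$ pulls out columnwise and leaves $\det_{i,j}\bigl((au_j)^i+u_j^{-i}\bigr)$. Since $au_j$ and $u_j^{-1}$ are the two roots of $X^2-\eta_jX+a$ with $\eta_j:=au_j+u_j^{-1}$, the entry $(au_j)^i+u_j^{-i}$ is their $i$-th power sum, hence a \emph{monic} polynomial of degree $i$ in $\eta_j$ (and equals $2$ for $i=0$). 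Row reduction therefore collapses the inner determinant to a Vandermonde,
\[
  \det_{0\le i,j\le n-1}\bigl((au_j)^i+u_j^{-i}\bigr)
   =2\prod_{0\le p<q\le n-1}(\eta_q-\eta_p),
\]
and a short computation yields $\eta_q-\eta_p=(u_q-u_p)(au_pu_q-1)/(u_pu_q)$.

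Re-expressing everything through $u_q-u_p=(t_q-t_p)/((1-t_p)(1-t_q))$ and $au_pu_q-1=\bigl((a-1)+t_p+t_q-t_pt_q\bigr)/((1-t_p)(1-t_q))$, and keeping track of the powers of $(1-t_j)$ (the exponents combine to $-(x+n-1)$), reduces the theorem to the single identity
\[
  \bigl[t_0^0\cdots t_{n-1}^{n-1}\bigr]
   \Bigl(\prod_{j}(1-t_j)^{-(x+n-1)}
         \prod_{p<q}(t_q-t_p)
         \prod_{p<q}\bigl((a-1)+t_p+t_q-t_pt_q\bigr)\Bigr)
   =(a-1)^{\binom n2}.
\]
The finishing move is a homogeneity argument: the Vandermonde $\prod_{p<q}(t_q-t_p)$ is homogeneous of total degree $\binom n2$, which is exactly the degree $0+1+\cdots+(n-1)$ of the monomial $t_0^0\cdots t_{n-1}^{n-1}$ being extracted. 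Hence only the constant term of the remaining power series can contribute; that constant term is $\prod_{p<q}(a-1)=(a-1)^{\binom n2}$, while the coefficient of $t_0^0\cdots t_{n-1}^{n-1}$ in the Vandermonde is the sign of the identity permutation, namely $1$. Restoring the overall factor $2$ gives $2(a-1)^{\binom n2}$.

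The step needing the most care is the middle one: recognizing $(au_j)^i+u_j^{-i}$ as a monic degree-$i$ polynomial in the single symmetric variable $\eta_j=au_j+u_j^{-1}$ so that the inner determinant truly reduces to a Vandermonde, and then bookkeeping the powers of $(1-t_j)$ so that the integrand factors cleanly as a Vandermonde times a power series whose constant term is $(a-1)^{\binom n2}$. Once this is in place, the homogeneity argument explains at a stroke why the answer is independent of~$x$: the entire $x$-dependence sits inside the power series, which only ever contributes its constant term. As an alternative, the evaluation should also yield to Zeilberger's holonomic Ansatz of Section~\ref{sec:holonom}: one guesses and certifies, via \eqref{H1} and \eqref{H2}, a recurrence for the cofactor sequence $c_{n,j}$, and checks that the ratio in \eqref{H3} equals $(a-1)^{n-1}=2(a-1)^{\binom n2}/\bigl(2(a-1)^{\binom{n-1}2}\bigr)$, together with a base case.
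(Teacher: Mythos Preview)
Your proof is correct and follows the same constant-term philosophy as the paper's second proof, but with a tidier execution. The paper writes each entry as $\CT_z(1+z)^Nz^{-k}$, evaluates the inner determinant $\det\bigl(a^{i/2}(1+z_j)^{i}+a^{-i/2}(1+z_j)^{-i}\bigr)$ by quoting \cite[Eq.~(2.5)]{KratBN}, and then \emph{symmetrizes} over permutations of the $z$-variables to manufacture a second Vandermonde factor; the homogeneity argument is then applied to the squared Vandermonde $\prod(z_i-z_j)^2$. You instead extract $[t_j^{\,j}]$ directly, observe that $(au_j)^i+u_j^{-i}$ is a Newton power sum and hence a monic polynomial of degree~$i$ in $\eta_j=au_j+u_j^{-1}$, and reduce the inner determinant to a single Vandermonde $2\prod_{p<q}(\eta_q-\eta_p)$ by elementary row operations. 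This dispenses with both the external citation and the symmetrization step; the final homogeneity argument (the Vandermonde has exactly the total degree being extracted, so only the constant term of the remaining power series contributes) is the same in both proofs. The holonomic-Ansatz alternative you mention at the end is exactly the paper's first proof.
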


\begin{proof}[First proof]
We compute the \texttt{data} for $c_{n,j}$, as defined in \eqref{cnj}, for $1\leq n\leq 11$:
  \begin{alignat}{3}
    c_{1,0} &= 1, \label{eq:det1a_c1}
    \\
    c_{2,0} &= -x, &
    c_{2,1} &= 1,  \label{eq:det1a_c2}
    \\
    c_{3,0} &= \frac{a x^2+a x-x^2+x}{2 (a-1)}, &
    c_{3,1} &= \frac{-ax-a+x}{a-1}, &
    c_{3,2} &= 1, \\[-1ex]
    \vdots\notag \\[-1ex]
    c_{11,0} &= \frac{362880 x + \dots +a^9 x^{10}}{3628800 (a-1)^9},\ \dots,\ &
    c_{11,9} &= \frac{-ax-9 a+x}{a-1},\quad &
    c_{11,10} &= 1.\notag
  \end{alignat}
  Then we use the \texttt{Guess.m} package~\cite{Kauers09} to find plausible
  candidates for bivariate recurrences that $c_{n,j}$ may satisfy:
  \begin{center}
    \verb|g = GuessMultRE[data, {c[n,j], c[n,j+1], c[n+1,j], c[n+1,j+1]},| \\
    \verb|  {n, j}, 2, StartPoint -> {1, 0}, Constraints -> (j < n)];|
  \end{center}
  In order to have a canonical set of generators for the infinite set of
  such recurrences, which is a left ideal in the corresponding operator algebra,
  also called the \emph{annihilator} of the sequence $c_{n,j}$, we compute a
  (left) Gr\"obner basis \texttt{annc} of the previous output:
  \begin{center}
    \verb|OreGroebnerBasis[NormalizeCoefficients /@ ToOrePolynomial[g, c[n,j]]];|
  \end{center}
  As a result, we obtain the following two recurrences, which, in contrast to the
  recurrences in later sections, are small enough to be displayed here, albeit
  too unhandy to process them by pencil and paper:
\begin{align*}
& (1-a) n (j-n) \bigl(a j^2+2 a j x+a j+a x^2+a x-x^2-x\bigr) c_{n+1,j} \\
&  -(j-n+x+2) \bigl(a j^3-2 a j^2 n+2 a j^2 x+a j^2-4 a j n x\\
&\kern6cm
  +a j x^2+a j x-2 a n x^2 
     -j x^2 +j x+2 n x^2\bigr) c_{n,j+1} \\
   & +\bigl(a^2 j^2 n^2+a^2 j^2 n x-a^2 j^2 n+2 a^2 j n^2 x+a^2 j n^2+2 a^2 j n x^2-a^2 j n x
     -a^2 j n \\
   &\quad +a^2 n^2 x^2+a^2 n^2 x+a^2 n x^3-a^2 n x+a j^4-4 a j^3 n+2 a j^3 x+4 a j^3+3 a j^2 n^2 \\
   &\quad -9 a j^2 n x-9 a j^2 n+a j^2 x^2+5 a j^2 x+5 a j^2+6 a j n^2 x+3 a j n^2-6 a j n x^2 \\
   &\quad -11 a j n x-5 a j n+a j x^2+3 a j x+2 a j+2 a n^2 x^2-2 a n x^3-4 a n x^2-j^2 x^2 \\
   &\quad +j^2 x +4 j n x^2-j x^2+j x-3 n^2 x^2-n^2 x+n x^3 +4 n x^2+n x\bigr)c_{n,j} = 0,
   \\[1ex]
   & \bigl(a j^2+2 a j x+a j+a x^2+a x-x^2-x\bigr) (j-n+x+3) c_{n,j+2} \\
   & +\bigl(a^2 j^3+3 a^2 j^2 x+3 a^2 j^2+3 a^2 j x^2+6 a^2 j x+2 a^2 j+a^2 x^3
     +3 a^2 x^2+2 a^2 x-2 a j^3 \\
   &\quad +2 a j^2 n-5 a j^2 x-8 a j^2+4 a j n x+4 a j n-5 a j x^2
     -14 a j x-8 a j+2 a n x^2+2 a n x \\
   &\quad -2 a x^3-8 a x^2-6 a x+2 j x^2+2 j x-2 n x^2-2 n x+x^3+5 x^2 +4 x\bigr) c_{n,j+1} \\
   & -(a-1) (j-n+1) \bigl(a j^2+2 a j x+3 a j+a x^2+3 a x+2 a-x^2-x\bigr) c_{n,j} = 0.
  \end{align*}
  Next, we have to prove the identities \eqref{H1} and \eqref{H2}, in order to justify that
  $c_{n,j}$, as defined in \eqref{cnj}, agrees with the unique solution of the above
  recurrences, or in other words, that these guessed recurrences are correct.
  The command
  \begin{center}
    \verb|DFiniteSubstitute[annc, {j -> n-1}]|
  \end{center}
  delivers the following, second-order recurrence for $c_{n,n-1}$:
  \begin{align*}
    & (n+1)  \bigl(a n^2+2 a n x-a n+a x^2-a x-x^2+x\bigr) c_{n+2,n+1} \\
    & +\bigl(a^2 n^3+3 a^2 n^2 x+3 a^2 n x^2-a^2 n+a^2 x^3-a^2 x-2 a n^3-5 a n^2 x-5 a n x^2\\
    &\qquad +2 a n-2 a x^3+a x^2+a x+2 n x^2-2 n x+x^3-x^2\bigr) c_{n+1,n} \\
    & -(a-1) (n+x-1) \bigl(a n^2+2 a n x+a n+a x^2+a x-x^2+x\bigr) c_{n,n-1} =0.
   \end{align*}
  It is easy to check that the constant solution $c_{n,n-1}=1$ is a solution
  to the above recurrence, which, together with the initial conditions
  from~\eqref{eq:det1a_c1} and~\eqref{eq:det1a_c2}, implies~\eqref{H1}.

  In order to prove~\eqref{H2}, we view $c_{n,j}$ as a trivariate sequence in $n,i,j$,
  and compute the annihilator of $\binom{x-i+j-1}{j}\cdot c_{n,j}$ via closure properties:
  \begin{center}
    \verb|s1 = DFiniteTimes[Annihilator[Binomial[x-i+j-1,j], {S[n], S[j], S[i]}],| \\
    \verb|OreGroebnerBasis[Append[annc, S[i]-1], OreAlgebra[S[n], S[j], S[i]]]];|
  \end{center}
  Since we have a recursive definition of the summand, we can employ creative telescoping
  to find a set of recurrences that is satisfied by the sum
  $s^{(1)}_{n,i} = \sum_{j=0}^{n-1} \binom{x-i+j-1}{j}\cdot c_{n,j}$
  \begin{center}
    \verb|ct1 = FindCreativeTelescoping[s1, S[j]-1];|
  \end{center}
  and similarly for the other sum $s^{(2)}_{n,i}=\sum_{j=0}^{n-1} a^i \binom{x+i+j-1}{j}\cdot c_{n,j}$.
  Combining the two results via the command
  \begin{center}
    \verb|DFinitePlus[ct1[[1]], ct2[[1]]];|
  \end{center}
  yields recurrences for the sum $s_{n,i}=s^{(1)}_{n,i}+s^{(2)}_{n,i}$ on the left-hand side of~\eqref{H2}:
  \begin{align*}
    & (a-1) (i+1) n s_{n+1,i}-2 i (i-n+2) s_{n,i+1} + (a+1) (i+1) (i+n-1) s_{n,i} = 0,
    \\[1ex]
    & 2 i (i+1) (i-n+4) \bigl(a i^2+a i-2 a-x^2-x\bigr) s_{n,i+3} \\
    & -i \bigl(2 a^2 i^4+10 a^2 i^3+10 a^2 i^2-10 a^2 i-12 a^2+a i^5
      -a i^4 n-a i^4 x+8 a i^4+a i^3 n x -3 a i^3 n \\
    &\quad -5 a i^3 x+18 a i^3+2 a i^2 n x+a i^2 n-2 a i^2 x^2
      -5 a i^2 x+4 a i^2-3 a i n x+3 a i n-8 a i x^2 \\
    &\quad +a i x-19 a i-6 a x^2-6 a x-12 a-i^3 x^2-i^3 x+i^2 n x^2
      +i^2 n x+i^2 x^3-6 i^2 x^2-7 i^2 x \\
    &\quad -i n x^3+i n x^2+2 i n x+4 i x^3-11 i x^2-15 i x-n x^3+n x+3 x^3-6 x^2-9 x\bigr)
      s_{n,i+2} \\
    & + i \bigl(a^2 i^5-a^2 i^4 x+7 a^2 i^4+2 a^2 i^3 n-4 a^2 i^3 x+13 a^2 i^3
      +8 a^2 i^2 n-a^2 i^2 x-3 a^2 i^2+2 a^2 i n \\
    &\quad +6 a^2 i x-18 a^2 i-12 a^2 n+a i^5\!-a i^4 x+5 a i^4-a i^3 x^2-5 a i^3 x
      +5 a i^3+a i^2 x^3-5 a i^2 x^2 \\
    &\quad -7 a i^2 x-5 a i^2-2 a i n x^2-2 a i n x+3 a i x^3-8 a i x^2-5 a i x
      -6 a i-6 a n x^2+2 a x^3\!-2x \\
    &\quad -6 a n x -2 a x-i^3 x^2-i^3 x+i^2 x^3-3 i^2 x^2-4 i^2 x+3 i x^3
      -2 i x^2-5 i x+2 x^3\bigr) s_{n,i+1} \\
    & -a (i+2) (i+n-1) (i-x+1) \bigl(a i^3+2 a i^2-3 a i-i x^2-i x-x^2-x\bigr) s_{n,i} = 0.
  \end{align*}
  The sequence $s_{n,i}$ is restricted to $0\leq i<n-1$, and thus the support of the
  above recurrences prohibits one to use them for computing
  $s_{2,0}, s_{3,0}, s_{3,1}, s_{4,1}, s_{4,2}, s_{5,2}$; these have to be given as
  initial values. Moreover, one cannot use the second recurrence for computing $s_{n,3}$
  due to the factor~$i$ in its leading coefficient. This forces us to also include
  $s_{5,3}$ and $s_{6,3}$ into the initial conditions (note that $s_{n,3}$ for $n\geq7$
  can be computed using the first recurrence). It is not difficult to verify that all
  eight initial conditions are zero, and by virtue of the recurrences satisfied by
  $s_{n,i}$, it follows that $s_{n,i}=0$ for all $n,i$ with $0\leq i<n-1$.

  Identity \eqref{H3} is proven in a similar way. The sum on its left-hand side is split
  into two sums. A recurrence for the first one is obtained by calling
  \begin{center}
    \verb|ct1 = FindCreativeTelescoping[DFiniteTimes[|\hskip 160pt\null \\
    \hskip 50pt \verb|Annihilator[Binomial[x-n+j,j], {S[n], S[j]}], annc], S[j]-1];|
  \end{center}
  An analogous computation is done for the second sum. Combining the two results
  via the command
  \begin{center}
    \verb|DFinitePlus[ct1[[1]], ct2[[1]]];|
  \end{center}
  yields the following recurrence for the sum $s_n=\sum_{j=0}^{n-1} a_{n,j} c_{n,j}$:
  \[
    (a-1) n s_{n+2} - (a^2 n-6 a n+2 a+n) s_{n+1}-2 (a-1) a (2 n-1) s_n = 0.
  \]
  It is readily checked that $(a-1)^{\binom{n}{2}-\binom{n-1}{2}}=(a-1)^{n-1}$ is a solution
  of this recurrence, and that the necessary initial values are correct (i.e., that
  the asserted determinant evaluation holds for $n\leq3$). This concludes the proof
  of \eqref{H3}, and therefore the proof of the whole theorem.
\end{proof}

\begin{proof}[Second proof]
We use that $\binom Nk=\CT_{z}(1+z)^Nz^{-k}$, where $\CT_zf(z)$
denotes the constant term in~$z$ in the Laurent series $f(z)$. 
Furthermore, for a
Laurent aeries $f(z_0,z_1,\dots,z_{n-1})$ 
in $z_0,z_1,\dots,z_{n-1}$, we shall use the short notation
$$\CT_{\mathbf z}f(z_0,z_1,\dots,z_{n-1})$$ 
to denote the constant
term in this Laurent series.

Using these notations, our determinant can be written as
\begin{align*}
\det_{0\le i,j\le n-1}&\left(a^i\binom {x+i+j-1}j+\binom {x-i+j-1}j\right)\\
&=\CT_{\mathbf z}
\det_{0\le i,j\le n-1}\left(a^i\frac {(1+z_j)^{x+i+j-1}} {z_j^j}
+\frac {(1+z_j)^{x-i+j-1}} {z_j^j}\right)\\
&=
\CT_{\mathbf z}a^{\frac {1} {2}\binom n2}
\bigg(\prod _{j=0} ^{n-1}\frac {(1+z_j)^{x+j-1}} {z_j^j}\bigg)
\det_{0\le i,j\le n-1}\left(a^{i/2}(1+z_j)^{i}
+a^{-i/2}(1+z_j)^{-i}\right).
\end{align*}
The determinant can be evaluated by means of \cite[Eq.~(2.5)]{KratBN}.
Thus, we obtain
\begin{align*}
\det_{0\le i,j\le n-1}&\left(a^i\binom {x+i+j-1}j+\binom {x-i+j-1}j\right)\\
&\kern-.5cm
=
\CT_{\mathbf z}2a^{-\frac {1} {2}\binom n2}
\bigg(\prod _{j=0} ^{n-1}\frac {(1+z_j)^{x+j-n}} {z_j^j}\bigg)\\
&\kern1cm
\times
\bigg(
\prod _{0\le i<j\le n-1} ^{}
\left(\sqrt a(1+z_i)-\sqrt a(1+z_j)\right)
\left(1-a(1+z_i)(1+z_j)\right)
\bigg)\\
&\kern-.5cm
=
\CT_{\mathbf z}2
\bigg(\prod _{j=0} ^{n-1}\frac {(1+z_j)^{x+j-n}} {z_j^j}\bigg)
\bigg(
\prod _{0\le i<j\le n-1} ^{}
\left(z_i-z_j\right)
\big((1-a) -a(z_i+z_j+z_iz_j)\big)
\bigg).
\end{align*}
Since this is a constant term, we get the same value if we permute the
variables $z_0,z_1,\dots, z_{n-1}$. So, let us symmetrize the last
expression, meaning that we sum this expression over all possible
permutations of the variables. Obviously, in order to get the same value
again, we must divide the result by~$n!$. This leads to
\begin{multline*}
\det_{0\le i,j\le n-1}\left(a^i\binom {x+i+j-1}j+\binom {x-i+j-1}j\right)\\
=\frac {2} {n!}
\CT_{\mathbf z}
\bigg(\prod _{j=0} ^{n-1}(1+z_j)^{x-n}\bigg)
\bigg(
\prod _{0\le i<j\le n-1} ^{}
\left(z_i-z_j\right)
\big((1-a) -a(z_i+z_j+z_iz_j)\big)
\bigg)\\
\times
\det_{0\le i,j\le n-1}\left(\left(\frac {1+z_i} {z_i}\right)^j\right).
\end{multline*}
The determinant can be evaluated by means of the evaluation of the
Vandermonde determinant, so that
\begin{align*}
\det_{0\le i,j\le n-1}&\left(a^i\binom {x+i+j-1}j+\binom {x-i+j-1}j\right)\\
&\kern-13pt
=\frac {2} {n!}
\CT_{\mathbf z}
\bigg(\prod _{j=0} ^{n-1}(1+z_j)^{x-n}\bigg)
\bigg(
\prod _{0\le i<j\le n-1} ^{}
\left(z_i-z_j\right)
\big((1-a) -a(z_i+z_j+z_iz_j)\big)
\bigg)\\
&\kern2cm
\times
\bigg(
\prod _{0\le i<j\le n-1} ^{}
\left(\frac {1+z_j} {z_j}-\frac {1+z_i} {z_i}\right)\\
&\kern-13pt
=\frac {2} {n!}
\CT_{\mathbf z}
\bigg(\prod _{j=0} ^{n-1}\frac {(1+z_j)^{x-n}} {z_j^{n-1}}\bigg)
\bigg(
\prod _{0\le i<j\le n-1} ^{}
\left(z_i-z_j\right)^2
\big((1-a) -a(z_i+z_j+z_iz_j)\big)
\bigg).
\end{align*}
Now, the square of the Vandermonde product, $\prod _{0\le i<j\le n-1} ^{}
\left(z_i-z_j\right)^2$, is a homogeneous polynomial of degree $n(n-1)$.
Moreover, it is not very difficult to see that the coefficient of
$(z_0z_1\cdots z_{n-1})^{n-1}$ in it equals~$(-1)^{\binom n2}n!$. 
This implies that
\begin{align*}
\det_{0\le i,j\le n-1}&\left(a^i\binom {x+i+j-1}j+\binom {x-i+j-1}j\right)\\
&=2(-1)^{\binom n2}
\CT_{\mathbf z}
\bigg(\prod _{j=0} ^{n-1}(1+z_j)^{x-n}\bigg)
\bigg(
\prod _{0\le i<j\le n-1} ^{}
\big((1-a) -a(z_i+z_j+z_iz_j)\big)
\bigg)\\
&=2(a-1)^{\binom n2},
\end{align*}
as desired.
\end{proof}

\section{Proof of Conjecture \ref{conj:DiFran}}
\label{sec:conj1}

Here we prove Conjecture~\ref{conj:DiFran} using the holonomic Ansatz.

\begin{theorem} \label{thm:DiFran}
For all positive integers $n$, we have
\begin{equation} \label{eq:DiFran1} 
  \det_{0\leq i,j\le n-1} \left(2^i \binom{i+2j+1}{2j+1} - \binom{i-1}{2j+1}\right) =
  2\prod_{i=1}^n \frac{2^{i-1} \, (4i-2)!}{(n+2i-1)!},
\end{equation}
where the binomial coefficient is defined as in~\eqref{eq:bin}.
\end{theorem}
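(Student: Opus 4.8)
The plan is to prove \eqref{eq:DiFran1} by Zeilberger's holonomic Ansatz, exactly in the style of the first proof of Theorem~\ref{thm:einfach}. Write $a_{i,j}=2^i\binom{i+2j+1}{2j+1}-\binom{i-1}{2j+1}$ for the $(i,j)$-entry of the matrix $A_n$. These entries are holonomic in both $i$ and $j$ (each is a fixed $\mathbb{Z}$-linear combination of shifts of binomial coefficients) and, crucially, do not depend on $n$, so $A_{n-1}$ is the upper-left submatrix of $A_n$ and the Ansatz applies. First I would compute the normalized cofactors $c_{n,j}$ of \eqref{cnj} for $n$ up to some bound --- most efficiently by solving the linear system \eqref{H1}--\eqref{H2} rather than via minors --- and feed these data to a guesser to obtain candidate bivariate recurrences generating a holonomic description of $(c_{n,j})$. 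One subtlety peculiar to this determinant is the step-two pattern $2j+1$ in the column index; the guessed operators are therefore likely to be bulkier than those in the warmup, but conceptually nothing changes.

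Next I would certify the guessed recurrences by proving \eqref{H1} and \eqref{H2}. For \eqref{H1}, I would specialize the annihilator to the diagonal $j\mapsto n-1$ to obtain a recurrence in $n$ alone for $c_{n,n-1}$, and check that the constant sequence $c_{n,n-1}=1$ solves it together with the first few computed initial values. For \eqref{H2}, I would treat $c_{n,j}$ as a sequence in the three indices $n,i,j$, multiply by $a_{i,j}$ using holonomic closure properties, and apply creative telescoping in $j$ to produce recurrences in $(n,i)$ for $s_{n,i}:=\sum_{j=0}^{n-1}a_{i,j}c_{n,j}$. As in the warmup, I expect the telescoper's leading coefficient and the restriction $0\le i<n-1$ to exclude a handful of boundary values (near $i=n-2$ and wherever the leading coefficient vanishes); these finitely many values must be supplied as initial conditions and checked to vanish, after which the recurrences force $s_{n,i}\equiv 0$.

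Finally, I would establish \eqref{H3}. Applying creative telescoping once more --- now to $\sum_{j}a_{n-1,j}c_{n,j}$, again splitting the summand into its two binomial pieces and recombining via closure properties --- yields a linear recurrence in $n$ for the ratio $\det(A_n)/\det(A_{n-1})$. It then remains to verify that the conjectured quotient
\[
\frac{b_n}{b_{n-1}}=\frac{2^{n-1}\,(4n-2)!}{(3n-1)!\,\prod_{i=1}^{n-1}(n+2i-1)},
\]
coming from the product on the right-hand side of \eqref{eq:DiFran1}, satisfies this recurrence, and to match enough initial values (equivalently, to check \eqref{eq:DiFran1} directly for small $n$). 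Since $b_n/b_{n-1}$ is an explicit ratio of factorials and Pochhammer-type products, confirming that it solves the recurrence is a routine rational-function identity.

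The hard part will not be conceptual but computational: all of the work lives in getting the guessing to return correct operators from finitely much data and in pushing the creative-telescoping certificates for \eqref{H2} and \eqref{H3} through to completion. The $2j+1$ structure inflates the size of the intermediate recurrences and certificates, and the chief risk is that the operators become too large to handle or that boundary and leading-coefficient issues in \eqref{H2} force more initial values than anticipated. As the narrative preceding the theorem indicates, however, none of these obstacles is fatal here: the evaluation is in fact routinely provable by this method, so I would expect the plan to go through once the computations are carried out.
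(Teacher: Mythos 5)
Your proposal follows exactly the same route as the paper's own proof: the holonomic Ansatz with guessed bivariate recurrences for $c_{n,j}$, certification via \eqref{H1} (specializing to the diagonal $j=n-1$), \eqref{H2} (splitting $a_{i,j}$ into its two binomial pieces, creative telescoping, and checking the finitely many boundary values forced by the supports and vanishing leading coefficients), and \eqref{H3} (verifying that $b_n/b_{n-1}$ — which you compute correctly, matching the paper's form $\frac{(4n-2)!}{(3n-1)!\,(\frac{n+1}{2})_{n-1}}$ — satisfies the telescoped recurrence together with small initial cases). The paper likewise leaves the heavy computations to accompanying electronic material, so your plan is a faithful match in both method and structure.
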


\begin{proof}
  We apply the holonomic Ansatz, described in Section~\ref{sec:holonom}.
  Computational details can be found in the accompanying electronic
  material~\cite{EM}.

  We are able to guess three recurrence relations for the quantities $c_{n,j}$, as
  defined in~\eqref{cnj}, whose shape suggests that they indeed form a holonomic
  sequence. The recurrences are too big to be displayed here (they would
  require approximately one page), so we give only their supports instead:
  \[
    \{c_{n,j+2},c_{n+1,j},c_{n,j+1},c_{n,j}\}, \quad
    \{c_{n+1,j+1},c_{n+1,j},c_{n,j+1},c_{n,j}\}, \quad
    \{c_{n+2,j},c_{n+1,j},c_{n,j+1},c_{n,j}\}.
  \]
  When translated into operator notation --- $\Sn$ denoting the forward shift
  operator $n\mapsto n+1$ --- their supports can be written more
  compactly as
  \begin{equation}\label{eq:supp}
    \{\Sj^2, \Sn, \Sj, 1\}, \quad
    \{\Sn \Sj, \Sn, \Sj, 1\}, \quad
    \{\Sn^2, \Sn, \Sj, 1\}.
  \end{equation}
  The corresponding operators form a (left) Gr\"obner basis, which is a useful
  property, as we will see later. During the guessing process, we have taken
  care that the final operators will have this property. Also for later use,
  we denote by~$\mathfrak{I}$ the annihilator ideal they generate.

  We want to show that the guessed recurrences (represented by~$\mathfrak{I}$)
  produce the correct values of $c_{n,j}$ for all~$j$ with
  $0\leq j<n$. For this
  purpose, we introduce another sequence $\tilde{c}_{n,j}$ that is defined
  via~$\mathfrak{I}$, and we show that it actually agrees with the
  sequence~$c_{n,j}$. The latter will be done by verifying that~\eqref{H1}
  and~\eqref{H2} hold when $c_{n,j}$ is replaced by~$\tilde{c}_{n,j}$.

  From the leading monomials $\Sj^2,\, \Sn\Sj,\, \Sn^2$ in~\eqref{eq:supp} one
  can deduce, using the theory of Gr\"obner bases, that the holonomic rank
  of~$\mathfrak{I}$ is three. Stated differently, the three irreducible
  monomials $1,\,\Sj,\,\Sn$ necessitate to specify initial values
  $\tilde{c}_{1,0},\,\tilde{c}_{1,1},\,\tilde{c}_{2,0}$ in
  order to fix a particular solution of the annihilator~$\mathfrak{I}$. Hence,
  we define $\tilde{c}_{n,j}$ to be the unique solution of~$\mathfrak{I}$
  whose three initial values agree with~$c_{n,j}$.
  
  From this definition of $\tilde{c}_{n,j}$ one can derive algorithmically a
  (univariate) recurrence for the almost-diagonal sequence
  $\tilde{c}_{n,n-1}$. This recurrence has order~$3$, which is equal to the
  holonomic rank of~$\mathfrak{I}$, as expected.  The corresponding operator
  has the right factor $\Sn-1$, and more precisely, it can be written in the form
  \begin{multline*}
     \bigl(9 (n+4) (2 n+5) (3 n+2) (3 n+4) (3 n+5) (3 n+7) p_1(n) \Sn^2 \\
     + 12 (3 n+2) (3 n+4) (4 n+3) (4 n+5) p_2(n) \Sn \\
     - 16 n (2 n+1) (4 n-1) (4 n+1) (4 n+3) (4 n+5) p_1(n+1)\bigr) \cdot (\Sn - 1),
  \end{multline*}
  where $p_1(n)$ and $p_2(n)$ are irreducible polynomials of degree~$9$
  and~$11$, respectively. It follows that any constant sequence is a solution
  of this recurrence.  Together with the initial conditions
  $\tilde{c}_{1,0}=\tilde{c}_{2,1}=\tilde{c}_{3,2}=1$, which are easy to
  check, this proves that $\tilde{c}_{n,n-1}=1$ holds for all $n\geq1$.

  The proof of the summation identity~\eqref{H2} is achieved by the method
  of creative telescoping, which delivers a set of recurrence equations
  (in~$n$ and~$i$) that are satisfied by the sum. For reasons of efficiency,
  we split the sum in \eqref{H2} into two sums as follows:
  \[
    \sum_{j=0}^{n-1} a_{i,j} \tilde{c}_{n,j} =
    \sum_{j=0}^{n-1} 2^i \binom{i+2j+1}{2j+1} \tilde{c}_{n,j}
    - \sum_{j=0}^{n-1} \binom{i-1}{2j+1} \tilde{c}_{n,j}.
  \]
  For each of the two sums, we obtain an annihilator ideal that is generated by
  four operators whose supports are as follows:
  \begin{align*}
    & \{\Si^3, \Sn^2, \Sn\Si , \Si^2, \Sn, \Si, 1\}, \quad
    \{\Si^2 \Sn, \Sn^2, \Sn\Si, \Si^2, \Sn, \Si, 1\}, \\
    & \{\Si \Sn^2, \Sn^2, \Sn\Si, \Si^2, \Sn, \Si, 1\}, \quad
    \{\Sn^3, \Sn^2, \Sn\Si, \Si^2, \Sn, \Si, 1\}.
  \end{align*}
  Actually, the two sums are annihilated by the very same operators, hence
  these operators constitute an annihilator for the left-hand side of~\eqref{H2}.
  The leading terms of the operators have the form:
  \begin{align*}
    & 12 (i-1) i (i+1) (3 n+1) (3 n+4) (4 n-1) (4 n+1) (i-n+3) (i-n+4) q_1(i,n) \Si^3, \\
    & {-9} i (3 n-1) (3 n+1) (3 n+4) q_2(i,n) \Sn\Si^2, \\
    & {-18} (i-1) i (n+1) (2 n+3) (3 n-1) (3 n+1)^2 (3 n+2) (3 n+4) (i+2 n+5) q_3(i,n) \Sn^2\Si, \\
    & {-54} (n+1) (n+2) (2 n+3) (2 n+5) (3 n-1) (3 n+1)^2 (i-2 n-6) (i-2 n-5) q_4(i,n) \Sn^3,
  \end{align*}
  where $q_1,q_2,q_3,q_4$ are (not necessarily irreducible) polynomials in~$n$ and~$i$.
  It remains to check a finite set of initial values. The shape of this set is
  determined by the support displayed above, by the condition $i<n-1$, and
  by the zeros of the leading coefficients of the operators. More precisely
  we have to verify that $\sum_{j=0}^{n-1} a_{i,j} \tilde{c}_{n,j} = 0$ for
  \begin{align*}
    (i,n) \in \{&(0, 2), (0, 3), (0, 4), (0, 5), (0, 6), (1, 3), (1, 4), (1, 5), (2, 4), \\
    & (1, 6), (1, 7), 
    (2, 5), (2, 6), 
    (2, 7), (2, 8), 
    (3, 5), (4, 6)\} 
  \end{align*}
  (where the points in the first line are determined by the support, and the
  second line is determined by the zeros of the leading coefficients).
  This verification is successful, and hence it follows that $\tilde{c}_{n,j}=c_{n,j}$
  for all~$j$ with $0\leq j<n$,
  which allows us to use $\mathfrak{I}$ as a holonomic
  definition of~$c_{n,j}$.

  In order to derive a recurrence for the left-hand side of~\eqref{H3} we
  split the sum into two sums, as before:
  \[
    \sum_{j=0}^{n-1} a_{n,j} c_{n,j} =
    \sum_{j=0}^{n-1} 2^n \binom{n+2j+1}{2j+1} c_{n,j}
    - \sum_{j=0}^{n-1} \binom{n-1}{2j+1} c_{n,j}.
  \]
  Then we compute, for each of the two
  sums, a recurrence by creative telescoping. In both cases, the output is a
  recurrence of order~$6$ with polynomial coefficients of degree
  approximately~$52$. Actually one finds that both sums satisfy the same
  order-$6$ recurrence, and hence so does their sum.  One now has to verify
  that $b_n/b_{n-1}$ satisfies this order-$6$ recurrence, where $b_n$ denotes
  the right-hand side of \eqref{eq:DiFran1}. We have
  \[
    \frac{b_n}{b_{n-1}} =
    \frac{(4n-2)!}{(3n-1)! \, \bigl(\frac{n+1}{2}\bigr)_{n-1}}.
  \]
  Note that this expression is hypergeometric in $n/2$ and hence satisfies a
  second-order recurrence whose operator has support $\{\Sn^2,1\}$.
  Right-dividing the operator of the order-$6$ recurrence, call it~$P$, by
  this second-order operator yields~$0$, hence $P$ annihilates $b_n/b_{n-1}$.
  The leading term of the operator~$P$ is
  \begin{multline*}
     4374 (n+7) (2 n+9) (2 n+11) (3 n-1) (3 n+1) (3 n+2) (3 n+4) (3 n+5) (3 n+7) \\
     \times (3 n+8) (3 n+10) (3 n+11) (3 n+13)^2 (3 n+14) (3 n+16) (3 n+17) p(n) \Sn^6,
  \end{multline*}
  where $p(n)$ is an irreducible polynomial of degree~$35$. Obviously this
  leading coefficient does not vanish for any positive integer~$n$, hence
  it suffices to verify
  \[
    \frac{\det_{0\leq i,j\le n-1}(a_{i,j})}{\det_{0\leq i,j\le n-2}(a_{i,j})} = \frac{b_n}{b_{n-1}}
  \]
  for $n=2,\dots,7$. On both sides, one calculates the values $4$, $15$, $832/15$,
  $204$, $9728/13$, $16445/6$, respectively.  By virtue of the recurrence~$P$, the
  asserted identity~\eqref{eq:DiFran1} holds for all integers $n\geq1$.
\end{proof}

\section{Proof of Conjecture \ref{conj:CK}}
\label{sec:conj2}

In this section, we present our proofs of \eqref{eq:CK1}
and~\eqref{eq:CK2}. As was mentioned in the introduction, it turned
out that the capacity of today's computers is not sufficient for the
holonomic Ansatz to produce proofs of theses two identities, although
it very likely applies. Instead,
the starting point for our proofs is
determinant evaluations that have been established in~\cite{CoHuKr23}.
In their statements, there appears the {\it Delannoy number} $D(i,j)$,
which by definition is the number of paths from $(0,0)$ to $(i,j)$
consisting of right-steps $(1,0)$, up-steps $(0,1)$, and diagonal
steps $(1,1)$. Their generating function is given by (cf.\
\cite[Ex.~21 in Ch.~I]{ComtAA})
\begin{equation} \label{eq:f-coef}
D(i,j)=\coef{u^iv^j}\frac {1} {1-u-v-uv},
\end{equation}
where $\coef{u^iv^j}g(u,v)$ denotes the coefficient of $u^iv^j$ in
the formal power series (in the variables~$u$ and~$v$) $g(u,v)$.
The following result is \cite[Th.~5.1, in combination with
Eqs.~(4.3)--(4.5) and
paragraph above and including Eq.~(4.6)]{CoHuKr23}

\begin{theorem}
For all positive integers $k$ and $n$, we have
\begin{multline} \label{eq:D1}
D_1(k;n):=
\det_{1 \leq i,j \leq k} \left( D(2j-i, i+n-k-1) \right)\\
=        \left.\prod_{i\ge 0} \left( \prod_{s=-2k+4i+1}^{-k+2i}(2n+s)\prod_{s=k-2i}^{2k-4i-2}(2n+s) \right) \middle/ \prod_{i=1}^{k-1} (2i+1)^{k-i} \right.
        ,
\end{multline}
\end{theorem}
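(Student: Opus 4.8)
The right-hand side is a product of linear factors in $n$, all of the form $2n+s$, divided by a constant; this shape points directly to the \emph{identification of factors} method. The plan is to set $N:=2n$, treat $N$ as an indeterminate, and prove the asserted equality as a polynomial identity in $N$.

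First I would establish polynomiality and bound the degree. Using the explicit expansion $D(a,b)=\sum_{\ell\ge0}\binom a\ell\binom b\ell 2^\ell$, each entry $D(2j-i,i+n-k-1)$ is a polynomial in $n$ of degree at most $2j-i$ (and it vanishes identically when $2j-i<0$, the first Delannoy argument then being negative). Consequently every nonzero Leibniz term $\prod_i D(2\sigma(i)-i,\dots)$ is homogeneous of total degree $\sum_i(2\sigma(i)-i)=\binom{k+1}2$, so $D_1(k;n)$ is a polynomial in $N$ of degree at most $\binom{k+1}2$. A short count shows that the number of factors $2n+s$ on the right-hand side (summing the lengths of the two inner products over all $i\ge0$) is exactly $\binom{k+1}2$ as well; for $k=2$ both sides equal $\tfrac83n^3-4n^2-\tfrac83n+4$, a cubic with the three factors $(2n-3)(2n-2)(2n+2)/3$.

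The core step is to locate the zeros. For each factor $2n+s$ I would produce an explicit linear dependence among the columns of the matrix at $n=-s/2$. The natural vehicle is the single-variable form $D(a,b)=\CT_u\,u^{-a}(1+u)^b(1-u)^{-b-1}$: at the specialized value $b_i=i+n-k-1$ forced by $n=-s/2$, one of the exponents in $(1+u_i)^{b_i}(1-u_i)^{-b_i-1}$ becomes a small integer, the rational function degenerates, and the coefficient vectors constituting the columns fall into a subspace of dimension $<k$. The genuine obstacle is \emph{multiplicity}: for $k\ge6$ a given value of $s$ lies in the first inner product for two consecutive indices~$i$ (for instance $s=-7$ when $k=6$), so the factor $2n+s$ must divide $D_1(k;n)$ to higher order. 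Establishing these higher-order vanishings — by exhibiting several independent relations at once, or by differentiating in $N$ — and checking that the order of vanishing matches the multiplicity read off from the product for \emph{every} admissible $s$ is where the real work lies. Once this is done, $D_1(k;n)$ is divisible by the entire product on the right-hand side, whose degree equals the degree bound $\binom{k+1}2$; hence the two differ only by a constant.

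Finally I would fix that constant from the leading coefficient in $N$. Because the top $b$-coefficient of $D(a,b)$ is $2^a/a!$, the leading coefficient of $D_1(k;n)$ in $N$ equals the reciprocal-factorial determinant $\det_{1\le i,j\le k}\!\big(1/(2j-i)!\big)$ (with $1/m!:=0$ for $m<0$), which is nonzero (confirming the degree is exactly $\binom{k+1}2$) and which one evaluates to $\prod_{i=1}^{k-1}(2i+1)^{-(k-i)}$ — exactly the constant on the right-hand side (for $k=2$ this determinant is $\tfrac12-\tfrac16=\tfrac13$). As an alternative to the factor method, constant-term calculus in the spirit of the second proof of Theorem~\ref{thm:einfach} gives, after assigning a variable $u_i$ to each row,
$$D_1(k;n)=\CT_{\mathbf u}\prod_{i=1}^k\Big(u_i^{\,i-2k}(1+u_i)^{i+n-k-1}(1-u_i)^{k-n-i}\Big)\prod_{1\le i<l\le k}(u_i^2-u_l^2);$$
the difficulty then migrates to evaluating this symmetric constant term, where the $i$-dependent exponents make the symmetrization markedly less routine than in the warmup.
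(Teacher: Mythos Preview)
The paper does not prove this theorem at all: it is imported verbatim from \cite{CoHuKr23} (as announced in the sentence immediately preceding the statement), so there is no ``paper's own proof'' to compare against.

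Your outline via identification of factors is a reasonable plan, and the scaffolding is correct: each entry $D(2j-i,i+n-k-1)$ is indeed a polynomial of degree $2j-i$ in~$n$, every nonzero Leibniz term has total degree $\binom{k+1}2$, the number of linear factors on the right-hand side is also $\binom{k+1}2$, and the leading coefficient in $N=2n$ reduces to the reciprocal-factorial determinant $\det_{1\le i,j\le k}\!\bigl(1/(2j-i)!\bigr)$, which does evaluate to $\prod_{i=1}^{k-1}(2i+1)^{-(k-i)}$. So the degree bound and the constant are pinned down.

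What you have not done --- and you say so yourself --- is the actual identification of factors. You describe the single-variable constant-term form of $D(a,b)$ and state that at the specialized $n$-values ``the coefficient vectors \dots\ fall into a subspace of dimension $<k$'', but you do not exhibit a single explicit linear relation among the columns (or rows), let alone the several independent relations needed when a value of~$s$ occurs with multiplicity. This is precisely the heart of the method, and it is not routine here: the entries involve Delannoy numbers rather than bare binomials, and the putative relations must be found and proved in closed form for all $k$ simultaneously. As written, the proposal is a correct strategy document, not a proof; the step you label ``where the real work lies'' is the entire proof.
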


We are now prepared for the proof of \eqref{eq:CK1}, which we
restate below with a modified, but equivalent, right-hand side.

\begin{theorem} \label{thm:D3}
For all positive integers $n$, we have
\begin{align}
D_2(n;x) :&{=}
\det_{0\le i,j\le n-1}\left(2^i\binom {x+i+2j+1}{2j+1}+\binom {x-i+2j+1}{2j+1}\right) \notag
\\
&{=}\
2^{\binom n2+1}\prod _{i=0} ^{n-1}\frac {i!} {(2i+1)!}
\prod _{i=0} ^{\fl{n/2}}(x+4i+1)_{n-2i}
\prod _{i=0} ^{\fl{(n-1)/2}}(x-2i+3n)_{n-2i-1}, \notag
\\
&{=}\ 2\prod_{i=1}^n \frac{2^{2 i-2} \, \Gamma\bigl(i\bigr) \, \Gamma\bigl(2i+x\bigr) \,
  \Gamma\bigl(4i+x-1\bigr) \, \Gamma\bigl(\frac{3 i+x-2}{2}\bigr)}{%
  \Gamma\bigl(2i\bigr) \, \Gamma\bigl(3i+x\bigr) \, \Gamma\bigl(3i+x-2\bigr) \,
  \Gamma\bigl(\frac{i+x}{2}\bigr)},
\label{eq:D3}
\end{align}
where the binomial coefficients have to be interpreted
according to~\eqref{eq:bin}.
\end{theorem}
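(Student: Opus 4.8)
The plan is to split the three-fold equality into two independent tasks: showing that the Pochhammer-product and the $\Gamma$-product on the right-hand side coincide, and then proving that $D_2(n;x)$ equals one of them by reducing it to the already-evaluated Delannoy determinant $D_1$ of~\eqref{eq:D1}.

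The equality of the two product forms is purely formal. I would rewrite every Pochhammer symbol as $(\alpha)_p=\Gamma(\alpha+p)/\Gamma(\alpha)$, collect the telescoping products over the index $i$, and merge the half-integer-shifted factors $\Gamma\bigl(\frac{3i+x-2}{2}\bigr)$ and $\Gamma\bigl(\frac{i+x}{2}\bigr)$ against the integer-shifted ones by means of the Legendre duplication formula $\Gamma(z)\,\Gamma(z+\tfrac12)=2^{1-2z}\sqrt\pi\,\Gamma(2z)$. Since both expressions are manifestly polynomials in $x$ of the same degree $n^2$, this bookkeeping is routine and carries no genuine difficulty.

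The substance lies in the second task. Following the constant-term philosophy of the second proof of Theorem~\ref{thm:einfach}, I would write $\binom Nk=\CT_z(1+z)^N z^{-k}$ to obtain
\[
 D_2(n;x)=\CT_{\mathbf z}\bigg(\prod_{j=0}^{n-1}\frac{(1+z_j)^{x+2j+1}}{z_j^{2j+1}}\bigg)\det_{0\le i,j\le n-1}\Bigl(\bigl(2(1+z_j)\bigr)^i+(1+z_j)^{-i}\Bigr),
\]
where the inner determinant is of the symmetrizable type handled by \cite[Eq.~(2.5)]{KratBN}. In parallel, inserting the single-variable generating function $D(a,b)=\coef{v^b}(1+v)^a(1-v)^{-a-1}$ and extracting one variable $v_j$ per column turns $D_1(n;m)$ (with size $k=n$ and parameter $m$) into
\[
 D_1(n;m)=\CT_{\mathbf v}\bigg(\prod_{j=0}^{n-1} v_j^{\,n+1-m}\frac{(1+v_j)^{2j}}{(1-v_j)^{2j+1}}\bigg)\det_{1\le i,j\le n}\bigl(h(v_j)^i\bigr),\qquad h(v)=\frac{1-v}{v(1+v)},
\]
whose inner determinant is a pure Vandermonde in the $h(v_j)$. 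Thus both determinants are reduced to a constant term of an explicit prefactor times a (near-)Vandermonde product. The goal is then to match the two constant-term expressions under an affine identification $m=m(x)$ of the free parameter, establishing an identity $D_2(n;x)=c(n)\,D_1\bigl(n;m(x)\bigr)$ with a simple prefactor $c(n)$, after which \eqref{eq:D1} delivers the closed form.

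The main obstacle is precisely this matching step: reconciling the ``odd-index'' structure of $D_2$ (the $2j+1$ together with the $2^i$ and the $i\mapsto -i$ symmetric entry) with the Delannoy entries of $D_1$, and in particular pinning down the correct affine substitution $m=m(x)$ and the prefactor $c(n)$ while keeping the two Vandermonde normalizations and the change of variables $z_j\leftrightarrow v_j$ mutually consistent. I would fix $m(x)$ and $c(n)$ by comparing the leading behaviour in $x$ and the small cases $n=1,2,3$, and then verify the full identity by checking that the two constant-term integrands agree after the substitution. I note that a direct constant-term attack on $D_2$ alone (in the style of the warmup) appears markedly harder here, since the nonuniform powers $z_j^{2j+1}$ obstruct the clean symmetrization that made the warmup collapse to a single leading coefficient; routing through the already-proven $D_1$ is what absorbs this difficulty. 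Once $D_2(n;x)=c(n)\,D_1\bigl(n;m(x)\bigr)$ is in hand, substituting~\eqref{eq:D1} and regrouping the resulting linear factors reproduces the Pochhammer form, completing the proof.
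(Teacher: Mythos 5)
Your high-level strategy --- reducing $D_2(n;x)$ to the Delannoy determinant $D_1$ of~\eqref{eq:D1} and finishing with routine product manipulations --- is exactly the route the paper takes: its key lemma states $D_1(k;y+k)=\tfrac12 D_2(k;2y)$, so your guessed form $D_2(n;x)=c(n)\,D_1\bigl(n;m(x)\bigr)$ is correct with $c(n)=2$ and $m(x)=\tfrac x2+n$. However, the mechanism you propose for establishing this relation has a genuine gap. Your two constant-term representations are themselves correct (for $D_2$ one writes $2^i(1+z_j)^i+(1+z_j)^{-i}=2^{i/2}\bigl(x_j^i+x_j^{-i}\bigr)$ with $x_j=\sqrt2(1+z_j)$, and then \cite[Eq.~(2.5)]{KratBN} applies), but the final step ``checking that the two constant-term integrands agree after the substitution'' cannot succeed, for a structural reason: the free parameters enter the two integrands in incompatible ways. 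In the $D_1$ representation the parameter $m$ sits in the power $v_j^{\,n+1-m}$, i.e., it governs \emph{which coefficient} of the Delannoy generating function is extracted; in the $D_2$ representation the parameter $x$ sits in the power $(1+z_j)^{x}$. Matching the $j$-dependence of the two column weights forces $\frac{1+v}{1-v}\propto\frac{1+z}{z}$, and together with matching the bilinear factors $1+v_i+v_j-v_iv_j\leftrightarrow 1+2z_i+2z_j+2z_iz_j$ arising from the two Vandermonde-type products, this essentially pins the change of variables down to the M\"obius map $v=1/(1+2z)$ (or a trivially related variant). Under that map, $v^{\,n+1-m}$ becomes a power of $(1+2z)$, never a power of $(1+z)$, so the integrands can agree only when both parameter exponents are absent --- a degenerate case --- and not for general $x$. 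A further point you gloss over: constant terms are not invariant under substitution of the integration variables; one must pass to contour integrals and track Jacobians and contours, which already alters the integrand. Comparing leading behaviour and the cases $n=1,2,3$ can only \emph{guess} the relation, not prove it.

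What is missing is the paper's actual mechanism, which operates on the \emph{row} index rather than on the column variables. Following Di~Francesco, the whole matrix is encoded in a two-variable generating function $a(u,v)=\sum_{i,j}a_{i,j}u^iv^j$; the determinant is unchanged when $a(u,v)$ is multiplied by a power series in $u$ or in $v$ with constant coefficient~$1$, or when $u$ is replaced by a power series with zero constant term and unit linear coefficient (these operations realize unitriangular row and column operations). The paper performs a $2$-section in $v$, multiplies by $(1-v)^{-N}$, applies the non-M\"obius substitution $u\mapsto u/((1-u)(1-2u))$ followed by multiplication by $(1-2u^2)/((1-u)(1-2u))$, and finally divides row~$0$ by~$2$ --- which is precisely where the factor $\tfrac12$ comes from, a factor whose origin your column-variable matching leaves unexplained. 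None of these operations has a counterpart in a change of variables $z_j\leftrightarrow v_j$, which is why your matching step fails while the paper's row-variable calculus succeeds. (A small additional slip: the two product forms on the right-hand side of \eqref{eq:D3} have degree $\binom{n+1}2$ in $x$, not $n^2$; their equality is indeed routine via the Legendre duplication formula, as you say.)
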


The theorem will, up to some routine manipulations, immediately follow
from the relation below.

\begin{lemma} \label{lem:CK1}
For all positive integers $k$, we have
\begin{equation} \label{eq:D2=D3} 
D_1(k;y+k)=\tfrac {1} {2}D_2(k;2y).
\end{equation}
\end{lemma}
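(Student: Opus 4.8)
The plan is to prove \eqref{eq:D2=D3} by rewriting both determinants as $k$-fold constant terms and matching them through an explicit change of variables, in the spirit of the second proof of Theorem~\ref{thm:einfach}. First I would record the single-variable generating function for the Delannoy numbers, $D(a,b)=\coef{u^a}(1+u)^b(1-u)^{-b-1}$, which follows from \eqref{eq:f-coef} by summing the geometric series in~$v$. Writing each entry of $D_1(k;y+k)=\det_{1\le i,j\le k}(D(2j-i,\,i+y-1))$ as $\CT_{u}\,u^{i-2j}(1+u)^{i+y-1}(1-u)^{-i-y}$, assigning a separate variable $u_j$ to the $j$-th column, and introducing $t_j:=u_j(1+u_j)/(1-u_j)$, the $i$-dependence collapses to $t_j^{\,i}$, so the determinant factors as $\det_{1\le i,j\le k}(t_j^{\,i})=\bigl(\prod_j t_j\bigr)\prod_{i<j}(t_j-t_i)$ times a product of column weights. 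Using $t_j-t_i=(u_j-u_i)(1+u_i+u_j-u_iu_j)/\bigl((1-u_i)(1-u_j)\bigr)$, this presents $D_1(k;y+k)$ as the constant term of a product of column weights and the pair factor $\prod_{i<j}(u_j-u_i)(1+u_i+u_j-u_iu_j)$.

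For the right-hand side I would use $\binom Nm=\coef{z^m}(1+z)^N$ to write each entry of $D_2(k;2y)$ as $\coef{z^{2j+1}}(1+z)^{2y+2j+1}\bigl(2^i(1+z)^i+(1+z)^{-i}\bigr)$. Again assigning a variable $z_j$ to the $j$-th column and pulling $2^{i/2}$ out of the $i$-th row, the remaining determinant has the shape $\det_{0\le i,j\le k-1}\bigl(s_j^{\,i}+s_j^{-i}\bigr)$ with $s_j:=\sqrt2\,(1+z_j)$, which is exactly the type evaluated by \cite[Eq.~(2.5)]{KratBN} already invoked in the warmup; it equals $2\prod_{i<j}(\sigma_j-\sigma_i)$ with $\sigma_j:=s_j+s_j^{-1}$. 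This presents $\tfrac12 D_2(k;2y)$ as a constant term of column weights times a Vandermonde in the~$\sigma_j$.

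The pivotal step is then the substitution $z_j=u_j/(1-u_j)$, equivalently $s_j=\sqrt2/(1-u_j)$. A short computation shows that it turns $\sigma_j$ into the affine function $(t_j+3)/\sqrt2$, so the two Vandermondes agree up to the explicit constant $2^{-\binom k2/2}$, and that it sends the pair factor $(z_i-z_j)\bigl(1-2(1+z_i)(1+z_j)\bigr)$ to a constant multiple of $(u_j-u_i)(1+u_i+u_j-u_iu_j)$; being a formal substitution with $z_j=u_j+u_j^2+\cdots$, it is admissible for the constant term, the Jacobian contributing $u\,\phi'(u)/\phi(u)=1/(1-u)$ per variable. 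After collecting all powers of $2$ and of $u_j,\,1\pm u_j$, both $D_1(k;y+k)$ and $\tfrac12 D_2(k;2y)$ become the constant term of \emph{one and the same} product of that pair factor with column weights $\prod_j u_j^{-1-2j}(1-u_j)^{-2y-k}$, except that the integrand for $D_1$ carries an additional factor $\prod_{j}(1-u_j^2)^{y}$.

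Thus the whole identity reduces to showing that inserting the even factor $\prod_j(1-u_j^2)^y$ does not change this constant term, and I expect this to be the main obstacle: the individual column coefficients are genuinely altered by the factor (as one already sees by expanding the $k=2$ case), so the argument cannot be made weight by weight. My plan for it is to symmetrize over the $u_j$ as in the warmup, turning $\prod_j u_j^{-2j}\prod_{i<j}(u_j-u_i)$ into a squared Vandermonde, and to argue that the only surviving contribution of $\prod_j(1-u_j^2)^y$ is its constant term~$1$, via a degree-and-parity count that pins the extracted coefficient; an alternative route is the involution $u\mapsto-(1+u)/(1-u)$, under which both $(u_j-u_i)$ and $(1+u_i+u_j-u_iu_j)$ transform covariantly and which interchanges the factors $(1-u_j)$ and $(1+u_j)$ relating the two weights, carried out as a contour-integral substitution. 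Once this invariance is established, \eqref{eq:D2=D3} follows, and inserting the known product formula \eqref{eq:D1} for $D_1$ yields \eqref{eq:D3} after the routine $\Gamma$-function manipulations.
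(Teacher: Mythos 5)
Your reduction is correct as far as it goes --- I checked the Delannoy generating function, the factorization $t_j-t_i=(u_j-u_i)(1+u_i+u_j-u_iu_j)/\bigl((1-u_i)(1-u_j)\bigr)$, the identity $\sigma_j=(t_j+3)/\sqrt2$, and the Jacobian factor $1/(1-u_j)$, and both sides do become constant terms of one and the same pair factor times column weights, with the $D_1$-side carrying the extra factor $\prod_j(1-u_j^2)^y$ --- but the proposal stops exactly where the lemma lives. All of your transformations are reversible, so the ``insertion-invariance'' of $\prod_j(1-u_j^2)^y$ is equivalent to \eqref{eq:D2=D3} itself, and neither of the two strategies you offer for it works as stated. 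A degree-and-parity count cannot pin the coefficient: already for $k=2$ (say $y=1$) the insertion changes the individual products of coefficients, and the constant term survives only because the corrections coming from the monomials $u_1$ and $-u_0$ of the pair factor cancel against each other; after symmetrization the polynomial $\prod_{i<j}(u_j-u_i)^2(u_i+u_j)(1+u_i+u_j-u_iu_j)$ is neither homogeneous nor of definite parity in any variable, so neither degree nor parity forces anything. The involution $u\mapsto\psi(u)=-(1+u)/(1-u)$ is not admissible either: it sends $0\mapsto-1$, hence it is not a formal power-series substitution, and as a contour substitution it replaces a circle around the origin by one around $-1$, so the resulting integral no longer computes the constant term; moreover it does not interchange $1-u$ and $1+u$, since $1-\psi(u)=2/(1-u)$ while $1+\psi(u)=-2u/(1-u)$, so the transformed integrand in fact acquires poles at $u=-1$ and a zero of order $y$ at the origin.

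The missing step does have a one-line proof, and it is precisely the mechanism on which the paper's own proof rests. In your representation the $(i,j)$-entry of the $D_1$-matrix is $\coef{u^{2j-i}}f_i(u)$ with $f_i(u)=(1+u)^{i+y-1}(1-u)^{-i-y}$ and $1\le i,j\le k$: the column index enters only through $u^{2j}$. If $g(u)=\sum_{m\ge0}g_{2m}u^{2m}$ is any \emph{even} power series with $g_0=1$, then $\coef{u^{2j-i}}g(u)f_i(u)=\sum_{m\ge0}g_{2m}\coef{u^{2(j-m)-i}}f_i(u)$, and all terms with $m\ge j$ vanish because there $2(j-m)-i<0$ while $f_i$ has no negative powers; hence replacing $f_i$ by $g\,f_i$ amounts to multiplying the matrix on the right by a unitriangular matrix and leaves the determinant unchanged. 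Taking $g(u)=(1-u^2)^{-y}$ replaces $f_i$ by $(1+u)^{i-1}(1-u)^{-i-2y}$, and redoing your Vandermonde factorization for this new matrix produces exactly the weight $u_j^{-2j-1}(1-u_j)^{-2y-k}$ of $\tfrac12 D_2(k;2y)$ --- which is your missing identity. (This is the paper's step of multiplying the two-variable generating function by $(1-v)^{-N}$: a power series in the column variable with constant term $1$ acts as a triangular column operation; in your one-variable-per-column encoding the column variable is $v=u^2$, whence the evenness requirement.) With this supplied, your argument closes and yields a proof of \eqref{eq:D2=D3} genuinely different from the paper's, which never evaluates any Vandermonde-type determinant but instead matches the two-variable generating functions $d(u,v)$ and $f(u,v)$ after a chain of such triangular transformations and the substitution $u\mapsto u/((1-u)(1-2u))$, in the style of Di Francesco.
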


\begin{proof}
We follow --- and extend --- Di~Francesco's arguments in
\cite[Proofs of Ths~3.3, 4.3, and~8.2]{DiFran21}. His idea is to work
with determinants of the form  $\det A(n)$ where
$A(n)=(a_{i,j})_{0\le i,j\le n-1}$, with
the entries $a_{i,j}$ given by a two-variable generating function,
$$
a(u,v)=\sum_{i,j\ge0}a_{i,j}u^iv^j.
$$
The determinant will be unchanged if the matrix is multiplied (from
the right or from the left) by a triangular matrix with $1$s on the
diagonal. It is easy to see that multiplication of $a(u,v)$ by a power
series in~$u$ or by a power series in~$v$ with constant coefficient~1
will result in the
multiplication of $A(n)$ by such a triangular matrix, and thus the
determinant of the new matrix is still the same. The same property
holds if in $a(u,v)$ we replace $u$ by a power series in~$u$ with zero
constant coefficient and coefficient of~$u$ equal to~1. Di~Francesco
argues with the help of complex integrals, but this is not necessary.

\medskip
We start with expressing $D_1(n;k)$ in the above
form. By shifting the row and column indices~$i$ and~$j$ by~1, we have
$$
D_1(k;n)=
\det_{0 \leq i,j \leq k-1} \left( D(2j-i+1, i+n-k) \right).
$$
By~\eqref{eq:f-coef} (with the roles of $u$ and $v$ interchanged), we have
$$
D(2j-i+1, n - k+i)=\coef{u^{n-k+i}v^{2j-i+1}}\frac {1} {1-u-v-uv}.
$$
By replacing $u$ by $uv$, we see that
$$
D(2j-i+1, n - k+i)=\coef{u^{n-k+i}v^{n-k+2j+1}}\frac {1} {1-v-uv-uv^2}.
$$
From here on, we write $N$ for $n-k$ for short, so that
$$
D(2j-i+1, N+i)=\coef{u^{N+i}v^{N+2j+1}}\frac {1} {1-v-uv-uv^2}.
$$
If we denote the coefficient of $u^iv^j$ in $1/(1-v-uv-uv^2)$ by
$\alpha_{i,j}$, then
\begin{align*}
\sum_{i,j\ge0}\alpha_{i,j}u^iv^j&=\frac {1} {1-v-uv-uv^2}\\
&=\sum_{s\ge0}\left(\frac {uv(1+v)} {1-v}\right)^s\frac {1} {1-v}.
\end{align*}
Consequently,
\begin{align*}
\sum_{i,j\ge0}\alpha_{i+N,j+N}u^{i}v^{j}&=
(uv)^{-N}\sum_{s\ge N}\left(\frac {uv(1+v)} {1-v}\right)^s\frac {1} {1-v}\\
&=\left(\frac {1+v} {1-v}\right)^N \frac {1} {1-v-uv-uv^2}.
\end{align*}
We have shown that
$$
D(2j-i+1, N+i)=\coef{u^{i}v^{2j+1}}\left(\frac {1+v} {1-v}\right)^N
\frac {1} {1-v-uv-uv^2}.
$$
By $2$-section (in~$v$) of the series on the right-hand side, we
finally obtain
\begin{align} 
\notag
  D(2j-i+1, N+i)&=\coef{u^{i}v^{j}}\frac {1} {2\vv}
\left(\left(\frac {1+\vv} {1-\vv}\right)^N\frac {1} {1-\vv-u\vv-uv}\right.\\
\notag
&\kern3cm\left.
-\left(\frac {1-\vv} {1+\vv}\right)^N\frac {1} {1+\vv+u\vv-uv}\right)\\
\notag
&\kern-3cm
=\coef{u^{i}v^{j}}\frac {1} {2\vv(1 - v - 4 u v - u^2 v + u^2 v^2)}\\
&\kern-2.5cm
\cdot
\left(\left(\frac {1+\vv} {1-\vv}\right)^N\big(1-uv+\vv(1+u\big)
-\left(\frac {1-\vv} {1+\vv}\right)^N\big(1-uv-\vv(1+u\big)\right).
\label{eq:coef1}
\end{align}
We have reached our first intermediate goal to express the determinant
$D_1(k;n)$ in the form $\det B(n)$,  where $B(n)=(b_{i,j})_{0\le i,j\le n-1}$
with $b(u,v)=\sum_{i,j\ge0}b_{i,j}u^iv^j$ the double series on the
right-hand side of~\eqref{eq:coef1}. (Recall that $N=n-k$.)

Now we transform our determinant by multiplying $b(u,v)$ by
$(1-v)^{-N}$. (The latter is indeed a power series in~$v$ with
constant coefficient equal to~1.\footnote{This step was not necessary
  in~\cite{DiFran21} since there $N=n-k=0$.}) Thus we see that
$D_1(k;n)=\det C(n)$,  where $C(n)=(c_{i,j})_{0\le i,j\le n-1}$ with
\begin{multline*}
c(u,v)=\sum_{i,j\ge0}c_{i,j}u^iv^j=\frac {1} {2\vv(1 - v - 4 u v - u^2 v + u^2 v^2)}\\
\times
\left((1-\vv)^{-2N}\big(1-uv+\vv(1+u)\big)
-(1+\vv)^{-2N}\big(1-uv-\vv(1+u\big)\right).
\end{multline*}
We transform this series (and thus the corresponding matrix) by
performing the substitution $u\mapsto\frac {u} {(1-u)(1-2u)}$,
followed by multiplication by $\frac {1-2u^2} {(1-u)(1-2u)}$.%
\footnote{Di Francesco does this transformation in two steps.
First, he does the substitution $u\mapsto u\frac {1+u} {1-u}$ and he
multiplies the resulting generating function by $\frac {1+2u-u^2}
{1-u}$. (At this point, he has shown the equality of the domino
tilings partition function with the 20-vertex partition function.)
Subsequently, he does the substitution $u\mapsto \frac {u} {1-u}$,
which leads him to the determinant in~\eqref{eq:D3} with $x=0$.
(A subtlety is that he does not arrive exactly at the determinant
in~\eqref{eq:D3} but rather at the matrix that arises from ours by
dividing all entries in the $0$-th row  by~2 because his binomial
coefficient $\binom {i-1}{2j+1}$ must be interpreted as~0 for $i=0$ ---
as opposed to our convention concerning the binomial coefficient.)
We have combined these two steps here into one.}
As a result, we obtain that 
$D_1(k;n)=\det D(n)$,  where $D(n)=(d_{i,j})_{0\le i,j\le n-1}$ with
\begin{multline} \label{eq:d(u,v)}
d(u,v)=\sum_{i,j\ge0}d_{i,j}u^iv^j=\frac {1-2u^2}
  {2\vv\big((1-2u)^2-v\big)\big((1-u)^2-u^2v\big)}\\
\times
\left((1-\vv)^{-2N}\left((1-u)(1-2u)-uv+\vv
     (1-2u+2u^2)\right)\right.\\
\left.
  -(1+\vv)^{-2N}\left((1-u)(1-2u)-uv-\vv
     (1-2u+2u^2)\right)\right).
\end{multline}

\medskip
We turn our attention to the determinant in \eqref{eq:D3}.
We have
\begin{equation} \label{eq:binom} 
\sum_{j\ge0}\binom {x+i+j}jv^j=(1-v)^{-x-i-1}
\quad \text{and}\quad 
\sum_{j\ge0}\binom {x-i+j}jv^j=(1-v)^{-x+i-1},
\end{equation}
and therefore, again by a 2-section,
\begin{equation} \label{eq:GF1} 
\sum_{j\ge0}\binom {x+i+2j+1}{2j+1}v^{j}
=\frac {1} {2\vv}\left((1-\vv)^{-x-i-1}-(1+\vv)^{-x-i-1}\right)
\end{equation}
and
\begin{equation} \label{eq:GF2} 
\sum_{j\ge0}\binom {x-i+2j+1}{2j+1}v^{j}
=\frac {1} {2\vv}\left((1-\vv)^{-x+i-1}-(1+\vv)^{-x+i-1}\right).
\end{equation}
Consequently, we have 
$D_2(n;x)=\det E(n)$,  where $E(n)=(e_{i,j})_{0\le i,j\le n-1}$ with
\begin{align} \notag
e(u,v)&=\sum_{i,j\ge0}e_{i,j}u^iv^j=\frac {1} {2\vv}
\left((1-\vv)^{-x-1}\frac {1} {1-\frac {2u} {1-\vv}}
-(1+\vv)^{-x-1}\frac {1} {1-\frac {2u} {1+\vv}}\right.\\
\notag
&\kern2cm\left.
+(1-\vv)^{-x-1}\frac {1} {1-u(1-\vv)}
-(1+\vv)^{-x-1}\frac {1} {1-u(1+\vv)}
\right)\\
\notag
&=\frac {1} {2\vv}
\left((1-\vv)^{-x}\frac {1-2u+\vv} {(1-2u)^2-v}
-(1+\vv)^{-x}\frac {1-2u-\vv} {(1-2u)^2-v}\right.\\
&\kern2cm\left.
+(1-\vv)^{-x-1}\frac {1-u-u\vv} {(1-u)^2-u^2v}
-(1+\vv)^{-x-1}\frac {1-u+u\vv} {(1-u)^2-u^2v}
\right).
\label{eq:e(u,v)}
\end{align}

In order to explain the factor $\frac {1} {2}$ in~\eqref{eq:D2=D3}, we
now want to divide all entries in row~0 of the current matrix $E(n)$
by~2. In terms of generating functions, we achieve this by
subtracting {\it half\/} of the generating function for the entries in row~0,
$$
\frac {1} {2}\sum_{j\ge0}e_{0,j}v^j=
\frac {1} {2\vv}
\left((1-\vv)^{-x-1}
-(1+\vv)^{-x-1}\right),
$$
from $e(u,v)$. We are led to the conclusion that
$\frac {1} {2}D_2(n;x)=\det F(n)$,  where $F(n)=(f_{i,j})_{0\le i,j\le n-1}$ with
\begin{align*} 
f(u,v)&=\sum_{i,j\ge0}f_{i,j}u^iv^j=\frac {1} {2\vv}
\left((1-\vv)^{-x}\frac {1-2u+\vv} {(1-2u)^2-v}
-(1+\vv)^{-x}\frac {1-2u-\vv} {(1-2u)^2-v}\right.\\
&\kern2cm
+(1-\vv)^{-x-1}\frac {u(1-u-u\vv)(1-\vv)} {(1-u)^2-u^2v}\\
&\kern4cm\left.
-(1+\vv)^{-x-1}\frac {u(1-u+u\vv)(1+\vv)} {(1-u)^2-u^2v}
\right)\\
&=\frac {1} {2\vv}
\left((1-\vv)^{-x}\frac {1-2u+\vv} {(1-2u)^2-v}
-(1+\vv)^{-x}\frac {1-2u-\vv} {(1-2u)^2-v}\right.\\
&\kern2cm\left.
+(1-\vv)^{-x}\frac {u(1-u-u\vv)} {(1-u)^2-u^2v}
-(1+\vv)^{-x}\frac {u(1-u+u\vv)} {(1-u)^2-u^2v}
\right).
\end{align*}
One can now readily verify that $f(u,v)$ with $x=2N$ is equal to
$d(u,v)$ as given in~\eqref{eq:d(u,v)}. In view of $N=n-k$, this
establishes the relationship~\eqref{eq:D2=D3}. This completes the
proof of the theorem.
\end{proof}

Now we prove \eqref{eq:CK2}, restated again with a modified, but
equivalent, right-hand side.

\begin{theorem} \label{thm:D4}
For all positive integers $n$, we have
\begin{align} 
D_3(n;x) :&{=}
\det_{0\le i,j\le n-1}\left(2^i\binom {x+i+2j}{2j}+\binom {x-i+2j}{2j}\right)
\notag
\\
&{=}\ 
2^{\binom n2+1}\prod _{i=0} ^{n-1}\frac {i!} {(2i)!}
\prod _{i=0} ^{\fl{(n-1)/2}}(x+4i+3)_{n-2i-1}
\prod _{i=0} ^{\fl{(n-2)/2}}(x-2i+3n-1)_{n-2i-2}, \notag
\\
&{=}\ 2\prod_{i=1}^n \frac{2^{2i-2} \, \Gamma\bigl(i\bigr) \, \Gamma\bigl(2i+x\bigr) \,
  \Gamma\bigl(4i+x-3\bigr) \, \Gamma\bigl(\frac{3i+x-1}{2}\bigr)}{\Gamma\bigl(2i-1\bigr) \,
  \Gamma\bigl(3i+x-1\bigr) \, \Gamma\bigl(3i+x-2\bigr) \, \Gamma\bigl(\frac{i+x+1}{2}\bigr)},
\label{eq:D4}
\end{align}
where the binomial coefficients have to be interpreted
according to~\eqref{eq:bin}.
\end{theorem}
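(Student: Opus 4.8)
The plan is to prove Theorem~\ref{thm:D4} by exactly the route that established Theorem~\ref{thm:D3}: I would relate the even-section determinant $D_3(n;x)$ to the \emph{companion} Delannoy-number determinant evaluated in~\cite{CoHuKr23} (the ``related determinant evaluation'' mentioned in the introduction), rather than attempting a direct holonomic-Ansatz or constant-term attack. Concretely, I would isolate from~\cite{CoHuKr23} the companion evaluation, say of $\widehat D_1(k;n):=\det_{1\le i,j\le k}\bigl(D(2j-i-1,\,i+n-k-1)\bigr)$, whose first Delannoy argument is shifted by one relative to $D_1(k;n)$ so that the subsequent $2$-section extracts even powers instead of odd ones. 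The target is then a companion lemma, analogous to Lemma~\ref{lem:CK1}, of the shape $\widehat D_1(k;y+k)=\tfrac12 D_3(k;2y)$, from which the product form of~\eqref{eq:D4} follows by specializing $y=x/2$, $k=n$ and invoking the product formula for $\widehat D_1$ supplied by~\cite{CoHuKr23}.

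To prove that companion lemma I would run the generating-function and triangular-matrix calculus of Lemma~\ref{lem:CK1} verbatim, tracking only the sign changes forced by passing from an odd to an even $2$-section. On the Delannoy side, after shifting the row and column indices by~$1$ and applying $u\mapsto uv$, the entries again become coefficients of $1/(1-v-uv-uv^2)$; writing $N=n-k$ and extracting the factor $\bigl((1+v)/(1-v)\bigr)^N$ as in the derivation of~\eqref{eq:coef1}, the even $2$-section in~$v$ now produces a \emph{symmetric} combination $\tfrac12\bigl((\tfrac{1+\vv}{1-\vv})^N(\cdots)+(\tfrac{1-\vv}{1+\vv})^N(\cdots)\bigr)$ in place of the $1/(2\vv)$-weighted difference appearing in~\eqref{eq:coef1}. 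I would then multiply by $(1-v)^{-N}$, perform the substitution $u\mapsto\frac{u}{(1-u)(1-2u)}$ followed by multiplication by $\frac{1-2u^2}{(1-u)(1-2u)}$ exactly as before, and record the resulting series $\widehat d(u,v)$; the two quadratic denominators $\bigl((1-2u)^2-v\bigr)$ and $\bigl((1-u)^2-u^2v\bigr)$ should survive unchanged.

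On the binomial side I would replace the odd-section generating functions~\eqref{eq:GF1}--\eqref{eq:GF2} by their even-section counterparts $\sum_{j\ge0}\binom{x+i+2j}{2j}v^{j}=\tfrac12\bigl((1-\vv)^{-x-i-1}+(1+\vv)^{-x-i-1}\bigr)$ and the analogous formula with $i$ replaced by $-i$, assemble the generating function $e'(u,v)$ for $D_3(n;x)=\det E'(n)$, and subtract half of the row-$0$ generating function to pass to $\tfrac12 D_3(n;x)=\det F'(n)$, precisely as the factor~$\tfrac12$ in~\eqref{eq:D2=D3} was produced. Since the $u$-dependence of the entries is the same as in the $D_2$ case (the powers $2^i$ and the shifts $x\pm i$ expand identically in $u$), $e'(u,v)$ carries the same denominators as $e(u,v)$, only with altered numerators. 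The crux is then the purely algebraic verification that $F'(u,v)$ specialized at $x=2N$ coincides with $\widehat d(u,v)$: this is a single rational-function identity in $u$ and $\vv$ that must hold on the nose, and making it close is what pins down the correct shift in the first Delannoy argument of the companion determinant. I expect this matching to be the main obstacle, because the $+$ signs of the even section do not combine as cleanly as the $\vv$-antisymmetric differences of the odd case, so that a single misplaced shift or sign would break the identity; everything downstream is routine.

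Finally, once the companion lemma is established, I would convert the three stated forms of the right-hand side of~\eqref{eq:D4} into one another by elementary Pochhammer-to-$\Gamma$ manipulations (the duplication formula for the half-integer-argument $\Gamma$ factors, and telescoping of the $\prod\Gamma$ quotients into the finite products $\prod_i(x+4i+3)_{n-2i-1}$ and $\prod_i(x-2i+3n-1)_{n-2i-2}$), and check the cases $n=1,2,3$ numerically to fix all normalizing constants.
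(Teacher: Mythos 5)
Your route is essentially the paper's route: the paper also proves \eqref{eq:D4} by expressing the entries of $D_3(n;x)$ through the even $2$-section of the binomial series \eqref{eq:binom}, observing that the resulting generating function differs from \eqref{eq:e(u,v)} only in the prefactor ($\tfrac12$ instead of $\tfrac1{2\vv}$) and in the signs of the $(1+\vv)^{-x}$ terms, dividing row~$0$ by~$2$, and then running the triangular-matrix/substitution calculus of Lemma~\ref{lem:CK1} (in reverse direction, but that is immaterial) to land on the Delannoy matrix $\bigl(D(2j-i,\,y+i)\bigr)_{0\le i,j\le k-1}$ --- which is exactly the matrix you call $\widehat D_1(k;y+k)$ after re-indexing. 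So your identification of the correct companion matrix, including the shift in the first Delannoy argument, is right, and the rational-function matching you flag as the crux is the same verification the paper performs.

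The one genuine flaw is your reliance on a separate ``companion evaluation'' of $\widehat D_1(k;n)$ being supplied by~\cite{CoHuKr23}. The paper quotes only one evaluation from that reference, namely \eqref{eq:D1} for $D_1(k;n)$, and no second input is needed: since $D(a,b)=0$ whenever $a<0$ and $D(0,y)=1$, the first column of $\bigl(D(2j-i,\,y+i)\bigr)_{0\le i,j\le k-1}$ is $(1,0,\dots,0)^{T}$, so Laplace expansion along that column gives
\begin{equation*}
\det_{0\le i,j\le k-1}\bigl(D(2j-i,\,y+i)\bigr)
=\det_{0\le i,j\le k-2}\bigl(D(2j-i+1,\,y+i+1)\bigr)
=D_1(k-1;\,y+k).
\end{equation*}
Hence the companion lemma you want is simply $D_1(k-1;y+k)=\tfrac12 D_3(k;2y)$ (the paper's Lemma~\ref{lem:CK2}), and the product formula follows from the already-quoted \eqref{eq:D1} with $k$ replaced by $k-1$. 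As written, your proof is conditional on an external result whose exact form you cannot verify; this one-line column expansion removes that dependence and completes the argument. Everything else in your proposal (the even-section generating functions, the survival of the denominators $(1-2u)^2-v$ and $(1-u)^2-u^2v$, and the closing Pochhammer-to-$\Gamma$ manipulations) matches the paper and is sound.
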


Again, the theorem will, up to some routine manipulations, immediately follow
from a relation between the above determinant and the earlier
determinant $D_1(k;n)$ defined in~\eqref{eq:D1}.

\begin{lemma} \label{lem:CK2}
For all positive integers $k$, we have
\begin{equation} \label{eq:D2=D4} 
D_1(k-1;y+k)=\tfrac {1} {2}D_3(k;2y).
\end{equation}
\end{lemma}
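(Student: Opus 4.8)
The plan is to mirror the proof of Lemma~\ref{lem:CK1} --- Di~Francesco's generating-function method --- but to replace the odd $2$-section used there by the even one, since the binomials in $D_3$ carry $2j$ (rather than $2j+1$) in the bottom. In place of \eqref{eq:GF1}--\eqref{eq:GF2} I would use the even $2$-sections
$$
\sum_{j\ge0}\binom{x+i+2j}{2j}v^{j}=\tfrac12\bigl((1-\vv)^{-x-i-1}+(1+\vv)^{-x-i-1}\bigr),
$$
together with its companion obtained by $i\mapsto-i$, to write $D_3(n;x)$ as the determinant of a matrix $(e_{i,j})_{0\le i,j\le n-1}$ whose generating function $e(u,v)=\sum_{i,j}e_{i,j}u^iv^j$ is assembled from exactly the rational blocks $\frac{1-2u\pm\vv}{(1-2u)^2-v}$ and $\frac{1-u\mp u\vv}{(1-u)^2-u^2v}$ occurring in \eqref{eq:e(u,v)}, but now combined additively and with overall prefactor $\frac12$ rather than $\frac1{2\vv}$. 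On the Delannoy side I would use $D_1(k-1;y+k)=\det_{1\le i,j\le k-1}\bigl(D(2j-i,\,i+y)\bigr)$; since $\coef{u^iv^m}\bigl(\frac{1+v}{1-v}\bigr)^{N}\frac1{1-v-uv-uv^2}=D(m-i,N+i)$, taking the even part $m=2j$ with $N=y$ reproduces these entries and yields an even-type generating function of the same shape as $e(u,v)$.

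The factor $\tfrac12$ in \eqref{eq:D2=D4} arises, exactly as in Lemma~\ref{lem:CK1}, by halving row~$0$: since $\binom{x+2j}{2j}$ occurs twice when $i=0$, one has $e_{0,j}=2\binom{x+2j}{2j}$, so subtracting $\tfrac12\sum_j e_{0,j}v^j=\tfrac12\bigl((1-\vv)^{-x-1}+(1+\vv)^{-x-1}\bigr)$ from $e(u,v)$ produces $\tfrac12 D_3(k;x)$. The genuinely new feature --- and the reason the dimension drops from $k$ to $k-1$ --- is that the even section contributes at $j=0$ the \emph{constant} $\binom{x+i}{0}=1$, whereas the odd section of \eqref{eq:coef1} contributes the linear term $x+i+1$. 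Consequently, after row~$0$ is halved the $(0,0)$-entry equals $1$ while the rest of column~$0$ is $2^{i}+1$; clearing column~$0$ by the determinant-preserving row operations $\mathrm{row}_i\mapsto\mathrm{row}_i-(2^{i}+1)\,\mathrm{row}_0$ and expanding along it reduces $\tfrac12 D_3(k;x)$ to a $(k-1)\times(k-1)$ determinant, so that the two sides of \eqref{eq:D2=D4} finally have equal size.

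With the sizes reconciled, I would run the very same substitutions as in Lemma~\ref{lem:CK1} --- multiplication by $(1-v)^{-N}$, then $u\mapsto\frac{u}{(1-u)(1-2u)}$ followed by multiplication by $\frac{1-2u^2}{(1-u)(1-2u)}$ --- on the even Delannoy generating function, producing the even analogue of $d(u,v)$ in \eqref{eq:d(u,v)} (the same rational prefactors, but with $(1-\vv)^{-2N}$ and $(1+\vv)^{-2N}$ entering with a plus sign). Since every one of these operations is in $u$ or in $v$, it leaves the even-type shape intact. The proof then comes down to a rational-function identity between this even analogue of $d(u,v)$ and the generating function governing the reduced $\tfrac12 D_3(k;2y)$: setting $N=y$ and $x=2y$, the four blocks match termwise, exactly as in the closing step of Lemma~\ref{lem:CK1}.

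The hard part will be the dimension bookkeeping. In Lemma~\ref{lem:CK1} the two determinants already had the common size $k$, whereas here the even section manufactures one extra, trivial degree of freedom that must be stripped off before the generating-function identity can take over. Making this precise --- checking that halving row~$0$ and clearing column~$0$ leaves \emph{exactly} the even Delannoy matrix $\bigl(D(2j-i,\,i+y)\bigr)_{1\le i,j\le k-1}$ with the correct parameter $N=y$ --- is where I expect the real effort to lie. By contrast, once the reduction is set up correctly, the substitution chain and the final four-block rational identity should be as mechanical as in the earlier proof.
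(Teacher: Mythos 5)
Your overall strategy --- even $2$-sections, halving row~0 to produce the factor $\tfrac12$, and running the substitution chain of Lemma~\ref{lem:CK1} to connect with a Delannoy determinant --- is exactly the paper's, and your even-section formula and your identification of row~0 and column~0 of the $D_3$-matrix are correct. The genuine gap lies in where you perform the dimension drop and in the final matching step. You clear column~0 on the $D_3$ side by the row operations $\mathrm{row}_i\mapsto\mathrm{row}_i-(2^i+1)\,\mathrm{row}_0$ and expand, obtaining a $(k-1)\times(k-1)$ matrix with entries $2^i\binom{x+i+2j}{2j}+\binom{x-i+2j}{2j}-(2^i+1)\binom{x+2j}{2j}$. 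This matrix no longer has the four-rational-block generating function, so the concluding claim that ``the four blocks match termwise'' has nothing to match against: the even analogue of $d(u,v)$ that you produce by the substitution chain is the generating function of the \emph{size-$k$} matrix $\bigl(D(2j-i,y+i)\bigr)_{0\le i,j\le k-1}$, not of your reduced size-$(k-1)$ matrix. Relatedly, your description of the Delannoy side is off: the matrix computing $D_1(k-1;y+k)$ is $\bigl(D(2j-i,i+y)\bigr)_{1\le i,j\le k-1}$, and once it is re-indexed from $(0,0)$ --- as the generating-function calculus requires --- its entries are $D(2j-i+1,y+i+1)$, i.e., it is of \emph{odd} type (the type occurring in Lemma~\ref{lem:CK1}, with shifted parameter), not of the even type you assert.

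The fix, which is the paper's proof, is to defer the dimension drop to the Delannoy side: run the transformations of Lemma~\ref{lem:CK1} in reverse on the halved $D_3$-matrix (no row operations at all), landing on $\tfrac12 D_3(k;2y)=\det_{0\le i,j\le k-1}\bigl(D(2j-i,y+i)\bigr)$; there column~0 is automatically $(1,0,\dots,0)^T$, because $D(m,n)=0$ for $m<0$ while $D(0,y)=1$, so Laplace expansion along column~0 immediately gives $\det_{0\le i,j\le k-2}\bigl(D(2j-i+1,y+i+1)\bigr)=D_1(k-1;y+k)$. In other words, the ``one trivial degree of freedom'' you try to strip off by hand on the binomial side comes off for free on the lattice-path side, \emph{after} (not before) the generating-function transformations; doing it beforehand destroys precisely the structure your final matching step needs.
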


\begin{proof}
Here we start with $D_3(n;x)$. From 2-section of the binomial
series in~\eqref{eq:binom}, we obtain that
$D_3(n;x)=\det H(n)$,  where $H(n)=(h_{i,j})_{0\le i,j\le n-1}$ with
\begin{align*} 
h(u,v)&=\sum_{i,j\ge0}h_{i,j}u^iv^j=\frac {1} {2}
\left((1-\vv)^{-x-1}\frac {1} {1-\frac {2u} {1-\vv}}
+(1+\vv)^{-x-1}\frac {1} {1-\frac {2u} {1+\vv}}\right.\\
&\kern2cm\left.
+(1-\vv)^{-x-1}\frac {1} {1-u(1-\vv)}
+(1+\vv)^{-x-1}\frac {1} {1-u(1+\vv)}
\right)\\
&=\frac {1} {2}
\left((1-\vv)^{-x}\frac {1-2u+\vv} {(1-2u)^2-v}
+(1+\vv)^{-x}\frac {1-2u-\vv} {(1-2u)^2-v}\right.\\
&\kern2cm\left.
+(1-\vv)^{-x-1}\frac {1-u-u\vv} {(1-u)^2-u^2v}
+(1+\vv)^{-x-1}\frac {1-u+u\vv} {(1-u)^2-u^2v}
\right).
\end{align*}
It should be noted that the only differences with \eqref{eq:e(u,v)}
are that, here, the prefactor is $\frac {1} {2}$ instead of $\frac {1}
{2\vv}$, and that there are plus-signs in front of the terms involving
$(1+\vv)^{-x}$. Hence, if we proceed from here as in the proof of
Lemma~\ref{lem:CK1} --- that is, we divide the 0-th row of $H(n)$
by~2, and then do the transformations described in the proof of
Lemma~\ref{lem:CK1} ``in reverse" --- then we obtain
$$
\frac {1} {2}D_3(k;2y)=\det_{0\le i,j\le k-1}
\big(D(2j-i,y+i)\big).
$$
Here we see that
all entries in column~$0$ of the last matrix are zero except for the
entry in row~$0$ which is equal to $D(0,y)=1$. By expanding the
determinant of this matrix along the first column, we see that
$$
\frac {1} {2}D_3(k;2y)=\det_{0\le i,j\le k-2}
\big(D(2j-i+1,y+i+1)\big)=D_1(k-1;y+k).
$$
This is exactly \eqref{eq:D2=D4}.
\end{proof}

\section{Variations on the theme, I}
\label{sec:var1}

There exist numerous variations of Theorem \ref{thm:DiFran} in which
the exponent in the exponential~$2^i$ is shifted. In this section, we
report our corresponding findings. Let
\[
  D_{\alpha,\beta,\gamma,\delta}(n) :=
  \det_{0\leq i,j\le n-1}\left(
  2^{i+\beta}\binom{i+2j+\gamma}{2j+\alpha}
  +\binom{-i+2j+\delta}{2j+\alpha}\right).
\]
Note that, in this notation, the determinants from
Theorems~\ref{thm:D3} and~\ref{thm:D4} read
\begin{align}
  D_2(n;x) &= D_{1,0,x+1,x+1}(n),\\
  D_3(n;x) &= D_{0,0,x,x}(n),
\end{align}
respectively.

In an automated search in the parameter space $-6\leq\alpha,\beta\leq9$ and
$-9\leq\gamma,\delta\leq9$, we have identified $26$ cases of determinants that
factor completely, and which are not special instances of $D_2(n;x)$ or
$D_3(n;x)$. All of these 26 cases can be proven automatically by the holonomic
Ansatz, but some of them can also easily be related to each other.

\begin{theorem}\label{thm:det22}
The following determinant evaluations hold for all $n\geq1$:
\begin{align}
  D_{-2, 0, -1, -1}(n) &= -2\prod_{i=2}^{n} \frac{8(2i-3)(2i-1)\,\Gamma\bigl(4i-5\bigr)\,
    \Gamma\bigl(\frac{i+1}{2}\bigr)}{i\,\Gamma\bigl(3i-2\bigr)\,
    \Gamma\bigl(\frac{3i-3}{2}\bigr)}, \label{eq:det22a} 
  \\
  D_{0, 2, 3, -1}(n) &= \prod_{i=1}^{n} \frac{3(2i-1)\,\Gamma\bigl(4i+3\bigr)\,
    \Gamma\bigl(\frac{i+1}{2}\bigr)}{4(i+2)\,\Gamma\bigl(3i+1\bigr)\,
    \Gamma\bigl(\frac{3i+5}{2}\bigr)}, \label{eq:det22b} 
  \\
  D_{1, 1, 0, -2}(n) &= -2\prod_{i=1}^{n} \frac{(2i-1)\,\Gamma\bigl(4i-3\bigr)\,
    \Gamma\bigl(\frac{i}{2}\bigr)}{2\,\Gamma\bigl(3i-2\bigr)\,
    \Gamma\bigl(\frac{3i}{2}\bigr)} ,\label{eq:det22c} 
  \\
  D_{1, 1, 1, -1}(n) &= \prod_{i=1}^{n} \frac{\Gamma\bigl(4i-1\bigr)\,
    \Gamma\bigl(\frac{i+1}{2}\bigr)}{\,\Gamma\bigl(3i\bigr)\,
    \Gamma\bigl(\frac{3i-1}{2}\bigr)}, \label{eq:det22d} 
  \\
  D_{2, 1, 2, 0}(n) &= \prod_{i=1}^{n} \frac{\Gamma\bigl(4i\bigr)\,
    \Gamma\bigl(\frac{i+2}{2}\bigr)}{\,\Gamma\bigl(3i\bigr)\,
    \Gamma\bigl(\frac{3i+2}{2}\bigr)}, \label{eq:det22e} 
  \\
  D_{0, 1, 1, -1}(n) &= 3\prod_{i=2}^{n} \frac{\Gamma\bigl(4i\bigr)\,
    \Gamma\bigl(\frac{i-1}{2}\bigr)}{\,\Gamma\bigl(3i+1\bigr)\,
    \Gamma\bigl(\frac{3i-3}{2}\bigr)}. \label{eq:det22f} 
\end{align}
Moreover, some related determinants can be expressed in terms of these;
the following identities hold (at least) for all $n\geq4$:
\begin{align}
  D_{2, 1, 2, 0}(n) &= 
  \frac18 \, D_{1, 1, -1, -3}(n+1) = 
  \frac{1}{40} \, D_{0, 1, -4, -6}(n+2) = 
  -\frac{1}{24576} \, D_{1, 2, -4, -8}(n+2), 
  \label{eq:det22g}
  \\
  D_{1, 1, 1, -1}(n) &= 
  D_{2, 1, 1, -1}(n) = 
  \frac13 \, D_{0, 1, -2, -4}(n+1) = 
  -\frac{1}{32} \, D_{1, 1, -2, -4}(n+1) \notag \\ &= 
  -\frac{1}{224} \, D_{1, 2, -2, -6}(n+1) = 
  -\frac{1}{168} \, D_{0, 1, -5, -7}(n+2) \notag \\ &= 
  -\frac{1}{3696} \, D_{0, 2, -5, -9}(n+2) = 
  -\frac{1}{337920} \, D_{1, 2, -5, -9}(n+2), 
  \label{eq:det22h}
  \\
  D_{1, 1, 0, -2}(n) &= \frac15 \, D_{0, 1, -3, -5}(n+1) = \frac{1}{1008} \, D_{1, 2, -3, -7}(n+1),
  \label{eq:det22i}
  \\
  D_{-2, 1, 0, -2}(n) &= D_{0, 2, 3, -1}(n-1), 
  \label{eq:det22j}
  \\
  D_{2, 1, 1, -1}(n) &= D_{4, 2, 4, 0}(n-1), 
  \label{eq:det22k}
  \\
  D_{1, 1, -2, -4}(n) &= -\frac{16}{5} \, D_{3, 2, 1, -3}(n-1) = 
  \frac{64}{3} \, D_{5, 3, 4, -2}(n-2) = -128 \, D_{7, 4, 7, -1}(n-3), 
  \label{eq:det22l}
  \\
  D_{1, 1, -1, -3}(n) &= -4 \, D_{3, 2, 2, -2}(n-1) = 16 \, D_{5, 3, 5, -1}(n-2), 
  \label{eq:det22m}
  \\
  D_{1, 1, 0, -2}(n) &= -2 \, D_{3, 2, 3, -1}(n-1).
  \label{eq:det22n}
\end{align}
\end{theorem}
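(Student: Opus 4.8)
The plan is to split Theorem~\ref{thm:det22} into two parts and treat them separately: the six ``sporadic'' closed-form evaluations \eqref{eq:det22a}--\eqref{eq:det22f}, and the relational identities \eqref{eq:det22g}--\eqref{eq:det22n} that reduce a further collection of determinants to these six. For the first part I would apply the holonomic Ansatz of Section~\ref{sec:holonom} verbatim; for the second I would use elementary determinant manipulations together with the closed forms just established. I first note that the entries $2^{i+\beta}\binom{i+2j+\gamma}{2j+\alpha}+\binom{-i+2j+\delta}{2j+\alpha}$ depend only on $i$ and $j$ (and the fixed shift parameters), not on~$n$, so that $D_{\alpha,\beta,\gamma,\delta}(n-1)$ is an upper-left submatrix of $D_{\alpha,\beta,\gamma,\delta}(n)$, as the method requires.

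For each of \eqref{eq:det22a}--\eqref{eq:det22f}, the entries are of exactly the same holonomic type as those treated in Theorem~\ref{thm:DiFran}, so the same pipeline applies. Concretely, I would (i)~compute the data $c_{n,j}$ from~\eqref{cnj} for a suitable range of $(n,j)$; (ii)~guess a Gr\"obner basis of recurrences generating the annihilator ideal $\mathfrak{I}$ of $c_{n,j}$; (iii)~prove~\eqref{H1} by specializing $\mathfrak{I}$ to the diagonal to obtain a univariate recurrence for $\tilde c_{n,n-1}$, checking that it carries the right factor forcing constant solutions and matching the initial values; (iv)~prove~\eqref{H2} by creative telescoping, reducing $\sum_j a_{i,j}\tilde c_{n,j}=0$ to a finite set of initial values, taking care of the exceptional $(i,n)$ where the leading coefficients vanish; and (v)~prove~\eqref{H3} by creative telescoping to obtain a recurrence for the quotient $D_{\alpha,\beta,\gamma,\delta}(n)/D_{\alpha,\beta,\gamma,\delta}(n-1)$ and verifying that the ratio of consecutive values of the conjectured product satisfies it, together with enough initial values. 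Since each conjectured right-hand side is a product of Gamma factors, its consecutive ratio is hypergeometric, so the final step is a routine right-division of operators followed by a finite numerical check, just as in the proof of Theorem~\ref{thm:DiFran}.

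For the relational identities \eqref{eq:det22g}--\eqref{eq:det22n} I see two routes. The quickest is to observe that, once each determinant occurring on either side has been evaluated in closed form by the machinery above, every such identity collapses to an elementary Gamma-function identity that can be checked termwise. The more structural route --- in the spirit of the proofs of Lemmas~\ref{lem:CK1} and~\ref{lem:CK2} --- is to realize each relation by row and column operations on the underlying matrices. The pure equalities such as \eqref{eq:det22j}, \eqref{eq:det22k}, and \eqref{eq:det22n} relate a determinant of size~$n$ to one of size $n-1$ with shifted parameters; I would prove these by applying column (or row) operations to the larger matrix so as to produce a column that is all zeros except for a single entry, and then expanding along that column exactly as in the proof of Lemma~\ref{lem:CK2}. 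The chains in \eqref{eq:det22g}, \eqref{eq:det22h}, \eqref{eq:det22l}, and \eqref{eq:det22m}, which link several parameter shifts with explicit scalar prefactors, would be handled by iterating such bordering reductions, the prefactors arising as the products of the pivots picked up at each expansion.

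The main obstacle is computational rather than conceptual. On the holonomic side, the guessed recurrences and the creative-telescoping certificates are large, and the delicate point is to enumerate correctly the finite exceptional set of initial conditions --- those forced both by the supports of the recurrences and by the vanishing of their leading coefficients --- and then to verify all of them. On the relational side, if one insists on the structural route, the difficulty is to pin down, for each identity, the precise sequence of row and column operations (equivalently, the generating-function substitution) that realizes the stated parameter and dimension shift. This difficulty is bypassed entirely by the closed-form comparison, which is why I would carry out the holonomic evaluations of the base cases first and then use them, via \emph{termwise Gamma-function identities}, to settle \eqref{eq:det22g}--\eqref{eq:det22n}.
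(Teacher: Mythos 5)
Your proposal is correct, and for most of the theorem it coincides with the paper's proof: identities \eqref{eq:det22a}--\eqref{eq:det22f} are established by the holonomic Ansatz exactly as in Theorem~\ref{thm:DiFran}, and identities \eqref{eq:det22g}--\eqref{eq:det22i} are obtained, just as in your primary route, by proving a closed-form evaluation for every determinant occurring in them and then comparing the resulting products. Where you genuinely diverge is in \eqref{eq:det22j}--\eqref{eq:det22n}. The paper settles these with no computation at all, via one observation that your proposal does not make explicit: the parameter shift $(\alpha,\beta,\gamma,\delta)\mapsto(\alpha+2,\beta+1,\gamma+3,\delta+1)$ acts on the entries exactly as the index shift $(i,j)\mapsto(i+1,j+1)$, so in each of these identities the left-hand matrix contains the right-hand matrix as its lower-right block; since the complementary entries in the first few rows (or columns) vanish, the matrix is block-triangular, and its determinant equals $\det(C)$ times the smaller determinant, where $C$ is a fixed $1\times1$, $2\times2$, or $3\times3$ corner block whose determinant supplies the stated scalar prefactors. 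Your primary route (closed forms on both sides, then termwise Gamma-function comparison) would also prove these, but at the cost of many additional heavy holonomic computations, each of which depends on successful guessing and on nonsingularity of all leading principal minors; the block argument needs none of that and is not conditional on any computer-algebra step succeeding.

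One caveat on your fallback ``structural'' route: it is essentially the paper's block argument, but it can only realize parameter shifts of the special form above. The relations \eqref{eq:det22g}--\eqref{eq:det22i} are \emph{not} of this form --- for instance, in \eqref{eq:det22g} the shift from $D_{1,1,-1,-3}(n+1)$ to $D_{2,1,2,0}(n)$ is not $(\alpha+2,\beta+1,\gamma+3,\delta+1)$, and in \eqref{eq:det22h} the identity $D_{1,1,1,-1}(n)=D_{2,1,1,-1}(n)$ even relates two matrices of the same size --- so ``iterating bordering reductions'' cannot reach them. Indeed, the paper shows after the theorem that proving even one such relation directly requires conjugation by the nontrivial triangular matrices $L_n$ and $R_n$ together with a telescoping binomial identity like \eqref{eq:sumid}, not a mere Laplace expansion. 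Thus your instinct to make the closed-form comparison primary is the right call for \eqref{eq:det22g}--\eqref{eq:det22i}, while for \eqref{eq:det22j}--\eqref{eq:det22n} the block-structure observation is both simpler and more robust.
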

\begin{proof}
Identities \eqref{eq:det22a}--\eqref{eq:det22i} can be proven by the holonomic
Ansatz. More precisely, we prove a closed-form evaluation for each of the
mentioned determinants, similar to those in
\eqref{eq:det22a}--\eqref{eq:det22f}, but find that some of these are related
to each other. In order to make these relations explicit, and in order to save
some space, we display in \eqref{eq:det22g}--\eqref{eq:det22i} only the
relations, not the closed forms themselves. The detailed proofs can be found
in the accompanying electronic material~\cite{EM}, some computational data
are given in Table~\ref{tab:comp}.

Identities \eqref{eq:det22j}--\eqref{eq:det22n} can easily be established by
exploiting the structure of the corresponding matrices: the matrices of the
determinants on the right-hand sides take the block form
$\begin{psmallmatrix} C & 0 \\ \ast & A \end{psmallmatrix}$ or
$\begin{psmallmatrix} C & \ast \\ 0 & A \end{psmallmatrix}$
where in each case $C$ is a fixed matrix of dimension $1\times1$, or
$2\times2$, or $3\times3$, and where $A$ is the matrix of the
determinant on the corresponding right-hand side. The latter follows from the
fact that the transformation
$(\alpha,\beta,\gamma,\delta)\mapsto(\alpha+2,\beta+1,\gamma+3,\delta+1)$ is
equivalent to shifting $(i,j)\mapsto(i+1,j+1)$.
\end{proof}

By looking at \eqref{eq:det22g}--\eqref{eq:det22i} one is tempted to
prove these relations directly, without taking the detour via the
closed-form evaluations. We demonstrate with one example how this can work.
Let $L_n$ be the lower-triangular $(n\times n)$-matrix with
entries $2^{i-j+1}-1$ and $R_n$ be the
$(n\times n)$-matrix with $1$'s on the main
diagonal, $-1$'s on the upper diagonal, and $0$ elsewhere, i.e.,
\[
  L_n := \begin{pmatrix}
    1 & 0 & 0 & 0 & \cdots \\ 
    3 & 1 & 0 & 0 & \\
    7 & 3 & 1 & 0 & \\
    15 & 7 & 3 & 1 & \\
    \smash{\vdots} & & & & \smash{\ddots}
  \end{pmatrix}
  \qquad\text{and}\qquad
  R_n := \begin{pmatrix}
    1 & -1 & 0 & 0 & \cdots \\
    0 & 1 & -1 & 0 & \\
    0 & 0 & 1 & -1 & \\
    0 & 0 & 0 & 1 & \smash{\ddots} \\
    \smash{\vdots} & & & & \smash{\ddots}
  \end{pmatrix}.
\]
Then, for example, we claim that
\begin{equation}\label{eq:LR}
  L_n\cdot A_{2, 1, 2, 0}(n)\cdot R_n =
  \begin{pmatrix} 2 & 0 \\ \ast & A_{5, 3, 5, -1}(n-1) \end{pmatrix},
\end{equation}
where $A_{\alpha,\beta,\gamma,\delta}(n)$ denotes the matrix from the definition
of $D_{\alpha,\beta,\gamma,\delta}(n)$. Equation~\eqref{eq:LR}
immediately implies the identity
$D_{2, 1, 2, 0}(n) = 2 \, D_{5, 3, 5, -1}(n-1)$, which has already been stated
(implicitly; it is obtained by combining \eqref{eq:det22g}
with~\eqref{eq:det22m}). It remains to show~\eqref{eq:LR}, which boils
down to proving the binomial sum identity
\begin{align}
\notag
  \sum_{k=0}^i \biggl[ & (1 - 2^{i-k+1}) \left(\binom{-1+2j-k}{2j+2} - \binom{1+2j-k}{2j+4}\right) \\
\notag
    &\kern2cm
    + (2^{k+2} - 2^{i+3}) \left(\binom{3+2j+k}{2j+2} - \binom{5+2j+k}{2j+4}\right) \biggr] \\
  &\kern.5cm
  = \binom{-1-i+2j}{2j+5} + 2^{i+3} \binom{5+i+2j}{2j+5}.
\label{eq:sumid}
\end{align}
This can be achieved by observing that
$$
\binom {a+k}b=\binom {a+k+1}{b+1}-\binom {a+k}{b+1}
$$
and\footnote{We found this relation by means of Gosper's
algorithm~\cite{GospAB}, using the implementation~\cite{PaScAA}.}
$$
2^k\left(\binom {a+k}b-\binom {a+k+2}{b+2}\right)
=
-2^{k+1}\binom {a+k+1}{b+2}+2^k\binom {a+k}{b+2},
$$
so that all sums in \eqref{eq:sumid} are telescoping sums.
The identity can also be automatically proved by
Zeilberger's algorithm~\cite{Zeilberger90a,PetkovsekWilfZeilberger96}. 

\medskip
Similarly to \eqref{eq:det22g}--\eqref{eq:det22n} one can connect the
determinants from Theorem~\ref{thm:det22} to the determinants $D_2(n;x)$ and
$D_3(n;x)$, whose evaluations have already been proven in Theorems~\ref{thm:D3}
and~\ref{thm:D4}, respectively.
\begin{corollary}
  The following identities hold for all integers $n\geq2$:
  \begin{align}
    2 \, D_{1, 1, 1, -1}(n) &= D_3(n+1;-2) = D_3(n;1) = D_2(n;0) = D_2(n-1;3), \\
    2 \, D_{2, 1, 2, 0}(n) &= D_3(n+1;-1) = D_2(n;1), \\
    -D_{1, 1, 0, -2}(n) &= D_3(n;0) = D_2(n-1;2).
  \end{align}
\end{corollary}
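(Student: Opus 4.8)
The plan is to read off each asserted identity from the closed-form evaluations that are already in hand: $D_2(n;x)$ and $D_3(n;x)$ are evaluated in Theorems~\ref{thm:D3} and~\ref{thm:D4}, while $D_{1,1,0,-2}(n)$, $D_{1,1,1,-1}(n)$, and $D_{2,1,2,0}(n)$ are evaluated in~\eqref{eq:det22c}--\eqref{eq:det22e}. Every equality in the corollary then becomes an identity between two explicit products, into which I would substitute the indicated parameters together with the shifts $n\mapsto n\pm1$, and simplify. To avoid the spurious poles that the Gamma-form evaluations in~\eqref{eq:D3} and~\eqref{eq:D4} develop at negative integer~$x$ (for instance $x=-2,-1$), I would work with the manifestly polynomial Pochhammer forms on the second lines of~\eqref{eq:D3} and~\eqref{eq:D4}; since both the determinant and this product are polynomials in~$x$, substitution of a specific integer value is legitimate.

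For several of the equalities there is a cleaner, purely structural shortcut that parallels the proof of~\eqref{eq:det22j}--\eqref{eq:det22n}. The key tool is the shift identity recorded after Theorem~\ref{thm:det22}: the $(i,j)$-entry of $A_{\alpha+2,\beta+1,\gamma+3,\delta+1}$ equals the $(i+1,j+1)$-entry of $A_{\alpha,\beta,\gamma,\delta}$. First I would observe, using the convention~\eqref{eq:bin}, that the $0$-th row of the matrix $A_{0,0,-2,-2}(n+1)$ (whose determinant is $D_3(n+1;-2)$) equals $(2,0,\dots,0)$, and that likewise the $0$-th row of $A_{0,0,-1,-1}(n+1)$ equals $(2,0,\dots,0)$. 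Laplace expansion along this row, followed by the shift identity applied to the complementary minor, yields $D_3(n+1;-2)=2\,D_{2,1,1,-1}(n)$ and $D_3(n+1;-1)=2\,D_{2,1,2,0}(n)$. Invoking $D_{2,1,1,-1}(n)=D_{1,1,1,-1}(n)$ from~\eqref{eq:det22h} then establishes the first equalities $2\,D_{1,1,1,-1}(n)=D_3(n+1;-2)$ and $2\,D_{2,1,2,0}(n)=D_3(n+1;-1)$ combinatorially, without touching the products; the analogous $0$-th row $(-2,0,\dots,0)$ of $A_{1,1,0,-2}(n)$ recovers~\eqref{eq:det22n}.

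The remaining equalities are the genuinely cross-family ones relating a value of~$D_3$ to a value of~$D_2$ --- for instance $D_3(n;1)=D_2(n;0)$, $D_3(n+1;-1)=D_2(n;1)$, and $D_3(n;0)=D_2(n-1;2)$ --- together with the within-$D_2$ shift $D_2(n;0)=D_2(n-1;3)$. Here the shift identity is of no use, because the two families differ in the parameter~$\alpha$, that is, in whether the lower index of the binomials is even ($2j$, in $D_3$) or odd ($2j+1$, in $D_2$), and this parity cannot be altered by integer shifts of $(i,j)$. For these I would compute directly with the Pochhammer forms: form the quotient of the two products and verify that it collapses to~$1$ once the change in the product ranges induced by $n\mapsto n\pm1$ is taken into account.

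The main obstacle is bookkeeping rather than ideas. The Pochhammer products in~\eqref{eq:D3} and~\eqref{eq:D4} involve the floor functions $\fl{n/2}$, $\fl{(n-1)/2}$, and $\fl{(n-2)/2}$, so matching two such products against each other --- and, where needed, against the half-integer Gamma arguments of~\eqref{eq:det22c}--\eqref{eq:det22e} --- forces a case distinction according to the parity of~$n$ together with repeated use of the Legendre duplication formula. I expect the cross-family equalities of the third paragraph, where the two products have genuinely different shapes, to demand the most care; the structural shortcut of the second paragraph is precisely what lets one sidestep the singular behaviour of the Gamma forms at $x=-2,-1$, and I would lean on it wherever a degenerate $0$-th row is available.
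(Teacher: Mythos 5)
Your proposal is correct and matches the paper's (implicit) argument: the corollary is stated right after the remark that, ``similarly to \eqref{eq:det22g}--\eqref{eq:det22n}'', these determinants can be connected to $D_2(n;x)$ and $D_3(n;x)$, i.e.\ precisely the mix you describe of degenerate-row/shift arguments (Laplace expansion of $A_{0,0,-2,-2}(n+1)$ and $A_{0,0,-1,-1}(n+1)$ along their $0$-th rows $(2,0,\dots,0)$, then the shift $(\alpha,\beta,\gamma,\delta)\mapsto(\alpha+2,\beta+1,\gamma+3,\delta+1)$) combined with comparison of the already-proven Pochhammer-form evaluations for the cross-family equalities where the parity of the lower binomial index genuinely differs. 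Your caution about avoiding the Gamma forms at $x=-2,-1$ in favour of the polynomial Pochhammer forms is exactly the right way to make the substitution of negative integer values legitimate.
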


\section{Variations on the theme, II}
\label{sec:var2}

In this section we present --- and prove --- several determinant evaluations
in which, compared with Theorem~\ref{thm:DiFran}, the power $2^i$ gets
replaced by~$3^i$, and the terms $2j$ in the binomials are replaced by~$3j$.
As it turns out, there are even more variations of Theorem~\ref{thm:DiFran}
associated with the modulus~3 if one also shifts the exponent in the
exponential~$3^i$. For brevity, let us denote
\[
  E_{\alpha,\beta,\gamma,\delta}(n) :=
  \det_{0\leq i,j\le n-1}\left(3^{i+\beta} \binom{i+3j+\gamma}{3j+\alpha}
  +\binom{-i+3j+\delta}{3j+\alpha}\right).
\]
An automated search in the parameter space
\begin{multline*}
   \bigl\{(\alpha,\beta,\gamma,\delta) : -6\leq\alpha,\beta\leq6
  \text{ and } {-8}\leq\gamma,\delta\leq8\bigr\} \\
  {} \cup {} 
  \bigl\{(\alpha,\beta,\gamma,\delta) : 6\leq\alpha\leq10
  \text{ and } 0\leq\beta\leq10 \text{ and } {-10}\leq\gamma,\delta\leq10\bigr\}
\end{multline*}
delivered $26$ cases of determinants that factor completely. All of these
$26$~cases can be proven automatically by the holonomic Ansatz (see the
accompanying electronic material~\cite{EM}), but some of them can also easily
be related to each other.

Finally, we have discovered three parametric families of determinant
evaluations of this kind, in addition to the other, (seemingly) sporadic ones.
The parametric families are presented in Conjecture~\ref{conj:det33x} below. Here,
it seems difficult to apply the holonomic Ansatz, but purely because of the
computational complexity that is added by the parameter~$x$.  We are
absolutely convinced that it should work in principle, since we observed that
it works for specific values of~$x$ without much difficulty.
We admit that we do not
know a different method that would work here.

\begin{theorem}\label{thm:det33}
The following determinant evaluations hold for all $n\geq1$:
\begin{align}
E_{-3, 0, -1, -1}(n) &= 2\prod_{i=2}^{n} \frac{2^{i+1}(2i-1)\,\Gamma\bigl(4i-5\bigr)\,\Gamma\bigl(\frac{i+2}{3}\bigr)}{i(i+1)\,\Gamma\bigl(3i-5\bigr)\,\Gamma\bigl(\frac{4i-1}{3}\bigr)}, 
\label{eq:det33a} \\
E_{-3, 1, 0, -2}(n) &= -2\prod_{i=2}^{n} \frac{2^{i+1}(2i-1)\,\Gamma\bigl(4i-4\bigr)\,\Gamma\bigl(\frac{i}{3}\bigr)}{i(i+1)^2\,\Gamma\bigl(3i-5\bigr)\,\Gamma\bigl(\frac{4i-3}{3}\bigr)}, 
\label{eq:det33b} \\
E_{0, 3, 5, -1}(n) &= \prod_{i=1}^{n} \frac{2^{i+1}(3i-2)(3i-1)\,\Gamma\bigl(4i+4\bigr)\,\Gamma\bigl(\frac{
i+2}{3}\bigr)}{(i+1)(i+2)(i+3)(i+4)\,\Gamma\bigl(3i+1\bigr)\,\Gamma\bigl(\frac{4i+5}{3}\bigr)}, 
\label{eq:det33c} \\
E_{0, 1, 1, -1}(n) &= \prod_{i=1}^{n} \frac{2^{i+1}\,\Gamma\bigl(4i-2\bigr)\,\Gamma\bigl(\frac{i+2}{3}\bigr)}{i\,\Gamma\bigl(3i-2\bigr)\,\Gamma\bigl(\frac{4i-1}{3}\bigr)}, 
\label{eq:det33d} \\
E_{1, 1, 2, 0}(n) &= \prod_{i=1}^{n} \frac{2^i\,\Gamma\bigl(4i\bigr)\,\Gamma\bigl(\frac{i+1}{3}\bigr)}{3i\,\Gamma\bigl(3i-1\bigr)\,\Gamma\bigl(\frac{4i+1}{3}\bigr)}, 
\label{eq:det33e} \\
E_{3, 2, 3, -1}(n) &= \prod_{i=1}^{n} \frac{2^i\,\Gamma\bigl(4i+1\bigr)\,\Gamma\bigl(\frac{i+2}{3}\bigr)}{\,\Gamma\bigl(3i+1\bigr)\,\Gamma\bigl(\frac{4i+2}{3}\bigr)}, 
\label{eq:det33f} \\
E_{1, 0, 1, 1}(n) &= 2\prod_{i=1}^{n} \frac{2^{i-2}\,\Gamma\bigl(4i-1\bigr)\,\Gamma\bigl(\frac{i}{3}\bigr)}{3\,\Gamma\bigl(3i-1\bigr)\,\Gamma\bigl(\frac{4i}{3}\bigr)}, 
\label{eq:det33g} \\
E_{2, 0, 2, 2}(n) &= 2\prod_{i=1}^{n} \frac{2^{i-3}\,\Gamma\bigl(4i+1\bigr)\,\Gamma\bigl(\frac{i+2}{3}\bigr)}{\Gamma\bigl(3i+1\bigr)\,\Gamma\bigl(\frac{4i+2}{3}\bigr)}. 
\label{eq:det33h}
\end{align}
Moreover, some related determinants can be expressed in terms of these;
the following identities hold (at least) for all $n\geq3$:
\begin{align}
E_{0, 0, 0, 0}(n) &= \frac12 \, E_{0, 1, -1, -3}(n) =
\frac15 \, E_{0, 2, -2, -6}(n), \label{eq:det33i} \\
E_{1, 0, 1, 1}(n) &= -\frac{1}{84} \, E_{1, 3, -2, -8}(n) = 2 \, E_{4, 2, 4, 0}(n-1) =
\frac65 \, E_{4, 3, 3, -3}(n-1), \label{eq:det33j} \\
E_{2, 0, 2, 2}(n) &= 2 \, E_{5, 2, 5, 1}(n-1) = 18 \, E_{8, 4, 8, 0}(n-2) =
\frac{162}{5} \, E_{8, 5, 7, -3}(n-2), \label{eq:det33k} \\
E_{-3, 2, 1, -3}(n) &= E_{0, 3, 5, -1}(n-1), \label{eq:det33l} \\
E_{0, 1, -1, -3}(n) &= 4 \, E_{3, 2, 3, -1}(n-1), \label{eq:det33m} \\
E_{1, 1, 0, -2}(n) &= -2 \, E_{4, 2, 4, 0}(n-1), \label{eq:det33n} \\
E_{1, 2, -1, -5}(n) &= -12 \, E_{4, 3, 3, -3}(n-1) = -180 \, E_{7, 4, 7, -1}(n-2),
\label{eq:det33o} \\
E_{2, 1, 1, -1}(n) &= E_{5, 2, 5, 1}(n-1) ,\label{eq:det33p} \\
E_{2, 2, 0, -4}(n) &= \frac{15}{2} \, E_{5, 3, 4, -2}(n-1) =
-45 \, E_{8, 4, 8, 0}(n-2), \label{eq:det33q} \\
E_{2, 3, -1, -7}(n) &= 36 \, E_{5, 4, 3, -5}(n-1) =
-\frac{13608}{5} \, E_{8, 5, 7, -3}(n-2). \label{eq:det33r} 
\end{align}
\end{theorem}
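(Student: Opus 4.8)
The plan is to split the statement into its two parts and attack them by the two methods that already proved the modulus-$2$ analogue, Theorem~\ref{thm:det22}. The eight closed-form evaluations \eqref{eq:det33a}--\eqref{eq:det33h} (together with the closed forms for the further determinants occurring in \eqref{eq:det33i}--\eqref{eq:det33k}) are to be established by the holonomic Ansatz of Section~\ref{sec:holonom}; the dimension-reducing relations \eqref{eq:det33l}--\eqref{eq:det33r} are to be established by a block-decomposition argument based on a parameter-shift symmetry.

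For each of the eight parameter tuples the matrix entry $a_{i,j}=3^{i+\beta}\binom{i+3j+\gamma}{3j+\alpha}+\binom{-i+3j+\delta}{3j+\alpha}$ is holonomic in $(i,j)$ and does not depend on~$n$, so the Ansatz applies verbatim as in the proof of Theorem~\ref{thm:DiFran}. Concretely, I would first compute the cofactor quotients $c_{n,j}$ from \eqref{cnj} for small~$n$, guess a holonomic annihilator ideal $\mathfrak I$ for $c_{n,j}$ (taking care that the guessed operators form a Gr\"obner basis whose leading monomials fix a finite holonomic rank, as for the system \eqref{eq:supp}), and define $\tilde c_{n,j}$ to be the solution of $\mathfrak I$ determined by matching finitely many initial values with $c_{n,j}$. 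I would then verify \eqref{H1} by extracting a univariate recurrence for the diagonal $\tilde c_{n,n-1}$ and checking that the constant sequence~$1$ solves it, and verify \eqref{H2} by creative telescoping supplemented by a finite check of initial values; together these identify $\tilde c_{n,j}=c_{n,j}$, so that $\mathfrak I$ is a valid holonomic description. Finally, \eqref{H3} yields a recurrence for $\det A_n/\det A_{n-1}$, and with $b_n$ the conjectured product it remains to check that the hypergeometric-type ratio $b_n/b_{n-1}$ (readable from the $\Gamma$-quotient forms) is annihilated by that recurrence and to match enough initial values. Since \eqref{eq:det33i}--\eqref{eq:det33k} merely relate determinants whose closed-form products coincide up to an explicit rational constant, they follow immediately once the individual closed forms are in hand.

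For the relations \eqref{eq:det33l}--\eqref{eq:det33r} the driving observation is that, in the modulus-$3$ setting, the parameter shift $(\alpha,\beta,\gamma,\delta)\mapsto(\alpha+3,\beta+1,\gamma+4,\delta+2)$ exactly implements the index shift $(i,j)\mapsto(i+1,j+1)$; hence $A_{\alpha+3,\beta+1,\gamma+4,\delta+2}(n-1)$ is the lower-right $(n-1)\times(n-1)$ submatrix of $A_{\alpha,\beta,\gamma,\delta}(n)$, and iterating the shift produces the $n-2$ relations. Each parameter pair occurring in \eqref{eq:det33l}--\eqref{eq:det33r} differs by $(3,1,4,2)$ (or twice that), so I would, after multiplying on the left and right by suitable unipotent lower/upper-triangular matrices $L_n,R_n$ as in the $L_n\cdot A\cdot R_n$ computation following Theorem~\ref{thm:det22}, exhibit a block form $\begin{psmallmatrix}C&0\\ \ast&A\end{psmallmatrix}$ or $\begin{psmallmatrix}C&\ast\\0&A\end{psmallmatrix}$ in which $C$ is a fixed $1\times1$, $2\times2$, or $3\times3$ corner and $A$ is the matrix of the right-hand side; the numerical prefactors are then precisely $\det C$. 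Here one must track carefully which binomial entries vanish under the convention \eqref{eq:bin}, since this is exactly what forces the off-diagonal block to be zero and pins down the corner~$C$. The triangular reductions themselves reduce to Pascal-type telescoping identities for the binomials together with the geometric-series identities for the powers $3^i$, which can be certified by Gosper's or Zeilberger's algorithm.

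The main obstacle I anticipate is computational rather than conceptual: the holonomic Ansatz must be run afresh for each of the (more than eight) determinants, and the guessing, the Gr\"obner-basis computations in the Ore algebra, and especially the creative-telescoping proofs of \eqref{H2} and \eqref{H3} produce recurrences far too large to display, with a cost that grows rapidly on passing from modulus~$2$ to modulus~$3$. A secondary subtlety is the boundary bookkeeping: the supports of the guessed recurrences and the zeros of their leading coefficients dictate a finite but not entirely obvious set of initial values that must be checked by hand, and the block-decomposition relations have to be verified for enough small~$n$ to secure the range $n\geq3$ claimed in the theorem.
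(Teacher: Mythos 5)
Your proposal follows essentially the same route as the paper: the closed forms \eqref{eq:det33a}--\eqref{eq:det33k} are established by the holonomic Ansatz exactly as you describe (with the heavy computations relegated to electronic material), and the relations \eqref{eq:det33l}--\eqref{eq:det33r} are obtained from the block structure induced by the shift $(\alpha,\beta,\gamma,\delta)\mapsto(\alpha+3,\beta+1,\gamma+4,\delta+2)$, which you identified correctly as the modulus-$3$ analogue of the shift used for Theorem~\ref{thm:det22}. The only superfluous element is the multiplication by unipotent matrices $L_n,R_n$: for parameter tuples related by this shift, the vanishing binomial entries (under convention \eqref{eq:bin}) already put the larger matrix in the required block form, so one may take $L_n=R_n=I$ there --- the $L_n\cdot A\cdot R_n$ device is only needed for pairs \emph{not} related by the shift, as in \eqref{eq:det22g}--\eqref{eq:det22i}.
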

\begin{proof}
Identities \eqref{eq:det33a}--\eqref{eq:det33k} can be proven by the holonomic
Ansatz, see~\cite{EM} for the details. Some computational data are given in
Table~\ref{tab:comp}. Identities \eqref{eq:det33l}--\eqref{eq:det33r} can
easily be established by exploiting the block structure of the corresponding
matrices: the larger matrices in each formula have a block of zeros, and the
smaller matrices from the same formula in the lower right corner.
\end{proof}

\begin{conjecture} \label{conj:det33x}
Let
\[
  \Xi(x) := \prod_{i=2}^x \frac{3 \, \Gamma(i) \, \Gamma(4i-3) \, \Gamma(4 i-2)}{%
    2 \, \Gamma(3i-2)^2 \, \Gamma(3i-1)}
  \qquad\text{and}\qquad
  \mu_m(x) := \begin{cases}
    2, & \text{if\/ } 3 \mid (x-m), \\
    1, & \text{otherwise}.
  \end{cases}
\]
Then, for all non-negative integers~$x$ and for all $n\geq x$, we have
\begin{align}
  E_{0,x,-x,-3x}(n) &=
  2\mu_1(x) \, \Xi(x) \, (-1)^{\lfloor\frac{x}{3}\rfloor}
  \prod_{i=1}^n \frac{2^{i-1} \, \Gamma(4 i-3) \, \Gamma\bigl(\frac{i+1}{3}\bigr)}{%
    \Gamma(3i-2) \, \Gamma\bigl(\frac{4i-2}{3}\bigr)},
  \label{det3j0x}
  \\
  E_{1,x,1-x,1-3x}(n) &=
  2\mu_2(x) \, \Xi(x) \, (-1)^{\lfloor\frac{x+2}{3}\rfloor}
  \prod_{i=1}^n \frac{2^{i-2} \, \Gamma(4i-1) \, \Gamma\bigl(\frac{i}{3}\bigr)}{%
    3 \, \Gamma(3i-1) \, \Gamma\bigl(\frac{4i}{3}\bigr)},
  \label{det3j1x}
  \\
  E_{2,x,2-x,2-3x}(n) &=
  \frac{\mu_0(x)}{n} \, \Xi(x) \, (-1)^{\lfloor\frac{x+1}{3}\rfloor}
  \prod_{i=2}^n \frac{2^{i-3} \, \Gamma(4i+1) \, \Gamma\bigl(\frac{i-1}{3}\bigr)}{%
    9 \, \Gamma(3i) \, \Gamma\bigl(\frac{4i+2}{3}\bigr)}.
  \label{det3j2x}
\end{align}
\end{conjecture}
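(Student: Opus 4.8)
Since the holonomic Ansatz already settles each of the three families for every fixed non-negative integer~$x$ (as observed in the text preceding the conjecture), the real task is a proof that is \emph{uniform} in~$x$. My plan is to imitate the constant term calculus of the second proof of Theorem~\ref{thm:einfach}, whose chief virtue here is that it treats the three families at once. Using $\binom Nk=\CT_z(1+z)^Nz^{-k}$, I would represent each determinant as $\CT_{\mathbf z}$ of a determinant in which the $j$-th column carries its own variable~$z_j$. Pulling $z_j^{-3j-\alpha}(1+z_j)^{3j+\alpha}$ out of column~$j$ collapses all three families (indexed by $\alpha=0,1,2$) to one and the same inner determinant
\[
  \det_{0\le i,j\le n-1}\left(3^{\,i+x}(1+z_j)^{\,i-x}+(1+z_j)^{-i-3x}\right),
\]
the families then being distinguished only by the residual column factor $z_j^{-\alpha}(1+z_j)^{\alpha}$.

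The crucial observation is a coincidence of coefficients. Setting $s_j:=\sqrt3\,(1+z_j)$ and extracting $3^{i/2}$ from the $i$-th row, the inner entry becomes $3^{i/2}\bigl(A_j s_j^{\,i}+B_j s_j^{-i}\bigr)$ with $A_j=3^{x}(1+z_j)^{-x}$ and $B_j=(1+z_j)^{-3x}$, and one checks that $A_j/B_j=s_j^{\,2x}$. Consequently $A_j s_j^{\,i}+B_j s_j^{-i}=B_j s_j^{\,x}\bigl(s_j^{\,i+x}+s_j^{-(i+x)}\bigr)$, so that after pulling out the column factor $B_j s_j^{\,x}=3^{x/2}(1+z_j)^{-2x}$ the inner determinant becomes
\[
  3^{\binom n2/2}\,3^{nx/2}\,\Bigl(\prod_{j=0}^{n-1}(1+z_j)^{-2x}\Bigr)\,
  \det_{0\le i,j\le n-1}\left(s_j^{\,i+x}+s_j^{-(i+x)}\right).
\]
The remaining determinant is a symmetric determinant whose exponents $x,x+1,\dots,x+n-1$ form a staircase shifted by~$x$; equivalently it is an orthogonal-type character of the rectangular shape $(x^n)$, and it admits a closed product evaluation of exactly the kind of \cite[Eq.~(2.5)]{KratBN} used in the warmup, producing a pairwise product $\prod_{0\le i<j\le n-1}(\cdots)$ in the~$s_i$ together with one $x$-dependent factor per variable.

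With the inner determinant in product form, I would symmetrize over all permutations of $z_0,\dots,z_{n-1}$ and invoke the Vandermonde evaluation precisely as in the second proof of Theorem~\ref{thm:einfach}, thereby reducing each $E_{\alpha,x,\gamma,\delta}(n)$ to the extraction of a single coefficient from one explicit product, with the three values $\alpha=0,1,2$ entering only through the factor $z_j^{-\alpha}(1+z_j)^{\alpha}$.

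The hard part will be this final coefficient extraction carried out with $x$ \emph{symbolic}. The prefactors $(1+z_j)^{-2x}$ and $s_j^{\,x}$ make the sought coefficient depend arithmetically on~$x$, and it is exactly here that the period-$3$ factor $\mu_m(x)$, the sign $(-1)^{\fl{x/3}}$, the hyperfactorial-type prefactor $\Xi(x)$, and even the validity range $n\ge x$ must all be made to appear; reproducing these residue- and parity-dependent quantities in closed form is the genuine obstacle, and is, I suspect, why a uniform proof has so far resisted. Should the extraction prove intractable, a safer fallback is to retain the holonomic Ansatz as the engine for each fixed~$x$ and to supplement it with a second recurrence in the parameter~$x$ itself --- of order large enough to absorb the period-$3$ behaviour of $\mu_m$ and the period-$6$ behaviour of $(-1)^{\fl{x/3}}$ --- relating $E(n;x)$ to $E(n';x-1)$; a two-dimensional induction on $(n,x)$ anchored by Theorem~\ref{thm:det33} at small~$x$ would then close the argument.
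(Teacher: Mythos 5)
First, a point of calibration: the paper does \emph{not} prove this statement. It is explicitly left open (``We admit that we do not know a different method that would work here''), so your proposal cannot be checked against a paper proof; it has to stand on its own. It does not, because of one concrete false step. Your constant-term reduction itself is correct and even attractive: pulling $z_j^{-3j-\alpha}(1+z_j)^{3j+\alpha}$ out of column~$j$ does leave the common inner entry $3^{i+x}(1+z_j)^{i-x}+(1+z_j)^{-i-3x}$, and with $s_j=\sqrt3\,(1+z_j)$, $A_j=3^x(1+z_j)^{-x}$, $B_j=(1+z_j)^{-3x}$ one indeed has $A_j/B_j=s_j^{2x}$, so everything reduces to $\det_{0\le i,j\le n-1}\bigl(s_j^{\,i+x}+s_j^{-(i+x)}\bigr)$. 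The fatal step is the next one: the claim that this shifted determinant ``admits a closed product evaluation of exactly the kind of \cite[Eq.~(2.5)]{KratBN}''. That is false for $x\ge1$. Equation~(2.5) is a Weyl-denominator-type formula and applies to the exponents $0,1,\dots,n-1$; after the shift, your determinant equals that denominator times an even-orthogonal character of the rectangular shape $(x^n)$, and such characters do not factor into pairwise and single-variable pieces. Concretely, write $y_j=s_j+s_j^{-1}$ and take $n=2$: for $x=1$ the determinant is $(y_1-y_0)(y_0y_1+2)$, but for $x=2$ it is
\[
(y_1-y_0)\bigl(y_0^2y_1^2+y_0y_1-2y_0^2-2y_1^2+6\bigr),
\]
and the second factor is irreducible even over $\mathbb{C}$: as a quadratic in $y_0$ its discriminant is $8y_1^4-39y_1^2+48$, which is not the square of any polynomial in $y_1$. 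So no product of the asserted shape exists, and the entire downstream machinery --- symmetrization, Vandermonde evaluation, coefficient extraction --- which requires the integrand to be a product, never gets off the ground.

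Even setting that aside, the proposal is incomplete by its own admission: the decisive step, extracting the constant term with $x$ symbolic (which is where $\mu_m(x)$, $(-1)^{\lfloor x/3\rfloor}$, $\Xi(x)$, and the restriction $n\ge x$ would all have to emerge), is explicitly deferred as ``the genuine obstacle''. The fallback is not a proof either: running the holonomic Ansatz separately for each fixed $x$ amounts to infinitely many computations, and the hypothesized recurrence in~$x$ linking $E(n;x)$ to $E(n';x-1)$ is never exhibited, nor is any reason given that such a recurrence exists or could be certified. What you genuinely have is a correct and potentially useful reduction of all three families to a single character-times-denominator determinant (an even-orthogonal character of shape $(x^n)$ evaluated at $\sqrt3\,(1+z_j)$); turning that into a proof would require handling that character as an honest multivariate symmetric function --- for instance via a bialternant or lattice-path expansion inside the constant term --- rather than pretending it factors.
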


\begin{remark}
Identity \eqref{det3j0x} generalizes \eqref{eq:det33i} (their closed forms are
obtained by combining \eqref{eq:det33m} with \eqref{eq:det33f}).
Identity~\eqref{det3j1x} generalizes some determinants given in
\eqref{eq:det33j}, \eqref{eq:det33n}, and \eqref{eq:det33o}.
Identity~\eqref{det3j2x} generalizes some determinants given in
\eqref{eq:det33k}, \eqref{eq:det33p}, \eqref{eq:det33q}, and
\eqref{eq:det33r}.
\end{remark}

\section{Variations on the theme, III}
\label{sec:var3}

In this section, we present several variations of the determinant evaluations
in Section~\ref{sec:conj2} in which the power~$2^i$ gets replaced by~$4^i$.
As in the previous sections, we start by identifying some sporadic cases,
which were found in an automated search inside the parameter
space $-6\leq\alpha,\beta\leq9$ and $-9\leq\gamma,\delta\leq9$,
before we turn to two parametric families. We are able to prove one of them
using the holonomic Ansatz; see Theorem~\ref{thm:detx41}.
The second, Theorem~\ref{thm:MS1},
does not seem suitable for the application
of the holonomic Ansatz. On the other hand, the application of a
--- non-algorithmic --- method is feasible: identification of factors.
Due to its length, we provide the corresponding proof separately in the
next section.
Still, this second result must be considered as incomplete as we are not able
to identify one factor in the determinant evaluation; we are only
able to provide a conjectural recurrence that this factor seems to
satisfy; see Conjecture~\ref{conj:MS1}.

\medskip
Let us introduce the following notation for the determinants in question:
\[
  F_{\alpha,\beta,\gamma,\delta}(n) :=
  \det_{0\leq i,j\le n-1}\left(
  4^{i+\beta}\binom{i+2j+\gamma}{2j+\alpha}
  +\binom{-i+2j+\delta}{2j+\alpha}\right).
\]

\begin{theorem}\label{thm:det24}
The following determinant evaluations hold for all $n\geq1$:
\begin{align}
F_{1, 0, 1, 1}(n) &= 2\prod_{i=1}^{n} \frac{3^{i-1}\,\Gamma\bigl(3i-1\bigr)\,\Gamma\bigl(\frac{i+1}{2}\bigr)}{\Gamma\bigl(2i\bigr)\,\Gamma\bigl(\frac{3i-1}{2}\bigr)}, \label{eq:det24a} 
\\
F_{1, 0, 2, 2}(n) &= 2\prod_{i=1}^{n} \frac{3^{i-1}\,\Gamma\bigl(3i\bigr)\,\Gamma\bigl(\frac{i}{2}\bigr)}{2\,\Gamma\bigl(2i\bigr)\,\Gamma\bigl(\frac{3i}{2}\bigr)}, \label{eq:det24b} 
\\
F_{1, 0, 3, 3}(n) &= 2\prod_{i=1}^{n} \frac{3^i\,\Gamma\bigl(3i-1\bigr)\,\Gamma\bigl(\frac{i+1}{2}\bigr)}{\Gamma\bigl(2i\bigr)\,\Gamma\bigl(\frac{3i-1}{2}\bigr)}. \label{eq:det24c} 
\end{align}
Moreover, some related determinants can be expressed in terms of these;
the following identities hold (at least) for all $n\geq4$:
\begin{align}
  F_{1, 0, 1, 1}(n) &= \frac23 \, F_{1, 1, -1, -3}(n) = \frac{1}{21} \, F_{1, 2, -3, -7}(n),
  \label{eq:det24d} \\
  F_{1, 0, 2, 2}(n) &= -2 \, F_{1, 1, 0, -2}(n) = \frac27 \, F_{1, 2, -2, -6}(n), \label{eq:det24e} \\
  F_{1, 0, 3, 3}(n) &= 2 \, F_{1, 1, 1, -1}(n) = \frac25 \, F_{1, 2, -1, -5}(n) =
  \frac{1}{99} \, F_{1, 3, -3, -9}(n), \label{eq:det24f} \\
  F_{1, 1, -1, -3}(n) &= -6 \, F_{3, 2, 2, -2}(n-1) = 24 \, F_{5, 3, 5, -1}(n-2), \label{eq:det24g} \\
  F_{1, 1, 0, -2}(n) &= -2 \, F_{3, 2, 3, -1}(n-1). \label{eq:det24h}
\end{align}
\end{theorem}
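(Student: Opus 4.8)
The plan is to partition the asserted identities into two groups and treat each by the technique already used for Theorems~\ref{thm:DiFran} and~\ref{thm:det22}. The three closed-form evaluations \eqref{eq:det24a}--\eqref{eq:det24c}, together with the ``horizontal'' proportionalities \eqref{eq:det24d}--\eqref{eq:det24f} in which the dimension stays equal to~$n$ while the parameter vector $(\alpha,\beta,\gamma,\delta)$ varies, will be handled by the holonomic Ansatz of Section~\ref{sec:holonom}. The ``diagonal'' relations \eqref{eq:det24g}--\eqref{eq:det24h}, in which the dimension drops and the parameters undergo the shift $(\alpha,\beta,\gamma,\delta)\mapsto(\alpha+2,\beta+1,\gamma+3,\delta+1)$, will instead be read off from the block structure of the underlying matrices.

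For the first group, the entries $a_{i,j}=4^{i+\beta}\binom{i+2j+\gamma}{2j+\alpha}+\binom{-i+2j+\delta}{2j+\alpha}$ are holonomic in~$i$ and~$j$ and do not depend on~$n$, so the method applies verbatim. Concretely, I would compute the normalized cofactors $c_{n,j}$ of~\eqref{cnj} for a range of~$n$, guess a Gr\"obner basis of recurrences in the shift operators $\Sn,\Sj$ annihilating $c_{n,j}$, and certify it by verifying \eqref{H1} and~\eqref{H2} for the sequence defined by that basis: \eqref{H1} by extracting a univariate recurrence for the near-diagonal $c_{n,n-1}$ that admits the constant solution~$1$, and \eqref{H2} by creative telescoping applied separately to the two sums $\sum_j 4^{i+\beta}\binom{i+2j+\gamma}{2j+\alpha}\,c_{n,j}$ and $\sum_j\binom{-i+2j+\delta}{2j+\alpha}\,c_{n,j}$, followed by a finite check of the initial values dictated by the supports and the zeros of the leading coefficients. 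Summing \eqref{H3} after the same splitting then yields a recurrence for $\det A_n/\det A_{n-1}$; since the right-hand sides of \eqref{eq:det24a}--\eqref{eq:det24c} are products of $\Gamma$-quotients, their consecutive ratios are hypergeometric in~$n$ (or in~$n/2$), so it remains to right-divide the determinant recurrence by the corresponding low-order operator and to match finitely many initial values. The proportionalities \eqref{eq:det24d}--\eqref{eq:det24f} then follow by running the same procedure on each of the determinants $F_{1,1,-1,-3}(n),F_{1,2,-3,-7}(n),\dots$ appearing there and comparing the resulting closed forms; equivalently, one checks that the quotient of the two sides obeys a trivial first-order recurrence with the stated initial data.

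For the diagonal relations \eqref{eq:det24g}--\eqref{eq:det24h}, the essential point is that the parameter shift $(\alpha,\beta,\gamma,\delta)\mapsto(\alpha+2,\beta+1,\gamma+3,\delta+1)$ is exactly what the index shift $(i,j)\mapsto(i+1,j+1)$ induces on $a_{i,j}$; hence the matrix of the smaller, shifted determinant is literally the lower-right $(n-1)\times(n-1)$ (or $(n-2)\times(n-2)$) submatrix of the larger one. It then suffices to show that the complementary top row or left column vanishes away from the corner. For the ``clean'' cases --- such as \eqref{eq:det24h}, where the convention~\eqref{eq:bin} forces $\binom{2j}{2j+1}=\binom{2j-2}{2j+1}=0$ for $j\ge1$, so that row~$0$ of $F_{1,1,0,-2}(n)$ equals $(-2,0,0,\dots)$ --- this is immediate and produces the scalar factor as the corner entry. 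For the remaining cases the top row or left column is only almost clean, and one first applies determinant-preserving unimodular row and column operations, exactly as in the worked example culminating in~\eqref{eq:LR}, thereby reducing the claim to telescoping binomial identities of the type~\eqref{eq:sumid}; the factors $-6$, $24$, $-2$ emerge as the determinant of the resulting small corner block.

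The genuine difficulty is concentrated in the holonomic Ansatz step and is computational rather than conceptual: guessing and certifying the bivariate annihilating recurrences for $c_{n,j}$, and then carrying out creative telescoping on the split sums in~\eqref{H2} and~\eqref{H3}. As for the $2^i$ and $3^i$ families, the resulting operators and their polynomial coefficients are far too large to record by hand, so the real work is organizing the elimination, keeping track of the initial values forced by the zeros of the leading coefficients, and confirming that the conjectured $\Gamma$-product forms satisfy the certified determinant recurrence. The block-structure relations, by contrast, should reduce to short, routine verifications.
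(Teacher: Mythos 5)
Your proposal is correct and follows essentially the same route as the paper: identities \eqref{eq:det24a}--\eqref{eq:det24f} are proved by the holonomic Ansatz (establishing closed forms for the right-hand-side determinants in \eqref{eq:det24d}--\eqref{eq:det24f} and comparing them), while \eqref{eq:det24g}--\eqref{eq:det24h} follow from the block structure created by the fact that the parameter shift $(\alpha,\beta,\gamma,\delta)\mapsto(\alpha+2,\beta+1,\gamma+3,\delta+1)$ is exactly the index shift $(i,j)\mapsto(i+1,j+1)$, so the larger matrices have zero blocks (of size $2\times(n-2)$ resp.\ $1\times(n-1)$) in their upper right corners and the scalar factors are the determinants of the small corner blocks. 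Your hedge about needing \eqref{eq:LR}-type unimodular row and column operations is superfluous here --- the zero blocks appear without any preprocessing --- but it does not affect the correctness of the argument.
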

\begin{proof}
Identities \eqref{eq:det24a}--\eqref{eq:det24f} can be proven, quite effortlessly,
by the holonomic Ansatz, see~\cite{EM}. For the determinants on the right-hand
sides of \eqref{eq:det24d}--\eqref{eq:det24f}
we have established closed forms, from which the displayed relations follow.
Identities \eqref{eq:det24g}--\eqref{eq:det24h} can easily be established by
exploiting the block structure of the matrices $F_{1, 1, -1, -3}(n)$
respectively 
$F_{3, 2, 2, -2}(n)$ and $F_{1, 1, 0, -2}(n)$, which have a block of zeros
(of size $2\times(n-2)$ respectively $1\times(n-1)$) in their upper
right corner. 
\end{proof}

The parameters of the determinants
in \eqref{eq:det24a}--\eqref{eq:det24c} follow
an obvious pattern (in contrast to their right-hand sides). Indeed, the
determinants $F_{1,0,4,4}(n),\dots,F_{1,0,9,9}(n)$ were also found to
factor nicely, and in fact one can come up with a general closed form.
Note that the determinant below corresponds to $F_{1,0,x+1,x+1}(n)$.

\begin{theorem}\label{thm:detx41}
  Let $x$ be an indeterminate. Then, for all integers $n\geq1$, we have:
  \begin{multline}\label{detx41}
    \det_{0\leq i,j\le n-1}\biggl(
    4^i \binom{x+i+2j+1}{2j+1} + \binom{x-i+2j+1}{2j+1}\biggr) =
    2\prod_{i=1}^n \frac{2^{2i-1} \, 3^{i-1} \, \Gamma(i) \, \Gamma\bigl(\frac{3i+x}{2}\bigr)}{%
       \Gamma(2i) \, \Gamma\bigl(\frac{i+x}{2}\bigr)}
\\
=
2^{\binom {n+1}2+1}3^{\binom n2}
\prod _{i=1} ^{n}\frac {i!} {(2i)!}
\prod _{i=0} ^{n-1}(x+3i+1)_{n-i}.
\end{multline}
\end{theorem}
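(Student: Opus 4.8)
The plan is to apply the holonomic Ansatz of Section~\ref{sec:holonom}, exactly as in the proof of Theorem~\ref{thm:DiFran}, now carrying the extra parameter~$x$. Write $a_{i,j}=4^i\binom{x+i+2j+1}{2j+1}+\binom{x-i+2j+1}{2j+1}$; these entries are holonomic in $(i,j)$ (with $x$ a parameter) and do not depend on~$n$, so the method applies. First I would generate the data $c_{n,j}$ from \eqref{H1} and~\eqref{H2} for a range of small~$n$ (keeping $x$ symbolic), then guess a left Gr\"obner basis of bivariate recurrences generating an annihilator ideal~$\mathfrak I$; I expect a holonomic rank of three, as in the $2^i$ case. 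Define $\tilde c_{n,j}$ to be the solution of~$\mathfrak I$ whose initial values match $c_{n,j}$, and aim to show $\tilde c_{n,j}=c_{n,j}$ by verifying \eqref{H1} and~\eqref{H2} for~$\tilde c_{n,j}$.

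For \eqref{H1}, I would use $\mathfrak I$ to derive a univariate recurrence for the almost-diagonal sequence $\tilde c_{n,n-1}$, exhibit the constant sequence~$1$ as a solution (the operator should again carry the right factor $\Sn-1$), and match a few initial values. For \eqref{H2}, following the warmup and the proof of Theorem~\ref{thm:DiFran}, I would split $\sum_j a_{i,j}\tilde c_{n,j}$ into the two sums coming from the two binomial terms, compute an annihilator of each summand by closure properties, apply creative telescoping to obtain recurrences in $n$ and~$i$ for the two sums (which should share a common annihilator), and finally check that the combined sum vanishes on the finite set of initial values dictated by the supports, the constraint $i<n-1$, and the zeros of the leading coefficients. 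This identifies $\tilde c_{n,j}$ with $c_{n,j}$ and licenses $\mathfrak I$ as a holonomic definition of $c_{n,j}$.

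It remains to prove \eqref{H3}. Splitting the sum $\sum_j a_{n,j}c_{n,j}=\det(A_n)/\det(A_{n-1})$ into two and running creative telescoping produces a recurrence operator~$P$ in~$n$. One then checks that the conjectured ratio
\[
  \frac{b_n}{b_{n-1}}=\frac{2^{2n}\,3^{n-1}\,n!}{(2n)!}\cdot\frac{\Gamma\bigl(\tfrac{3n+x}{2}\bigr)}{\Gamma\bigl(\tfrac{n+x}{2}\bigr)}
\]
is annihilated by~$P$: since the quotient $(b_{n+2}/b_{n+1})/(b_n/b_{n-1})$ is rational in~$n$, this ratio satisfies a second-order recurrence with support $\{\Sn^2,1\}$, so it suffices to right-divide $P$ by that order-two operator, observe that the remainder is~$0$, and compare finitely many initial values (equivalently, verify the asserted evaluation for small~$n$). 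The equivalence of the two product expressions for $b_n$ in~\eqref{detx41} is a routine $\Gamma$/Pochhammer manipulation. An alternative, computer-free route mirrors the second proof of Theorem~\ref{thm:einfach}: writing $\binom Nk=\CT_z(1+z)^Nz^{-k}$ and using $4^i(1+z)^i+(1+z)^{-i}=2^i\bigl(W^i+W^{-i}\bigr)$ with $W=2(1+z)$ reduces the determinant to $\det_{0\le i,j\le n-1}\bigl(W_j^i+W_j^{-i}\bigr)$, evaluable by \cite[Eq.~(2.5)]{KratBN}; the only difference from the warmup is that the column factor now carries $z_j^{-(2j+1)}$ rather than $z_j^{-j}$, and it is exactly this odd indexing that should produce the nontrivial $x$-dependent product.

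The main obstacle I anticipate is purely computational: the symbolic parameter~$x$ inflates the guessed recurrences and, above all, the creative-telescoping certificates in steps~\eqref{H2} and~\eqref{H3} to sizes that strain both the guessing and the closure-property computations. This is precisely the difficulty that, for the analogous $3^i$ families, prevented a proof (cf.\ Conjecture~\ref{conj:det33x}); I expect the present case to be tractable only because the $4^i$ determinant keeps these objects just small enough. A secondary point requiring care is the appearance of the half-integer arguments $\tfrac{3n+x}{2}$ and $\tfrac{n+x}{2}$, which forces the shift-by-two structure used in the verification of $b_n/b_{n-1}$.
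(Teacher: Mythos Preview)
Your proposal is correct and follows essentially the same route as the paper: the holonomic Ansatz with the extra parameter~$x$, guessing an annihilator~$\mathfrak I$ (indeed of rank three; cf.\ Table~\ref{tab:comp}), certifying \eqref{H1} via the right factor $\Sn-1$, establishing \eqref{H2} and~\eqref{H3} by splitting and creative telescoping, and verifying the ratio $b_n/b_{n-1}$ through its order-two recurrence in~$n$. Your remark about the constant-term alternative (using $4^i(1+z)^i+(1+z)^{-i}=2^i(W^i+W^{-i})$ with $W=2(1+z)$) is likewise in line with the paper, which notes after Lemma~\ref{lem:CK6} that such a proof of Theorem~\ref{thm:detx41} is possible.
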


\begin{proof}
  The proof is analogous to the proof of Theorem~\ref{thm:det24}, with the
  only difference that the computations are heavier, due to the additional
  parameter~$x$. Since among all determinants in this paper, the ones stated
  in Theorem~\ref{thm:det24} require the least computational effort, their
  parameterized version~\eqref{detx41} is still doable, while all other
  parameterized determinants resisted a proof via the holonomic Ansatz, due to
  their computational complexity (compare the data given in
  Table~\ref{tab:comp}).
\end{proof}

\begin{theorem} \label{thm:MS1}
For all positive integers $n$, we have
\begin{multline} \label{eq:MS1}
\det_{0\le i,j\le n-1}\left(4^i\binom {x+i+2j+3}{2j+3}+\binom {x-i+2j+3}{2j+3}\right)
\\
=\left(2\cdot 6^{\binom n2}
\prod _{i=0} ^{n-1}\frac {i!} {(2i+3)!}
\right)
\left((x+2)(x+3)
\prod _{i=0} ^{n-1}(x+3i+1)_{n-i}\right)
\times\text{\rm Pol}_n(x),
\end{multline}
where $\text{\rm Pol}_n(x)$ is a monic polynomial in $x$ of
degree~$2n-2$.
\end{theorem}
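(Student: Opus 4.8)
The plan is to prove \eqref{eq:MS1} by Krattenthaler's method of \emph{identification of factors}. Write $D(n;x)$ for the determinant on the left-hand side. Each binomial $\binom{x\pm i+2j+3}{2j+3}$ is a polynomial in $x$ of degree $2j+3$ with leading coefficient $1/(2j+3)!$, so $D(n;x)$ is a polynomial in $x$ and its $(i,j)$-entry has $x$-leading coefficient $(4^i+1)/(2j+3)!$. Every term in the determinant expansion therefore carries the same naive degree $\sum_{j=0}^{n-1}(2j+3)=n^2+2n$, but the associated leading-coefficient matrix $\bigl((4^i+1)/(2j+3)!\bigr)_{0\le i,j\le n-1}$ has rank one and hence vanishing determinant once $n\ge2$. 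Consequently the true degree drops, and the first task is to show that $\deg_x D(n;x)=\tfrac{n(n+5)}{2}=\binom{n+1}{2}+2n$. I would establish this (and, crucially, extract the coefficient of the top power of $x$) by a chain of degree-reducing column operations: subtracting suitable polynomial-in-$x$ multiples of one column from its neighbours, using contiguous relations among the binomials, lowers the degrees column by column until the surviving top-degree matrix becomes nonsingular and can be evaluated explicitly.

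The next task is to peel off the linear factors on the right-hand side of \eqref{eq:MS1}. The main block, $\prod_{i=0}^{n-1}(x+3i+1)_{n-i}$, is exactly the product occurring in the companion evaluation \eqref{detx41} of Theorem~\ref{thm:detx41}, so I would locate its roots by the same device. A convenient entry point is that for $i=0$ the $(0,j)$-entry equals $2\binom{x+2j+3}{2j+3}$, which vanishes for $x=-1,-2,-3$; hence the whole $0$-th row vanishes at these three values and $(x+1)(x+2)(x+3)$ divides $D(n;x)$ for free. The deeper, higher-multiplicity vanishings I would obtain from column relations: using the two-section generating functions \eqref{eq:GF1}--\eqref{eq:GF2} (with the lower index shifted to $2j+3$), one derives, for each negative integer $x=-m$, an explicit combination $\sum_j \lambda_j\,(\text{column }j)$ whose coefficients $\lambda_j$ are independent of the row index $i$ and which vanishes identically in $i$. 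Each mutually independent such relation raises the order of vanishing of $D(n;x)$ at $x=-m$ by one. Summing these orders over all negative integers must reproduce the multiplicities dictated by $(x+2)(x+3)\prod_{i=0}^{n-1}(x+3i+1)_{n-i}$, whose total $x$-degree is $2+\binom{n+1}{2}$.

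Granting this divisibility, $D(n;x)=c_n\,(x+2)(x+3)\prod_{i=0}^{n-1}(x+3i+1)_{n-i}\,Q(x)$ for some polynomial $Q(x)$ and constant $c_n$. Comparing degrees gives $\deg Q=\tfrac{n(n+5)}{2}-2-\binom{n+1}{2}=2n-2$, which is the asserted degree of $\text{Pol}_n$. Since $(x+2)(x+3)$, each Pochhammer factor, and $Q$ are monic, the coefficient of $x^{n(n+5)/2}$ in $D(n;x)$ equals $c_n$ precisely. Thus the leading-coefficient computation from the first step pins down $c_n$, and matching it against $2\cdot 6^{\binom n2}\prod_{i=0}^{n-1} i!/(2i+3)!$ forces $Q=\text{Pol}_n$ to be monic of degree $2n-2$, which is the whole content of the theorem. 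No closed product form for $\text{Pol}_n$ is claimed, and none is expected; its conjectural recurrence is recorded separately as Conjecture~\ref{conj:MS1}.

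The main obstacle is twofold and sits at the two ends of the polynomial. At the low end, establishing the \emph{correct multiplicities} of the repeated linear factors — producing enough mutually independent vanishing column relations at each multiple root $x=-m$, equivalently showing that the rank of the specialised integer matrix drops by exactly the prescribed amount — is the delicate combinatorial heart of the argument; the easy row- and column-vanishings give only simple factors, not the full orders (note, for instance, that $x=-2$ and $x=-3$ must occur with higher multiplicity than the single row-$0$ vanishing supplies). At the high end, because the naive leading coefficient cancels, the genuine degree and the constant $c_n$ must be teased out of the subleading ($\binom n2$-th order) behaviour via the column-reduction bookkeeping, and it is exactly this computation that simultaneously yields the degree $\tfrac{n(n+5)}{2}$, the value of $c_n$, and the monicity of $\text{Pol}_n$. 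The remaining manipulations of binomial sums are routine.
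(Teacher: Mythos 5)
Your overall skeleton coincides with the paper's proof: divisibility by the linear factors via kernel vectors of the specialized matrix, an upper bound $\binom{n+1}{2}+2n$ on the $x$-degree obtained by column operations, and an explicit computation of the top coefficient, which together force $\deg \text{Pol}_n(x)=2n-2$ and its monicity. Your degree bookkeeping (prefactor degree $2+\binom{n+1}{2}$, hence $\deg\text{Pol}_n\le 2n-2$) is exactly right. However, the heart of the argument --- producing, for each root $x=-\beta$ with $4\le\beta\le 3n-2$, the prescribed number of independent kernel vectors --- is only asserted in your proposal, and the mechanism you point to would not deliver it. You propose \emph{column} relations $\sum_j\lambda_j\,(\text{column }j)=0$ ``derived from'' the two-section generating functions \eqref{eq:GF1}--\eqref{eq:GF2}. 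But those generating functions encode the matrix \emph{row-wise}: for fixed $i$, the series $\sum_j a_{i,j}v^j$ is an explicit expression in $(1\pm\sqrt v)^{-x\mp i-1}$. What this structure naturally produces are \emph{row} (left-kernel) relations: a combination $\sum_i\mu_i\,(\text{row }i)$ vanishes if and only if $\sum_i\mu_i\,(\text{generating function of row }i)=0$ identically in $v$, and for $x$ a suitable negative integer one can arrange cancellation of the powers of $(1\pm\sqrt v)$. A column relation, by contrast, amounts to applying one fixed linear functional to the coefficient sequence of each row series, on which this calculus gives no handle. The paper indeed goes the row route: the explicit left-kernel vectors are given in Lemma~\ref{lem:CK5}, whose proof needs a binomial-theorem cancellation (Lemma~\ref{lem:CK3}) and a non-obvious symmetry of a double binomial sum proved by contour integration (Lemma~\ref{lem:CK6}). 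None of this is routine, and nothing in your sketch substitutes for it.

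A second concrete gap is the dependence of the multiplicities on $n$. The exponent of $(x+\beta)$ in \eqref{eq:MS1} is not simply $\lfloor(\beta+2)/3\rfloor$; it gets truncated when $\beta$ is large relative to $n$ (in the paper, $\min\{\lfloor(\beta+2)/3\rfloor,\,\lfloor(\beta+2)/3\rfloor-\lceil(\beta-n)/2\rceil\}$), and this truncation arises precisely because the kernel vectors may only involve actually existing rows (the constraint $s+2t\le n-1$ in the paper's parametrization), while linear independence is argued from the distinct top row indices. Your statement that ``summing these orders over all negative integers must reproduce the multiplicities'' presupposes, rather than proves, exactly this count. Finally, a smaller point: your high-end computation is aimed the same way as the paper's Steps~7--8, but it is not routine either --- the paper needs a ${}_3F_2$ transformation and Chu--Vandermonde to see that the reduced $(i,j)$-entry has degree $j+3$, and then recognizes the resulting leading-coefficient determinant as the $a=4$, $x=0$ case of the warmup Theorem~\ref{thm:einfach}; an equivalent of that chain would have to appear in your argument.
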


The proof of this theorem is given in the next section.

As already mentioned, we do not know an explicit formula for the polynomials
$\text{Pol}_n(x)$ but, experimentally, we found a recurrence that they
seem to satisfy.

\begin{conjecture} \label{conj:MS1}
The polynomial $\text{\rm Pol}_n(x)$ in Theorem~\ref{thm:MS1}
is given by the recurrence
\begin{multline*}
   3 \, \text{\rm Pol}_{n+3}(x)
  -2 \bigl(18 n^2+9 n x+72 n-3 x^2-3 x+49\bigr)\, \text{\rm Pol}_{n+2}(x) \\
   + \bigl(135 n^4+108 n^3 x+810 n^3-54 n^2 x^2+108 n^2 x+1395 n^2-52 n x^3-510 n x^2 \\
  -1100 n x +120 n-9 x^4-152 x^3-855 x^2-1780x-1020\bigr)\, \text{\rm Pol}_{n+1}(x) \\
   -6 (n+1) (n-x-2) (n+x+2) (3 n+x+3) (3 n+x+5) (3 n+x+7)\, \text{\rm Pol}_n(x) = 0
\end{multline*}
and initial values
\begin{align*}
  \text{\rm Pol}_1(x) &= 1, \\
  \text{\rm Pol}_2(x) &= \frac13 \bigl(3 x^2+31 x+60\bigr), \\
  \text{\rm Pol}_3(x) &= \frac19 \bigl(9 x^4+234 x^3+2061 x^2+6956x+7680\bigr).
\end{align*}
\end{conjecture}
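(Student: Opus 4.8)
The plan is to regard $\mathrm{Pol}_n(x)$ as the quotient $\mathrm{Pol}_n(x)=\Delta_n(x)/P_n(x)$, where $\Delta_n(x)$ is the determinant on the left of~\eqref{eq:MS1} and $P_n(x)$ is the explicit product prefactor on its right. Multiplying the conjectured recurrence by $P_{n+3}(x)$ and using the explicit step ratios turns Conjecture~\ref{conj:MS1} into an equivalent \emph{contiguous relation} purely among $\Delta_n,\Delta_{n+1},\Delta_{n+2},\Delta_{n+3}$, with coefficients assembled from
\[
\frac{P_{n+1}(x)}{P_n(x)}=12^{\,n}\,\frac{n!}{(2n+3)!}\,(x+3n+1)\left(\tfrac{x+n+1}{2}\right)_{\!n}.
\]
The first thing to notice is that this ratio is \emph{not} a rational function of~$n$ (the half-integer Pochhammer symbol is the culprit), so $\Delta_n$ is not holonomic in~$n$; the standard holonomic Ansatz can therefore at best produce a recurrence for the ratio $\Delta_n/\Delta_{n-1}$ via~\eqref{H3}. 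But in the present case even that ratio is non-holonomic, since its $P$-part is precisely the non-hypergeometric factor above, and were such a ratio recurrence available, dividing out $P_n$ would yield only a \emph{nonlinear} relation among consecutive $\mathrm{Pol}$'s. The whole point is that the non-holonomic part cancels only in the quotient $\mathrm{Pol}_n=\Delta_n/P_n$, which the cofactor identities~\eqref{H1}--\eqref{H3} do not see directly; this is exactly why the Ansatz is unsuitable for Conjecture~\ref{conj:MS1}.

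Consequently I would attack the contiguous relation for the $\Delta_n$ by Dodgson (Desnanot--Jacobi) condensation rather than by the Ansatz. Deleting the first and/or last row and column of the matrix underlying $\Delta_n$ produces, via the shift rule $(i,j)\mapsto(i+1,j+1)$ already exploited in the proofs of Theorems~\ref{thm:det24} and~\ref{thm:detx41}, minors that are again $4$-power determinants of the same general shape $F_{\alpha,\beta,\gamma,\delta}$ but with shifted parameters (the central minor, for instance, advances $(\alpha,\beta,\gamma,\delta)$ by $(2,1,3,1)$ while leaving $x$ fixed). The first step is therefore to establish, for each family of minors occurring in the condensation, a factorization of exactly the Theorem~\ref{thm:MS1} type, namely an explicit hypergeometric prefactor times a monic polynomial cofactor; the identification-of-factors method of Section~\ref{sec:proof}, together with the already proven fully-factoring family of Theorem~\ref{thm:detx41}, should supply these. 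Feeding the factorizations into the condensation identity yields a \emph{quadratic} relation among the resulting cofactors (at sizes $n$, $n-1$, and $n-2$), and the plan is then to combine the condensation identities at two or three consecutive sizes so as to eliminate the quadratic cross terms and distill the sought order-$3$ linear recurrence.

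A complementary, and logically cleaner, reduction becomes available once one knows that $\mathrm{Pol}_n(x)$ satisfies \emph{some} linear recurrence of order at most three with polynomial coefficients whose degrees in $n$ and $x$ are bounded by those visible in Conjecture~\ref{conj:MS1}. Indeed, the space of such operators is finite dimensional, and $\mathrm{Pol}_n(x)$ is, for every fixed~$n$, an explicit polynomial in~$x$ obtained by dividing $\Delta_n(x)$ by $P_n(x)$ (its degree $2n-2$ and monicity are guaranteed by Theorem~\ref{thm:MS1}). Hence, granting the \emph{existence} of a recurrence with these bounds, the specific operator of Conjecture~\ref{conj:MS1} can be certified by verifying that it annihilates $\mathrm{Pol}_n(x)$ for a finite, a~priori bounded range of~$n$, and by checking the three initial polynomials $\mathrm{Pol}_1,\mathrm{Pol}_2,\mathrm{Pol}_3$ directly from the determinant. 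This separates the problem cleanly into a finite computation and a single existence statement.

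The main obstacle is precisely that existence statement: proving that the quotient $\mathrm{Pol}_n=\Delta_n/P_n$ is holonomic in~$n$ with effective order and degree bounds. Because $\Delta_n$ itself is not holonomic, one cannot simply invoke closure properties; one must show that the non-hypergeometric contributions in any contiguous relation for $\Delta_n$ are carried \emph{entirely} by the prefactor $P_n$, so that they cancel in the quotient and leave polynomial coefficients. I expect the condensation route above to be the realistic way to produce such a relation with controlled coefficients, but the appearance of the genuinely unknown factor $\mathrm{Pol}_n$ --- for which no closed form is available --- makes it delicate to track the degrees and the cancellation of the half-integer Pochhammer factors through the elimination step. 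This is the step I expect to consume essentially all of the work, and it is the reason the statement is currently only a conjecture.
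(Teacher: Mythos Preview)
The paper does not prove this statement: Conjecture~\ref{conj:MS1} is explicitly left open. The recurrence was found experimentally, and the authors write that they ``do not know an explicit formula for the polynomials $\text{Pol}_n(x)$ but, experimentally, we found a recurrence that they seem to satisfy.'' There is therefore no proof to compare against.

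Your proposal is not a proof either, and you are candid about this: you sketch two possible lines of attack (Dodgson condensation on the $F_{\alpha,\beta,\gamma,\delta}$ family, and a finite-verification scheme conditional on a holonomicity/existence statement) and then correctly isolate the genuine obstruction --- namely, that the non-hypergeometric factor coming from $\prod_i (x+3i+1)_{n-i}$ prevents one from concluding that $\Delta_n$, or even $\Delta_n/\Delta_{n-1}$, is holonomic, so one must somehow show that all the non-holonomic content is absorbed by~$P_n$. That diagnosis is sound and is consistent with the paper's remark that Theorem~\ref{thm:MS1} ``does not seem suitable for the application of the holonomic Ansatz.''

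A couple of cautions about the condensation plan. First, the shift rule you invoke, $(i,j)\mapsto(i+1,j+1)$ corresponding to $(\alpha,\beta,\gamma,\delta)\mapsto(\alpha+2,\beta+1,\gamma+3,\delta+1)$, sends the starting matrix $F_{3,0,x+3,x+3}$ to parameter quadruples that do \emph{not} lie in the fully-factoring family of Theorem~\ref{thm:detx41}; the central and corner minors will again carry their own unknown polynomial cofactors, so the Desnanot--Jacobi identity will relate several \emph{different} unknown polynomial families, not just $\text{Pol}_n$ at shifted sizes. The elimination step you describe would then have to handle all of these simultaneously, which is substantially harder than the picture you paint. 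Second, your ``finite verification given bounded-shape existence'' reduction is correct as stated, but it simply relocates the entire difficulty into the existence hypothesis; nothing in the paper or in your outline supplies that hypothesis. In short, your write-up is an honest and well-reasoned assessment of why Conjecture~\ref{conj:MS1} remains open, but it is not a proof, and the paper offers none either.
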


\section{Proof of Theorem \ref{thm:MS1}}
\label{sec:proof}

We now provide our proof of Theorem~\ref{thm:MS1}. Essential parts of
it are based on several auxiliary results that, for the sake of better
readability, are stated and proved separately in
Lemmas~\ref{lem:CK3}--\ref{lem:CK6} further below. 

\begin{proof}[Proof of Theorem \ref{thm:MS1}]
Let us denote the matrix on the left-hand side of~\eqref{eq:MS1}
of which we take the determinant by~$D_n(x)$.

We proceed in several steps. 
First we show that the linear factors that appear on\break
the right-hand
side of~\eqref{eq:MS1} are indeed polynomial factors of~$\det D_n(x)$;
see\break Steps~1--6 below.
In Step~7, we show that the degree
of~$\det D_n(x)$ as a polynomial in~$x$ is bounded above by $\binom
{n+1}2+2n$. Since the prefactor of $\text{Pol}_n(x)$ on the right-hand
side of~\eqref{eq:MS1} has degree $2+\sum_{i=0}^{n-1}(n-i)=2+\binom
{n+1}2$, this implies that the degree of $\text{Pol}_n(x)$ is at
most~$2n-2$. We complete the proof by Step~8 in which we compute the
coefficient of $x^{\binom {n+1}2+2n}$ in the determinant~$\det D_n(x)$, and
thus the leading coefficient of both~$\det D_n(x)$ and~$\text{Pol}_n(x)$.

Below, we use the truth function $\chi$ which is defined by
$\chi(\mathcal A)=1$ if $\mathcal A$ is true and $\chi(\mathcal A)=0$
otherwise. 

We start with some simple divisibility properties of $\det D_n(x)$.

\medskip
{\sc Step 1.} {\it $(x+1)$ is a factor of\/ $\det D_n(x)$.}
This is seen by noting that $(x+1)$ is a factor of each entry
in row~0 of~$D_n(x)$.

\medskip
{\sc Step 2.} {\it $(x+2)^{1+\chi(n\ge2)}$ is a factor of\/ $\det D_n(x)$.}
On the one hand, $(x+2)$ is also a factor of each entry
in row~0 of~$D_n(x)$.
Moreover, $(x+2)$ is a factor of each entry in row~1.

\medskip
{\sc Step 3.} {\it $(x+3)^{1+\chi(n\ge3)}$ is a factor of\/ $\det D_n(x)$.}
Similarly, also $(x+3)$ is a factor of each entry
in row~0 of~$D_n(x)$.
On the other hand, $(x+3)$ is also a factor of each entry in row~1 and
row~2, except for the entries in column~0, which are
$4\binom {x+4}3+\binom {x+2}3$ and 
$16\binom {x+5}3+\binom {x+1}3$, respectively. One can check
that 4~times the first expression minus the second yields a polynomial
that is divisible by~$(x+3)$. By an elementary row operation,
this implies that, as soon as $n\ge3$,
another term $(x+3)$ divides the determinant~$\det D_n(x)$.

\medskip
Before we continue with the ``general" case, we need a few
preparations. By inspection of the right-hand side of~\eqref{eq:MS1},
we see that it remains to show that for $4\le \beta\le 3n-2$ the term
\begin{align} \label{eq:beta}
\notag
(x+\beta)^{\#\{i\ge0:3i+1\le \beta\le n+2i\}}
&=(x+\beta)^{\fl{(\beta-1)/3}-\max\{0,\cl{(\beta-n)/2}\}+1}\\
&=(x+\beta)^{\min\{\fl{(\beta+2)/3},\fl{(\beta+2)/3}-\cl{(\beta-n)/2}\}}
\end{align}
divides $\det D_n(x)$.
We are going to do this by applying the idea of ``identification of
factors" as described in Section~2.4 of~\cite{KratBN}. To be precise,
in order to prove that $(x+\beta)^E$ is a factor of~$\det D_n(x)$, we find
$E$~linear combinations of rows of~$D_n(x)$ that vanish and that are
linearly independent. (In other words, we find $E$~linearly
independent vectors in the left kernel of the matrix~$D_n(x)$.
That the latter is indeed sufficient to infer the claimed divisibility
is argued in \cite[Sec.~2]{KratBI}.)

Our description of these linear combinations of rows of~$D_n(x)$ is in
terms of generating functions, in complete analogy to the calculus
that we applied in Section~\ref{sec:conj2}. Namely, by~\eqref{eq:GF1}
and~\eqref{eq:GF2} the generating function for the entries in row~$i$
of our matrix~$D_n(x)$ is
\begin{align} \notag
\sum_{j\ge0}&\left(4^i\binom {x+i+2j+3}{2j+3}+\binom {x-i+2j+3}{2j+3}\right)v^{j}\\
\notag
&\kern0cm
=4^i\cdot\frac {1} {v}
\left(\frac {1} {2\vv}\left((1-\vv)^{-x-i-1}-(1+\vv)^{-x-i-1}\right)-(x+i)\right)\\
\notag
&\kern1cm
+\frac {1} {v}
\left(\frac {1}
     {2\vv}\left((1-\vv)^{-x+i-1}-(1+\vv)^{-x+i-1}\right)-(x-i)\right)\\
\notag
&\kern0cm
=\frac {1} {2v^{3/2}}
\Big(
4^i\left((1-\vv)^{-x-i-1}-(1+\vv)^{-x-i-1}\right)\\
&\kern2cm
+\left((1-\vv)^{-x+i-1}-(1+\vv)^{-x+i-1}\right)\Big)
-\frac {1} {v}\left(4^i(x+i)+(x-i)\right).
\label{eq:GFRi} 
\end{align}
In view of this expression, Lemma~\ref{lem:CK5} says that, for
non-negative real numbers~$s$ and~$t$ with $t\le s$ such
that all of $s+2t$, $2s+t$, $3s$, and~$3t$ are integers, we have
\begin{equation} \label{eq:lin2} 
\sum_{i=0}^{s+2t}(-1)^i2^{s+2t-i}\alpha(i)
\left(\sum_{j=0}^{3t}\binom {3t}j
\binom {2s+t-j}{s+2t-2j-i}\right)
\cdot(\text{row $i$ of $D_n(-3s-1)$})=0,
\end{equation}
where $\alpha(i)=1$ if $i>0$ and $\alpha(0)=\frac {1} {2}$.
Indeed, Lemma~\ref{lem:CK5} implies that, when we apply generating
function calculus to prove~\eqref{eq:lin2} using~\eqref{eq:GFRi},
all powers of $(1-\sqrt v)$ and $(1+\sqrt v)$ cancel out. On the other
hand, it seems that we would have to check that also the terms that
result from the expressions $\frac {1}
{v}\left(4^i(x+i)+(x-i)\right)$ on the right-hand side
of~\eqref{eq:GFRi} cancel out. That could certainly be done by
computing the corresponding binomial sums. However, it comes for free:
we use the generating function in~\eqref{eq:GFRi} with $x=-3s-1$
and~$i$ in the range $0\le i\le 2s+t$; this implies that
$$x+i=-3s-1+i\le -3s-1+2s+t=-s+t-1\le-1$$
by one of the assumptions of
Lemma~\ref{lem:CK5}. Therefore, with these choices of~$x$
and~$i$, both binomial coefficients in the sum on left-hand side
of~\eqref{eq:GFRi} vanish for large enough~$j$.
In other words: the generating function in~\eqref{eq:GFRi} is always a
polynomial in~$v$. Hence, terms involving negative powers of~$v$ must
automatically cancel out. 

We now discuss the divisibility of $\det D_n(x)$ by the power in~\eqref{eq:beta}
for the congruence classes of~$\beta$ modulo~3 separately.

\medskip
{\sc Step 4.} {\it
  $(x+3s+1)^{\min\{s+1,s+1-\cl{(3s+1-n)/2}\}}$ is a factor
of\/ $\det D_n(x)$ for $1\le s\le n-1$.} The linear combinations of
rows of $D_n(-3s-1)$ given in~\eqref{eq:lin2} vanish for $0\le t\le s$. They are
linearly independent since the highest row number involved is $s+2t$,
which is different for different~$t$. Another restriction that must be taken
into account is that we may only use actually existing rows
of~$D_n(x)$, meaning that we must have $s+2t\le n-1$. In summary, the
number of vanishing linear combinations~\eqref{eq:lin2} of rows, or,
equivalently, the number of integers~$t$ with $0\le t\le s$ and
$s+2t\le n-1$, equals $\min\{s+1,\fl{(n-1-s)/2}+1\},$
which agrees with the claimed exponent.

\medskip
{\sc Step 5.} {\it
  $(x+3s+2)^{\min\{s+1,s+1-\cl{(3s+2-n)/2}}$ is a factor
of\/ $\det D_n(x)$ for $1\le s\le n-2$.}
We use~\eqref{eq:lin2} with $s$ replaced by $s+\frac {1} {3}$ and $t$
replaced by $t+\frac {1} {3}$. The conclusion is that
the linear combinations of
rows of $D_n(-3s-2)$ given by~\eqref{eq:lin2} vanish for $0\le t\le s$. They are
linearly independent since the highest row number involved is $s+2t+1$,
which is different for different~$t$. Another restriction that must be taken
into account is that we may only use actually existing rows
of~$D_n(x)$, meaning that we must have $s+2t+1\le n-1$. In summary, the
number of vanishing linear combinations~\eqref{eq:lin2} of rows, or,
equivalently, the number of integers~$t$ with $0\le t\le s$ and
$s+2t+1\le n-1$, equals $\min\{s+1,\fl{(n-2-s)/2}+1\}$,
which agrees with the claimed exponent.

\medskip
{\sc Step 6.} {\it
  $(x+3s+3)^{\min\{s+1,s+1-\cl{(3s+3-n)/2}\}}$ is a factor
of\/ $\det D_n(x)$ for $1\le s\le n-2$.}
We use~\eqref{eq:lin2} with $s$ replaced by $s+\frac {2} {3}$ and $t$
replaced by $t+\frac {2} {3}$. The conclusion is that
the linear combinations of
rows of $D_n(-3s-3)$ given in~\eqref{eq:lin2} vanish for $0\le t\le s$. They are
linearly independent since the highest row number involved is $s+2t+2$,
which is different for different~$t$. Another restriction that must be taken
into account is that we may only use actually existing rows
of~$D_n(x)$, meaning that we must have $s+2t+2\le n-1$. In summary, the
number of vanishing linear combinations~\eqref{eq:lin2} of rows, or,
equivalently, the number of integers~$t$ with $0\le t\le s$ and
$s+2t+2\le n-1$, equals $\min\{s+1,\fl{(n-3-s)/2}+1\}$,
which agrees with the claimed exponent.

\medskip
{\sc Step 7.}
{\it $\det D_n(x)$ is a polynomial in $x$ of degree at most $\binom {n+1}2+2n$.}
To see this, we replace column~$j$ of the matrix by
$$
\sum_{k=0}^j(-1)^{j-k}\binom jk(2k+3)!\,
(-x)_{2j-2k}\cdot(\text{column $k$ of the matrix}).
$$
Clearly, this can be achieved by elementary column operations.
Thereby, the $j$-th column is multiplied by $(2j+3)!$, and therefore
the new determinant equals $\det D_n(x)$ multiplied by 
\begin{equation} \label{eq:mult} 
\prod _{i=0} ^{n-1}(2i+3)!.
\end{equation}
Let $M(n)$ denote the new matrix.
The $(i,j)$-entry of $M(n)$ then is
$$
\sum_{k=0}^j(-1)^{j-k}\binom jk(2k+3)!\,(-x)_{2j-2k}
\left(4^i\binom {x+i+2k+3}{2k+3}+\binom {x-i+2k+3}{2k+3}\right).
$$
Using the standard hypergeometric notation
$$
{}_r F_s\!\left[\begin{matrix} a_1,\dots,a_r\\ b_1,\dots,b_s\end{matrix};  
z\right]=\sum _{l =0} ^{\infty}\frac {\po{a_1}{l }\cdots\po{a_r}{l }} 
{l !\,\po{b_1}{l }\cdots\po{b_s}{l }} z^l \ , 
$$
we have
\begin{multline} \label{eq:AA}
\sum_{k=0}^j(-1)^{j-k}\binom jk(2k+3)!\,(-x)_{2j-2k}\binom {x+i+2k+3}{2k+3}
\\=
(-1)^j(-x)_{2j}\,(x+i+1)_3\cdot
{} _{3} F _{2} \!\left [ \begin{matrix} 
\frac {x} {2}+\frac {i} {2}+\frac {5} {2},\frac {x} {2}+\frac {i} {2}+2,-j\\ 
\frac {x} {2}-j+1,\frac {x} {2}-j+\frac {1} {2}
\end{matrix} ; {\displaystyle 1}\right ].
\end{multline}
To this $_3F_2$-series we apply the transformation formula
(see \cite[Eq.~(3.1.1)]{GaRaAF})
$$
{} _{3} F _{2} \!\left [ \begin{matrix} { a, b, -n}\\ { d, e}\end{matrix} ;
   {\displaystyle 1}\right ]  =
{\frac { ({ \textstyle e-b}) _{n} }
      {({ \textstyle e}) _{n} }}  
{} _{3} F _{2} \!\left [ \begin{matrix} { -n, b, d-a}\\ { d, 1 + b - e -
       n}\end{matrix} ; {\displaystyle 1}\right ] ,
$$
where $n$ is a non-negative integer. Thus, we obtain 
\begin{align*}
\sum_{k=0}^j(-1)^{j-k}&\binom jk(2k+3)!\,(-x)_{2j-2k}\binom {x+i+2k+3}{2k+3}
\\
&=
(-1)^j
\frac {(x+i+1)_3\,(-x)_{2j}\,
(-j-\frac {i} {2}-\frac {3} {2})_j} {(\frac {x} {2}-j+\frac {1} {2})_j}
{} _{3} F _{2} \!\left [ \begin{matrix} 
-j,\frac {x} {2}+\frac {i} {2}+2,-j-\frac {i} {2}-\frac {3} {2}\\
\frac {x} {2}-j+1,\frac {i} {2}+\frac {5} {2}
\end{matrix} ; {\displaystyle 1}\right ]
\\
&=
2^{2j}\,(x+i+1)_3\,
(-j-\tfrac {i} {2}-\tfrac {3} {2})_j\\
&\kern1.5cm
\times
\sum_{k=0}^j(-1)^{j-k}\binom jk
\frac {(\frac {x} {2}-j+k+1)_{j-k}\,(\frac {x} {2}+\frac {i} {2}+2)_k\,(-j-\frac {i} {2}-\frac {3} {2})_k} 
{(\frac {i} {2}+\frac {5} {2})_k}.
\end{align*}
We see that this is a polynomial in $x$ of degree $j+3$, with leading
coefficient
\begin{multline*}
2^{2j}\,
(-j-\tfrac {i} {2}-\tfrac {3} {2})_j
\sum_{k=0}^j(-1)^{j-k}\binom jk
\frac {2^{-j}\,(-j-\frac {i} {2}-\frac {3} {2})_k} 
{(\frac {i} {2}+\frac {5} {2})_k}
\\=
(-1)^j2^{j}\,
(-j-\tfrac {i} {2}-\tfrac {3} {2})_j\,
{} _{2} F _{1} \!\left [ \begin{matrix} 
-j,-j-\frac {i} {2}-\frac {3} {2}\\\frac {i} {2}+\frac {5} {2}
\end{matrix} ; {\displaystyle 1}\right ].
\end{multline*}
The $_2F_1$-series can be evaluated by means of the Chu--Vandermonde
summation (see \cite[Eq.~(1.7.7); Appendix (III.4)]{SlatAC})
$$
{} _{2} F _{1} \!\left [ \begin{matrix} { a, -n}\\ { c}\end{matrix} ; {\displaystyle
   1}\right ]  = {\frac {({ \textstyle c-a}) _{n} }
    {({ \textstyle c}) _{n} }},
$$
where $n$ is a non-negative integer. After simplification, we see
that \eqref{eq:AA} is a polynomial in~$x$ of degree~$j+3$ with leading coefficient
$$
2^j(i+j+4)_j.
$$
In its turn, this implies that the $(i,j)$-entry of $M(n)$ is a polynomial
in~$x$ of degree~$j+3$ with leading coefficient
\begin{equation} \label{eq:leadij} 
2^j\big(4^i(i+j+4)_j+(-i+j+4)_j\big).
\end{equation}
Consequently, the determinant $\det M(n)$ is a polynomial in~$x$ of
degree at most
$$\sum_{j=0}^{n-1}(j+3)=\binom {n+1}2+2n.$$
Since $\det D_n(x)$ is a scalar multiple of $\det M(n)$, the same
degree bound holds for $\det D_n(x)$.

\medskip
{\sc Step 8.} {\it Computation of the leading coefficient of $\text{Pol}_n(x)$.}
In the previous step we found that the degree of $\det D_n(x)$
as a polynomial in~$x$ is at most $\binom
{n+1}2+2n$.
We are now going to show that this is the exact degree, by computing
the coefficient of $x^{\binom {n+1}2+2n}$ in~$\det D_n(x)$, which then is at the same
time the leading coefficient of~$\text{Pol}_n(x)$.

In order to compute this coefficient of $x^{\binom {n+1}2+2n}$, we
should recall from the previous step, that we transformed our original
determinant~$\det D_n(x)$ into (cf.\ the sentence
containing~\eqref{eq:mult})
\begin{equation} \label{eq:detM(n)}
\left(\prod _{i=0} ^{n-1}\frac {1} {(2i+3)!}\right)\det M(n),
\end{equation}
where the $(i,j)$-entry of $M(n)$ is a polynomial in~$x$ of
degree~$j+3$ with leading coefficient given by~\eqref{eq:leadij}.
Hence, the leading coefficient of $\det M(n)$ (the coefficient of
$x^{\binom {n+1}2+2n}$) equals
$$
2^{\binom n2}
\det_{0\le i,j\le n-1}\left(4^i(i+j+2)_j+(-i+j+2)_j\right).
$$
By applying column operations, this expression can be reduced to
$$
2^{\binom n2}
\det_{0\le i,j\le n-1}\left(4^ii^j+(-i)^j\right).
$$
By the same argument, this expression equals
$$
2^{\binom n2}
\det_{0\le i,j\le n-1}\left(4^ip_j(i)+p_j(-i)\right),
$$
where $p_j(t)$ is any monic polynomial in $t$ of degree~$j$.
We choose $p_j(t)=(t)_j$, so that we need to evaluate
\begin{multline} \label{eq:lead2}
2^{\binom n2}\det_{0\le i,j\le n-1}\left(4^i(i)_j+(-i)_j\right)\\
=
2^{\binom n2}\left(
\prod _{j=0} ^{n-1}j!\right)
\det_{0\le i,j\le n-1}\left(4^i\binom {i+j-1}j+\binom {-i+j-1}j\right).
\end{multline}
If we now combine \eqref{eq:detM(n)} and \eqref{eq:lead2}, and subsequently
evaluate the last determinant by means of Theorem~\ref{thm:einfach}
with $a=4$ and $x=0$, then we obtain the first expression in
parentheses on the right-hand side of~\eqref{eq:MS1}
for the leading coefficient of our determinant~$\det D_n(x)$.

\medskip
This completes the proof of the theorem modulo
Lemmas~\ref{lem:CK3}--\ref{lem:CK6} which are stated and proved separately below.
\end{proof}

\begin{lemma} \label{lem:CK3}
For all non-negative integers $s$, we have
$$
\sum_{i=0}^{s}(-1)^i2^{i}\binom s{i}
\left((1-\sqrt v)^{2s-i}-(1+\sqrt v)^{2s-i}\right)=0.
$$
\end{lemma}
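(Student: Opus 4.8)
The plan is to handle the two terms in the summand separately, recognizing each as a disguised instance of the binomial theorem. First I would lighten the notation by writing $w=\sqrt v$, so that the claimed identity reads
\[
\sum_{i=0}^{s}(-1)^i2^{i}\binom s{i}
\bigl((1-w)^{2s-i}-(1+w)^{2s-i}\bigr)=0 ,
\]
and then split the left-hand side as $S_{-}-S_{+}$, where
$S_{\mp}:=\sum_{i=0}^{s}(-1)^i2^{i}\binom s{i}(1\mp w)^{2s-i}$.
The decisive observation is that in each of these sums the exponent $2s-i$ factors as $s+(s-i)$, which isolates a term $(1\mp w)^{s}$ independent of the summation index and leaves behind a clean binomial sum.

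Carrying this out for $S_{-}$, I would factor $(1-w)^{2s-i}=(1-w)^{s}(1-w)^{s-i}$ and pull out $(1-w)^s$, obtaining
\[
S_{-}=(1-w)^{s}\sum_{i=0}^{s}\binom s{i}(-2)^i(1-w)^{s-i}
=(1-w)^{s}\bigl((1-w)-2\bigr)^{s}=(1-w)^{s}(-1-w)^{s}=(-1)^{s}(1-w^2)^{s},
\]
where the middle equality is the binomial theorem $\sum_i\binom si a^{s-i}b^i=(a+b)^s$ with $a=1-w$, $b=-2$. The same manoeuvre applied to $S_{+}$, now with $a=1+w$ and $b=-2$, gives
\[
S_{+}=(1+w)^{s}\sum_{i=0}^{s}\binom s{i}(-2)^i(1+w)^{s-i}
=(1+w)^{s}\bigl((1+w)-2\bigr)^{s}=(1+w)^{s}(w-1)^{s}=(-1)^{s}(1-w^2)^{s}.
\]

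Since both sums collapse to the identical symmetric expression $(-1)^{s}(1-w^2)^{s}$, their difference $S_{-}-S_{+}$ vanishes, which is exactly the assertion (and the result is a genuine polynomial identity in $w$, hence holds for $v=w^2$). There is really no serious obstacle here: the whole argument is two applications of the binomial theorem, and the only point requiring a little care is the sign and index bookkeeping in the reindexing $(1\mp w)^{2s-i}=(1\mp w)^{s}(1\mp w)^{s-i}$, together with the pleasant symmetry that forces the two halves to coincide rather than merely cancel term by term.
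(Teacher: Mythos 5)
Your proof is correct and takes essentially the same route as the paper's: both arguments apply the binomial theorem separately to the two halves of the sum and find that each collapses to the same expression $(-1)^s(1-v)^s$, so the difference vanishes. The only cosmetic difference is that you factor out $(1\mp w)^{s}$ and apply the binomial theorem with $a=1\mp w$, $b=-2$, whereas the paper factors out the full power $(1\mp\sqrt v)^{2s}$ and evaluates $\bigl(1-\tfrac{2}{1\mp\sqrt v}\bigr)^{s}$ --- the computations are identical.
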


\begin{proof}
By the binomial theorem, we get
\begin{align*}
\sum_{i=0}^{s}(-1)^i2^{i}\binom s{i}&
\left((1-\sqrt v)^{2s-i}-(1+\sqrt v)^{2s-i}\right)\\
&=(1-\sqrt v)^{2s}\left(1-\frac {2} {1-\sqrt v}\right)^s
-(1+\sqrt v)^{2s}\left(1-\frac {2} {1+\sqrt v}\right)^s\\
&=(1-\sqrt v)^{2s}\left(\frac {-1-\sqrt v} {1-\sqrt v}\right)^s
-(1+\sqrt v)^{2s}\left(\frac {-1+\sqrt v} {1+\sqrt v}\right)^s\\
&=0,
\end{align*}
as desired.
\end{proof}

\begin{lemma} \label{lem:CK5}
Let
\begin{equation} \label{eq:Syi} 
S(y,i):=4^i\left((1-\sqrt v)^{y-i}-(1+\sqrt v)^{y-i}\right)
+\left((1-\sqrt v)^{y+i}-(1+\sqrt v)^{y+i}\right).
\end{equation}
Then, for all non-negative real numbers~$s$ and~$t$ with $t\le s$ such
that all of $s+2t$, $2s+t$, $3s$, and~$3t$ are integers, we have
\begin{equation} \label{eq:lin1} 
\sum_{i=0}^{s+2t}(-1)^i2^{s+2t-i}\alpha(i)
\left(\sum_{j=0}^{3t}\binom {3t}j
\binom {2s+t-j}{s+2t-2j-i}\right)
S(3s,i)=0,
\end{equation}
where $\alpha(i)=1$ if $i>0$ and $\alpha(0)=\frac {1} {2}$.
\end{lemma}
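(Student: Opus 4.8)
The plan is to recast the left-hand side of \eqref{eq:lin1} as one half of a fully symmetric sum and then to exploit a hidden palindromy. Throughout write $a=1-\vv$ and $b=1+\vv$, so that $a+b=2$ and $ab=1-v$, and set $N:=s+2t$. First I would compress the inner sum by coefficient extraction: since $\binom{2s+t-j}{s+2t-2j-i}=[z^{\,s+2t-2j-i}](1+z)^{2s+t-j}$ and $\sum_{j\ge0}\binom{3t}{j}\bigl(z^2/(1+z)\bigr)^j=\bigl((1+z+z^2)/(1+z)\bigr)^{3t}$, one gets
\[
  \sum_{j=0}^{3t}\binom{3t}{j}\binom{2s+t-j}{s+2t-2j-i}
  =[z^{\,N-i}]\,P(z),\qquad P(z):=(1+z)^{2s-2t}(1+z+z^2)^{3t}.
\]
The hypotheses force $3t$ and $2s-2t$ to be non-negative integers (indeed $2s-2t$ is even, since $3\mid(3s-3t)$ once $s+2t,2s+t\in\mathbb Z$), so $P$ is a genuine polynomial of degree $2N$; being a product of the self-reciprocal polynomials $1+z$ and $1+z+z^2$, it is palindromic, i.e.\ its coefficients $c_m:=[z^m]P$ satisfy $c_m=c_{2N-m}$.

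Next I would introduce the summand $w_i:=(-1)^i2^{N-i}c_{N-i}\,S(3s,i)$, so that, with $\alpha(0)=\tfrac12$, the left-hand side of \eqref{eq:lin1} is exactly $\tfrac12 w_0+\sum_{i=1}^N w_i$. The decisive structural facts are the functional equation $S(3s,i)=4^iS(3s,-i)$, immediate from \eqref{eq:Syi}, and the palindromy $c_{N+i}=c_{N-i}$. Together with $(-1)^{-i}=(-1)^i$ and $2^{N+i}4^{-i}=2^{N-i}$, they give $w_{-i}=w_i$. Hence the symmetric sum $T:=\sum_{i=-N}^{N}w_i$ — which is the sum of \emph{all} nonzero $w_i$, because $c_{N-i}=0$ unless $-N\le i\le N$ — satisfies $T=w_0+2\sum_{i=1}^N w_i=2\bigl(\tfrac12 w_0+\sum_{i=1}^N w_i\bigr)$. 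In other words, the left-hand side of \eqref{eq:lin1} equals $\tfrac12 T$, and the weight $\alpha$ with its value $\tfrac12$ at $i=0$ is precisely what makes this symmetrization exact.

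It then remains to prove $T=0$, and the \emph{unrestricted} range of $T$ is exactly what permits a clean closed-form evaluation. Re-indexing by $m=N-i$ over the full support of $P$ yields, for any $r$,
\[
  \sum_{i}(-1)^i2^{N-i}c_{N-i}\,r^i=(-r)^N P(-2/r).
\]
Writing $S(3s,i)=a^{3s}\bigl((4/a)^i+a^i\bigr)-b^{3s}\bigl((4/b)^i+b^i\bigr)$ and applying this to the four geometric pieces gives
\[
  T=a^{3s}\bigl[(-4/a)^N P(-a/2)+(-a)^N P(-2/a)\bigr]
   -b^{3s}\bigl[(-4/b)^N P(-b/2)+(-b)^N P(-2/b)\bigr].
\]
Using the elementary identities $1-a/2=b/2$, $1-a/2+a^2/4=(3+v)/4$, $1-2/a=-b/a$, $1-2/a+4/a^2=(3+v)/a^2$ (and their $a\leftrightarrow b$ mirrors), together with the cancellation $4^{s-t}2^{-(2s-2t)}=1$ and $ab=1-v$, each of the four terms $a^{3s}(-4/a)^N P(-a/2)$, $a^{3s}(-a)^N P(-2/a)$, $b^{3s}(-4/b)^N P(-b/2)$, $b^{3s}(-b)^N P(-2/b)$ collapses to the common value $(-1)^N(1-v)^{2s-2t}(3+v)^{3t}$. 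The $a$-bracket therefore contributes $+2$ copies and the $b$-bracket $-2$ copies of this value, so $T=0$, whence the left-hand side of \eqref{eq:lin1} is $\tfrac12T=0$.

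I expect the only real difficulty to be bookkeeping rather than conceptual. One must verify with care that the real parameters $s,t$ (subject to $t\le s$ and $s+2t,2s+t,3s,3t\in\mathbb Z$) genuinely force $2s-2t$ to be a non-negative \emph{even} integer, so that $P$ is an honest palindromic polynomial and the coefficient extractions $[z^{N-i}]P$ are legitimate; and one must execute the (routine but error-prone) algebra that reduces $P(-a/2)$, $P(-2/a)$ and their $b$-analogues to the single value above. By contrast, the three conceptual ingredients — the functional equation for $S$, the palindromy of $P$, and the role of the half-weight at $i=0$ — slot together to produce the exact symmetrization $T=2\cdot(\text{LHS})$, after which $T=0$ follows from the closed form alone.
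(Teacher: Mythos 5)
Your proof is correct: I verified the coefficient-extraction identity $\sum_{j=0}^{3t}\binom{3t}{j}\binom{2s+t-j}{s+2t-2j-i}=[z^{N-i}]P(z)$ with $P(z)=(1+z)^{2s-2t}(1+z+z^2)^{3t}$, the symmetrization $w_{-i}=w_i$, and all four closed-form evaluations, including the prerequisite that $2s-2t$ is a non-negative even integer (which indeed follows from $s-t=(2s+t)-(s+2t)\in\mathbb{Z}$ together with $t\le s$).

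Structurally you share the paper's central trick: the half-weight $\alpha(0)=\tfrac12$, the functional equation $S(3s,i)=4^iS(3s,-i)$, and the $i\mapsto-i$ invariance of the inner binomial sum are combined to rewrite the left-hand side of \eqref{eq:lin1} as half of a symmetric sum over $-N\le i\le N$. But the implementation differs genuinely at two points. First, the paper proves the $i\mapsto-i$ invariance as a separate statement (Lemma~\ref{lem:CK6}) by a contour-integral argument; your extraction $[z^{N-i}]P(z)$ together with the palindromy of $P$ is an integral-free version of exactly that argument --- the paper's double contour integral collapses to the coefficient of $x^{s+2t-i}$ in the very same polynomial $(1+x+x^2)^{3t}(1+x)^{2s-2t}$, and its substitution $x\mapsto 1/x$ is your palindromy $c_m=c_{2N-m}$. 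Second, and this is the real divergence, the paper finishes by swapping the order of summation and killing the symmetrized sum term by term in $j$ via another auxiliary identity (Lemma~\ref{lem:CK3}, a binomial-theorem factorization); you instead evaluate the entire symmetric sum $T$ in one stroke, as four evaluations $(-r)^N P(-2/r)$ of the explicit polynomial $P$ at $r=4/a,\,a,\,4/b,\,b$, which all collapse to the common value $(-1)^N(1-v)^{2s-2t}(3+v)^{3t}$ and cancel in pairs. Your route is therefore self-contained --- it needs neither of the paper's two auxiliary lemmas nor any complex analysis --- and yields an explicit closed form for each of the four pieces rather than a bare vanishing statement; what the paper's modular route buys is that the symmetry of the inner sum is isolated as a standalone combinatorial identity, proved for its own sake, with the vanishing then delegated to the short and reusable Lemma~\ref{lem:CK3}.
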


\begin{remarks}
(1)
The integrality conditions on the parameters~$s$ and~$t$ may seem a
bit contrived. Indeed, these conditions are equivalent to saying that
the pair $(s,t)$ is of the form $(s_1+\frac
{u} {3}, t_1+\frac {u} {3})$, where all of $s_1,t_1,u$ are non-negative integers.
It is exactly in this form in which the lemma is used in Steps~4--6 of the proof
of Theorem~\ref{thm:MS1}. On the other hand, for the proof of the
lemma it is more convenient to have these conditions in this
``contrived" form.

\medskip
(2) The condition $t\le s$ is needed to make sure that the range of
the summation index~$i$ on the left-hand side of~\eqref{eq:lin1}
does not extend beyond~$3s$. The latter would yield negative
powers of $(1-\sqrt v)$ and $(1+\sqrt v)$ in the definition of
$S(3s,i)$, for which the below application of Lemma~\ref{lem:CK3} is
not possible.
\end{remarks}

\begin{proof}[Proof of Lemma \ref{lem:CK5}]
By the definition of $S(y,i)$ in~\eqref{eq:Syi}, the left-hand side
in~\eqref{eq:lin1} equals
\begin{align*}
&\sum_{i=0}^{s+2t}(-1)^i2^{s+2t-i}\alpha(i)
\left(\sum_{j=0}^{3t}\binom {3t}j
\binom {2s+t-j}{s+2t-2j-i}\right)\\
&\kern2cm
\cdot
\left(4^i\left((1-\sqrt v)^{3s-i}-(1+\sqrt v)^{3s-i}\right)
+\left((1-\sqrt v)^{3s+i}-(1+\sqrt v)^{3s+i}\right)\right)\\
&=
\sum_{i=0}^{s+2t}(-1)^i2^{s+2t+i}\alpha(i)
\left(\sum_{j=0}^{3t}\binom {3t}j
\binom {2s+t-j}{s+2t-2j-i}\right)
\left((1-\sqrt v)^{3s-i}-(1+\sqrt v)^{3s-i}\right)\\
&\kern1cm
+\sum_{i=0}^{s+2t}(-1)^i2^{s+2t-i}\alpha(i)
\left(\sum_{j=0}^{3t}\binom {3t}j
\binom {2s+t-j}{s+2t-2j-i}\right)\\
&\kern6cm
\cdot\left((1-\sqrt v)^{3s+i}-(1+\sqrt v)^{3s+i}\right)\\
&=
\sum_{i=0}^{s+2t}(-1)^i2^{s+2t+i}\alpha(i)
\left(\sum_{j=0}^{3t}\binom {3t}j
\binom {2s+t-j}{s+2t-2j-i}\right)
\left((1-\sqrt v)^{3s-i}-(1+\sqrt v)^{3s-i}\right)\\
&\kern1cm
+\sum_{i=-s-2t}^{0}(-1)^i2^{s+2t+i}\alpha(i)
\left(\sum_{j=0}^{3t}\binom {3t}j
\binom {2s+t-j}{s+2t-2j+i}\right)\\
&\kern6cm
\cdot\left((1-\sqrt v)^{3s-i}-(1+\sqrt v)^{3s-i}\right).
\end{align*}
Now we use Lemma~\ref{lem:CK6} to replace the term $-i$ in the first sum
over~$j$ by $+i$. Having done this, the two sums over~$i$ can now be
``concatenated" into one sum,
\begin{align*}
&\sum_{i=-s-2t}^{s+2t}(-1)^i2^{s+2t+i}
\left(\sum_{j=0}^{3t}\binom {3t}j
\binom {2s+t-j}{s+2t-2j+i}\right)
\left((1-\sqrt v)^{3s-i}-(1+\sqrt v)^{3s-i}\right)\\
&\kern.2cm
=
\sum_{j=0}^{3t}\binom {3t}j
\sum_{i=-s-2t}^{s+2t}(-1)^i2^{s+2t+i}
\binom {2s+t-j}{s+2t-2j+i}
\left((1-\sqrt v)^{3s-i}-(1+\sqrt v)^{3s-i}\right)\\
&\kern.2cm
=
\sum_{j=0}^{3t}\binom {3t}j
\sum_{i=0}^{2s+t-j}(-1)^i2^{i+2j}
\binom {2s+t-j}{i}
\left((1-\sqrt v)^{4s+2t-2j-i}-(1+\sqrt v)^{4s+2t-2j-i}\right),
\end{align*}
where we performed the shift of index $i\mapsto i-s-2t+2j$ to obtain
the last line.
For fixed~$j$, the inner sum over~$i$ vanishes due to
Lemma~\ref{lem:CK3} with $s$ replaced by $2s+t-j$. This proves the
assertion of the lemma.
\end{proof}

\begin{lemma} \label{lem:CK6}
For all non-negative integers $s$, $t$, and $i$, the sum
\begin{equation} \label{eq:sum} 
\sum_{j=0}^{3t}\binom {3t}j
\binom {2s+t-j}{s+2t-2j-i}
\end{equation}
is invariant under the replacement $i\mapsto-i$.
\end{lemma}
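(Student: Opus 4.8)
The plan is to collapse the double sum into a single coefficient extraction and then read off the invariance from a palindromic structure. Write $f(i)$ for the sum in~\eqref{eq:sum}. The starting point is the elementary identity $\binom{N}{k}=\coef{z^k}(1+z)^N$, which holds for every integer~$N$ and every integer~$k$ under the convention~\eqref{eq:bin} (both sides vanish when $k<0$, and for $k\ge0$ the right-hand side is the polynomial $\binom{N}{k}$ in~$N$). Applying this to the inner binomial gives $\binom{2s+t-j}{s+2t-2j-i}=\coef{z^{s+2t-2j-i}}(1+z)^{2s+t-j}$. Pulling the coefficient functional out of the finite $j$-sum by means of $\coef{z^{a-2j}}g(z)=\coef{z^{a}}z^{2j}g(z)$ (legitimate for a power series~$g$ since $2j\ge0$), and then summing by the binomial theorem, $\sum_{j=0}^{3t}\binom{3t}{j}\bigl(\tfrac{z^2}{1+z}\bigr)^{j}=\bigl(\tfrac{1+z+z^2}{1+z}\bigr)^{3t}$, I arrive at the compact formula
$$
f(i)=\coef{z^{s+2t-i}}\,(1+z)^{2s-2t}(1+z+z^2)^{3t}.
$$

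Set $P(z):=(1+z)^{2s-2t}(1+z+z^2)^{3t}$ and $m:=s+2t$, so that $f(i)=\coef{z^{m-i}}P(z)$. In this language the asserted invariance $f(i)=f(-i)$ is exactly the statement that the coefficients of~$P$ are symmetric about~$z^{m}$, i.e.\ $\coef{z^{m-i}}P=\coef{z^{m+i}}P$. The second step is to obtain this symmetry from the self-reciprocity of~$P$. Each factor is palindromic, since $z(1+z^{-1})=1+z$ and $z^{2}(1+z^{-1}+z^{-2})=1+z+z^2$; multiplying these relations and using $2m=(2s-2t)+6t$ gives
$$
z^{2m}P(1/z)=z^{2m}\,z^{-(2s-2t)}(1+z)^{2s-2t}\,z^{-6t}(1+z+z^2)^{3t}=P(z).
$$
For a palindromic polynomial this reciprocity is equivalent to $\coef{z^{k}}P=\coef{z^{2m-k}}P$ for all~$k$; taking $k=m-i$ yields $\coef{z^{m-i}}P=\coef{z^{m+i}}P$, which is precisely $f(i)=f(-i)$.

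The step I expect to be the main obstacle is the passage from the self-reciprocity $z^{2m}P(1/z)=P(z)$ to the coefficientwise symmetry $\coef{z^{k}}P=\coef{z^{2m-k}}P$, together with the verification that $P(z)=(1+z)^{2s-2t}(1+z+z^2)^{3t}$ is a palindromic polynomial of degree~$2m$, so that this passage is valid (the substitution $z\mapsto1/z$ then simply reverses a finite coefficient string). Everything else --- the identity $\binom{N}{k}=\coef{z^k}(1+z)^N$ under convention~\eqref{eq:bin}, including the case of negative upper index~$N$, and the interchange of the finite $j$-summation with the coefficient functional via $\coef{z^{a-2j}}g=\coef{z^{a}}z^{2j}g$ --- is routine formal-power-series bookkeeping that I would dispatch quickly. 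In this way the combinatorial identity of the lemma reduces to the one-line palindrome observation that both $1+z$ and $1+z+z^2$ are self-reciprocal, which is the shape of argument I would present; this is exactly the form in which the identity is needed in Steps~4--6 of the proof of Theorem~\ref{thm:MS1} (through Lemma~\ref{lem:CK5}).
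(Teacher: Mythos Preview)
Your argument is correct and lands on exactly the same key expression as the paper: both reduce the sum to the coefficient of $z^{s+2t-i}$ in $(1+z)^{2s-2t}(1+z+z^2)^{3t}$ and then conclude from its self-reciprocity (the paper phrases this as the substitution $x\mapsto 1/x$ in a contour integral, you as the palindrome property). The paper reaches that expression via a longer detour --- a two-variable contour integral, summation of a geometric series, and a residue computation in~$y$ --- whereas your direct use of $\binom{N}{k}=\coef{z^k}(1+z)^N$ together with the binomial theorem bypasses the residue step entirely and is more elementary.

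One caveat, which applies equally to the paper's own argument: the palindrome step requires $P(z)$ to be an honest polynomial, hence $s\ge t$. For $s<t$ the factor $(1+z)^{2s-2t}$ is a genuine infinite power series, the substitution $z\mapsto 1/z$ no longer just reverses a finite coefficient string, and in fact the assertion fails (for instance $s=0$, $t=1$ gives $f(1)=1$ but $f(-1)=0$). Since Lemma~\ref{lem:CK6} is only invoked inside Lemma~\ref{lem:CK5} under the standing hypothesis $t\le s$, nothing downstream is affected, but you should state the restriction $s\ge t$ explicitly.
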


\begin{proof}
We write the sum in \eqref{eq:sum} in terms of a complex contour
integral. We have
$$
\sum_{j=0}^{3t}\binom {3t}j
\binom {2s+t-j}{s+2t-2j-i}
=\sum_{j=0}^{3t}\frac {1} {(2\pi \ii)^2}\int_{C_x}\int_{C_y}
\frac {(1+y)^{3t}} {y^{3t-j+1}}
\frac {(1+x)^{2s+t-j}} {x^{s+2t-2j-i+1}}\,dy\,dx,
$$
where $C_x$ and $C_y$ are contours encircling the origin once in positive
direction. We assume in both cases that the contours are strictly
contained in the unit disk which has the origin as centre.
In the formula above, $\ii$ stands for $\sqrt{-1}$.

We may extend the sum to all non-negative~$j$ because this only
adds vanishing terms. Moreover, since we assumed that along the
contours the moduli of $x$ and~$y$ are always strictly less than~1, we
may interchange integrals and sum and then evaluate the arising
geometric series. The conclusion is that the sum in~\eqref{eq:sum} is
equal to
$$
\frac {1} {(2\pi \ii)^2}\int_{C_x}\int_{C_y}
\frac {(1+y)^{3t}} {y^{3t+1}}
\frac {(1+x)^{2s+t}} {x^{s+2t-i+1}}
\frac {1} {1-\frac {x^2y} {1+x}}\,dy\,dx.
$$
Now we may blow up the contour $C_y$. We will pick up a residue
at the singularity $y=\frac {1+x} {x^2}$. On the other hand, since
(for fixed~$x$) the integrand is of the order $O(y^{-2})$ as
$|y|\to\infty$, the limit of the integral as the contour tends
to infinity vanishes. In summary, this leads to the expression
$$
\frac {1} {2\pi \ii}\int_{C_x}
\frac {\left(1+\frac {1+x} {x^2}\right)^{3t}}
      {\left(\frac {1+x} {x^2}\right)^{3t+1}}
\frac {(1+x)^{2s+t+1}} {x^{s+2t-i+3}}
\,dx
=
\frac {1} {2\pi \ii}\int_{C_x}
\frac {\left(1+x+x^2\right)^{3t}(1+x)^{2s-2t}} {x^{s+2t-i+1}}
\,dx
$$
for the sum in~\eqref{eq:sum}.

Our task is to show that the last expression is invariant under the
replacement\break $i\mapsto-i$. Indeed, the substitution $x\mapsto
\frac {1} {x}$ turns this expression into itself with $+i$ in place
of~$-i$. This completes the proof of the lemma.
\end{proof}

\begin{remark}
It would be possible, using Lemmas~\ref{lem:CK3}--\ref{lem:CK6}, to
provide an alternative proof of Theorem~\ref{thm:detx41}.
We are also convinced that proofs in a similar style of
Theorems~\ref{thm:D3} and~\ref{thm:D4} are possible.
In its turn, via the determinantal relations established in
Lemmas~\ref{lem:CK1} and~\ref{lem:CK2}, this would
yield new proofs of the enumerative results in~\cite{CoHuKr23}.
\end{remark}

\section{Variations on the theme, IV}
\label{sec:var5}

We conclude with further variations of Theorem~\ref{thm:DiFran}.
Here, the power $2^i$ remains unchanged, but
the terms $2j$ in the binomials are replaced
by~$4j$. Conjecture~\ref{conj:4j} contains the determinant
evaluation of this type that we found experimentally which does not
contain any shifts.
We applied the holonomic Ansatz, and we are confident that it
would go through once our computers are ``strong" enough to carry out
the necessary calculations. At this point in time, however, we must
leave the determinant evaluation as a conjecture.
Moreover, we performed again an automated search for determinant
evaluations where shifts are allowed. This led to the --- again
conjectural --- discovery of many more determinant evaluations; see
Proposition~\ref{prop:4j} and
Conjecture~\ref{conj:det42}. The same remark applies here:
we are confident that all of these could be proved by the holonomic Ansatz
once our computers dispose of sufficient computational power.

\begin{conjecture} \label{conj:4j}
For all positive integers $n$, we have
\begin{multline}
\det_{0\le i,j\le n-1}\left(2^i\binom {i+4j+3}{4j+3}
+\binom {-i+4j+3}{4j+3}\right)
\\
=
\frac {2^{n^2 - n + 1} 3^{2 n} 5^{-\frac58 n^2 + \frac54 n}
(2 n)!\, (\frac 23)_n 
\displaystyle\prod _{i=1} ^{n}\frac {(6 i - 4)!}{(5 i)!} }
{\displaystyle 
\prod _{i=1} ^{\fl{(n+3)/4}}\textstyle(\frac 15)_{ n + 3 - 4 i}
\displaystyle 
\prod _{i=1} ^{\fl{(n+2)/4}}\textstyle(\frac 25)_{ n + 2 - 4 i}
\displaystyle 
\prod _{i=1} ^{\fl{(n+1)/4}}\textstyle(\frac 35)_{ n + 1 - 4 i}
\displaystyle 
\prod _{i=1} ^{\fl{n/4}}\textstyle(\frac 45)_{ n - 4 i}}\\
\times
\begin{cases} 
\displaystyle 5^{3/8},
&\text{for }n=4m-3,\\
\displaystyle 1,
&\text{for }n=4m-2,\\
\displaystyle 5^{-1/8},
&\text{for }n=4m-1,\\
\displaystyle 1,
&\text{for }n=4m.
\end{cases} \label{eq:conj4j}
\end{multline}
\end{conjecture}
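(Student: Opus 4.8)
The plan is to attack \eqref{eq:conj4j} by the holonomic Ansatz of Section~\ref{sec:holonom}, exactly as the authors anticipate, and I will spell out the concrete steps together with the one genuinely new feature: the period-$4$ behaviour of the right-hand side. Writing $a_{i,j}=2^i\binom{i+4j+3}{4j+3}+\binom{-i+4j+3}{4j+3}$, I would first compute the data $c_{n,j}$ from~\eqref{cnj} over a sufficient range, guess a (left) Gr\"obner basis of an annihilating ideal $\mathfrak I$ for $c_{n,j}$ (of larger holonomic rank than in the $2j$ cases), and define $\tilde c_{n,j}$ as the unique solution of $\mathfrak I$ matching the appropriate finitely many initial values. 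As in the proofs of Theorems~\ref{thm:DiFran} and~\ref{thm:det24}, the correctness of the guess reduces to verifying \eqref{H1} and~\eqref{H2}: from $\mathfrak I$ one derives a univariate recurrence for the diagonal $\tilde c_{n,n-1}$ and checks that the constant sequence~$1$ solves it together with the first few values, and one proves the vanishing sums~\eqref{H2} by splitting into $\sum_j 2^i\binom{i+4j+3}{4j+3}\tilde c_{n,j}$ and $\sum_j\binom{-i+4j+3}{4j+3}\tilde c_{n,j}$, computing an annihilator of each by creative telescoping, and checking the finite set of initial values forced by the supports and by the zeros of the leading coefficients.

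The final step~\eqref{H3} yields a recurrence in~$n$ for $s_n=\det D_n/\det D_{n-1}$, and here the case distinction must be matched. The conjectured right-hand side $b_n$ has the feature that $b_n/b_{n-1}$ is \emph{not} a single hypergeometric term in~$n$: because of the four floor-function products $(\tfrac15)_{n+3-4i},\dots,(\tfrac45)_{n-4i}$ and the four-valued trailing factor, it is a hypergeometric term times a sequence of period~$4$. Concretely, I would compute $b_n/b_{n-1}$ in closed form, observe that $(b_n/b_{n-1})\big/(b_{n-4}/b_{n-5})$ is rational in~$n$, so that $b_n/b_{n-1}$ is annihilated by an operator of the form $q(n)\Sn^4-p(n)$ (support $\{\Sn^4,1\}$), and then verify by right division that this operator is a right factor of the creative-telescoping operator for $s_n$. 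Matching enough initial values to cover all four residues of~$n\bmod 4$ and the zeros of the leading coefficient would complete the proof. The main obstacle is precisely the computational one the authors flag: for the $4j$ determinant the guessed recurrences, and above all the creative-telescoping certificates for~\eqref{H2} and~\eqref{H3}, are of much higher order and polynomial degree than in the $2j$ cases of Theorems~\ref{thm:DiFran} and~\ref{thm:det24}, and at present exceed the available memory.

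As a computer-free alternative I would adapt the constant-term proof of Theorem~\ref{thm:einfach}. Using $\binom Nk=\CT_z(1+z)^N z^{-k}$ and pulling the column factor $\bigl((1+z_j)/z_j\bigr)^{4j}$ out of column~$j$, the determinant becomes, with $u_j=(1+z_j)/z_j$ and $X_j=\sqrt2\,(1+z_j)$,
\[
\det_{0\le i,j\le n-1}(a_{i,j})
= 2^{\frac12\binom n2}\,\CT_{\mathbf z}\Bigl(\prod_{j}\frac{(1+z_j)^{4j+3}}{z_j^{4j+3}}\Bigr)\,
\det_{0\le i,j\le n-1}\bigl(X_j^{\,i}+X_j^{-i}\bigr),
\]
and the inner determinant evaluates, by \cite[Eq.~(2.5)]{KratBN}, to a constant times $\prod_{i<j}(z_i-z_j)\prod_{i<j}\bigl(1-2(1+z_i)(1+z_j)\bigr)$. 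Symmetrising over the $z$'s then replaces the non-symmetric monomial $\prod_j u_j^{4j}$ by the Vandermonde-type determinant $\det_{i,j}(u_i^{4j})=\prod_{i<j}(u_j^4-u_i^4)$, while the Vandermonde $\prod_{i<j}(z_i-z_j)$ is retained. The hard part will be the ensuing coefficient extraction: unlike the $2j$ case, where $\prod_{i<j}(z_i-z_j)\cdot\prod_{i<j}(u_j-u_i)$ collapsed to a Vandermonde square with the transparent leading coefficient $(-1)^{\binom n2}n!$, here the factorisation $u_j^4-u_i^4=(u_j-u_i)(u_j+u_i)(u_j^2+u_i^2)$ leaves the extra, non-Vandermonde product $\prod_{i<j}(u_j+u_i)(u_j^2+u_i^2)$, and the remaining constant term must be taken against the deformation product $\prod_{i<j}\bigl(1-2(1+z_i)(1+z_j)\bigr)$. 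I expect this residue-type extraction to be exactly where the quintic features (the $(5i)!$ and the Pochhammer symbols of fifths) and the dependence on $n\bmod 4$ enter, and it is the step I regard as the genuine mathematical obstacle on this route.
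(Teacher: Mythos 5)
First, a point of orientation: the statement you are proving is Conjecture~\ref{conj:4j} of the paper, and the paper itself contains \emph{no proof} of it --- the authors state explicitly that they attempted the holonomic Ansatz and had to leave the evaluation as a conjecture because the computations do not terminate on available hardware. Your proposal, read as a proof, has the same status: both of your routes end at openly admitted obstacles, so neither constitutes a proof. For the first route, your structural observations are sound --- in particular, the point that $b_n/b_{n-1}$ is a hypergeometric term times a period-$4$ sequence, hence annihilated by an operator with support $\{\Sn^4,1\}$, is the correct adaptation of the $\{\Sn^2,1\}$ device used in the proof of Theorem~\ref{thm:DiFran}. But every decisive step (guessing the annihilator $\mathfrak{I}$ of the $c_{n,j}$ from \eqref{cnj}, the creative-telescoping certificates for \eqref{H2} and \eqref{H3}, the right division of the resulting operator) is deferred to a computation that is known not to finish: Table~\ref{tab:comp} records that for these $4j$-determinants the authors did obtain a rank-$5$ annihilator and the recurrence for \eqref{H1}, but the entries for \eqref{H2} and \eqref{H3} are blank --- precisely the steps your plan leaves to the machine. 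A proof whose essential content is ``run creative telescoping'' is vacuous when those runs are exactly what cannot currently be done.

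Your second route is genuinely different from anything in the paper (the paper uses constant-term calculus only for Theorem~\ref{thm:einfach} and for the $q$-analogue in the final section), and the setup is correct as far as it goes: pulling out the column factors, evaluating the inner determinant via \cite[Eq.~(2.5)]{KratBN}, and symmetrizing to replace $\prod_j u_j^{4j}$ by $\det(u_i^{4j})=\prod_{i<j}(u_j^4-u_i^4)$ are all legitimate. However, the step you yourself flag as ``the genuine mathematical obstacle'' is the entire content of the conjecture. In the $2j{=}j$ case of Theorem~\ref{thm:einfach}, the extraction collapsed because $\prod_{i<j}(z_i-z_j)(u_j-u_i)$ is a Vandermonde square over $\prod_j z_j^{n-1}$, whose relevant monomial coefficient is the transparent $(-1)^{\binom n2}n!$, and the deformation product then contributes only its constant term. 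Here the leftover factor $\prod_{i<j}(u_i+u_j)(u_i^2+u_j^2)$ couples nontrivially to $\prod_{i<j}\bigl(1-2(1+z_i)(1+z_j)\bigr)$, and all of the structure of the right-hand side of \eqref{eq:conj4j} --- the $(5i)!$, the Pochhammer symbols at fifths, the dependence on $n \bmod 4$ --- must emerge from exactly this extraction, for which you offer no method and no candidate identity (Selberg/Morris-type or otherwise). So the gap is not a technicality: in both routes the argument stops where the actual difficulty begins, and the statement remains, as in the paper, a conjecture.
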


As in previous sections, we performed a systematic search for determinants
of the same form. Let us denote
\[
  G_{\alpha,\beta,\gamma,\delta}(n) :=
  \det_{0\leq i,j\le n-1}\left(
  2^{i+\beta}\binom{i+4j+\gamma}{4j+\alpha}
  +\binom{-i+4j+\delta}{4j+\alpha}\right).
\]
In the parameter space $-6\leq\alpha,\beta\leq9$ and
$-9\leq\gamma,\delta\leq9$, we have identified $18$ cases of determinants that
factor completely. Unfortunately, we were not able to prove their conjectured
evaluations, but at least we can state some simple relationships.

\begin{prop} \label{prop:4j}
For all integers $n\geq2$ we have the following relations:
\begin{align}
G_{0, 1, -2, -4}(n) &= 3 \, G_{4, 2, 3, -1}(n-1) = 3 \, G_{8, 3, 8, 2}(n-2), \\
G_{1, 1, 0, -2}(n) &= -2 \, G_{5, 2, 5, 1}(n-1), \\
G_{3, 3, 2, -4}(n) &= -20 \, G_{7, 4, 7, -1}(n-1).
\end{align}
\end{prop}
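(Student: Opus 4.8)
The plan is to prove all three relations by exploiting the block structure of the underlying matrices, exactly in the spirit of the arguments used for the analogous relations in Theorems~\ref{thm:det22} and~\ref{thm:det33}. The single observation that drives everything is that the parameter shift $(\alpha,\beta,\gamma,\delta)\mapsto(\alpha+4,\beta+1,\gamma+5,\delta+3)$ is equivalent to the index shift $(i,j)\mapsto(i+1,j+1)$. Indeed, replacing $i$ by $i+1$ and $j$ by $j+1$ in the $(i,j)$-entry of the matrix of $G_{\alpha,\beta,\gamma,\delta}(n)$ turns $2^{i+\beta}$ into $2^{i+(\beta+1)}$, the upper parameter $i+4j+\gamma$ into $i+4j+(\gamma+5)$, the common lower parameter $4j+\alpha$ into $4j+(\alpha+4)$, and the parameter $-i+4j+\delta$ into $-i+4j+(\delta+3)$. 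Consequently, the matrix of $G_{\alpha+4,\beta+1,\gamma+5,\delta+3}(n-1)$ is precisely the lower-right $(n-1)\times(n-1)$ submatrix obtained from the matrix of $G_{\alpha,\beta,\gamma,\delta}(n)$ by deleting row~$0$ and column~$0$. One checks directly that the three relations match this pattern: $(0,1,-2,-4)\mapsto(4,2,3,-1)\mapsto(8,3,8,2)$, $(1,1,0,-2)\mapsto(5,2,5,1)$, and $(3,3,2,-4)\mapsto(7,4,7,-1)$.

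Next I would show that in each case the larger matrix has a row~$0$ of the form $(c,0,0,\dots,0)$, so that Laplace expansion along that row (equivalently, the block-triangular determinant formula) yields the determinant of the lower-right submatrix multiplied by the corner constant~$c$. For $G_{0,1,-2,-4}(n)$ the $(0,j)$-entry is $2\binom{4j-2}{4j}+\binom{4j-4}{4j}$; under convention~\eqref{eq:bin} each of these binomials contains a zero factor in its defining falling product whenever $j\ge1$, so the entry vanishes there, while the $(0,0)$-entry equals $2\binom{-2}{0}+\binom{-4}{0}=3$. This gives $\det$ of $G_{0,1,-2,-4}(n)$ equal to $3$ times $\det$ of $G_{4,2,3,-1}(n-1)$. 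Applying the same strip-off a second time to $G_{4,2,3,-1}(n-1)$, whose $(0,j)$-entry $4\binom{4j+3}{4j+4}+\binom{4j-1}{4j+4}$ vanishes for $j\ge1$ and equals $4\binom{3}{4}+\binom{-1}{4}=1$ at $j=0$, establishes the chained equality with $G_{8,3,8,2}(n-2)$. The second relation follows identically: the $(0,j)$-entry of $G_{1,1,0,-2}(n)$ is $2\binom{4j}{4j+1}+\binom{4j-2}{4j+1}$, which is $0$ for $j\ge1$ and equals $2\binom{0}{1}+\binom{-2}{1}=-2$ at $j=0$. For the third, the $(0,j)$-entry of $G_{3,3,2,-4}(n)$ is $8\binom{4j+2}{4j+3}+\binom{4j-4}{4j+3}$, which is $0$ for $j\ge1$ and equals $8\binom{2}{3}+\binom{-4}{3}=-20$ at $j=0$.

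There is no genuinely hard step here; the argument is entirely elementary once the shift correspondence is in place. The only point requiring care—what I would regard as the ``main obstacle'' in the sense of being the place where a slip is easiest—is the verification that the off-diagonal row-$0$ entries vanish across the \emph{entire} row. This relies essentially on the convention~\eqref{eq:bin}, under which $\binom{\alpha}{p}$ for $p\ge0$ is the falling product $\alpha(\alpha-1)\cdots(\alpha-p+1)/p!$ and hence vanishes precisely when that product passes through~$0$; the spacing by $4j$ in both the upper and lower parameters is exactly what keeps both binomials zero simultaneously for all $j\ge1$. After confirming this vanishing and recording the surviving corner constants $3$, $1$, $-2$, and $-20$, the stated equalities are immediate, and the whole proposition holds for all $n\ge2$ (and the chained first line for all $n\ge3$, where the double strip-off is meaningful).
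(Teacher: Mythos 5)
Your proposal is correct and follows essentially the same route as the paper, which likewise establishes these relations by exploiting the block structure of the matrices (zero blocks in the upper right corner, of size $2\times(n-2)$ for $G_{0,1,-2,-4}(n)$ and $1\times(n-1)$ for the others) together with the shift correspondence $(\alpha,\beta,\gamma,\delta)\mapsto(\alpha+4,\beta+1,\gamma+5,\delta+3)$. Your write-up merely makes explicit what the paper leaves implicit: the verification that the off-corner entries vanish and the values $3$, $1$, $-2$, $-20$ of the surviving corner constants, all of which check out.
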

\begin{proof}
These identities can easily be established by exploiting the block structure
of the matrices $G_{0, 1, -2, -4}(n)$ respectively $G_{4, 2, 3,
  -1}(n)$, $G_{1, 1, 
  0, -2}(n)$, $G_{3, 3, 2, -4}(n)$, which have a block of zeros (of size
$2\times(n-2)$ respectively $1\times(n-1)$) in their upper right corner.
\end{proof}

\begin{conjecture} \label{conj:det42}
The following determinant evaluations hold for all $n\geq1$:
\begin{align}
G_{0, 2, 3, -1}(n) &= \prod_{i=1}^{n} \frac{(2i-1)(4i-3)(4i-1)\,
  \Gamma\bigl(6i\bigr)\,\Gamma\bigl(\frac{i+3}{4}\bigr)}{i(i+1)(i+2)(3i-1)\,
  \Gamma\bigl(5i-1\bigr)\,\Gamma\bigl(\frac{5i+3}{4}\bigr)}, \label{eq:det42a} 
\\
G_{1, 3, 6, 0}(n) &= \prod_{i=1}^{n} \frac{8(2i-1)(2 i+1)^2(4i-1)(4i+1)\,
  \Gamma\bigl(6i+2\bigr)\,\Gamma\bigl(\frac{i+2}{4}\bigr)}{(i+1)(i+2)(i+3)(i+4)\,
  \Gamma\bigl(5i+2\bigr)\,\Gamma\bigl(\frac{5i+6}{4}\bigr)}, \label{eq:det42b} 
\\
G_{1, 1, 0, -2}(n) &= -4\prod_{i=1}^{n} \frac{(3i-2)\,
  \Gamma\bigl(6i-5\bigr)\,\Gamma\bigl(\frac{i}{4}\bigr)}{8\,
  \Gamma\bigl(5i-4\bigr)\,\Gamma\bigl(\frac{5i}{4}\bigr)}, \label{eq:det42c} 
\\
G_{3, 0, 3, 3}(n) &= 2\prod_{i=1}^{n} \frac{
  \Gamma\bigl(6i-1\bigr)\,\Gamma\bigl(\frac{i+3}{4}\bigr)}{
  \Gamma\bigl(5i\bigr)\,\Gamma\bigl(\frac{5i-1}{4}\bigr)}, \label{eq:det42d} 
\\
G_{2, 1, 2, 0}(n) &= \prod_{i=1}^{n} \frac{
  \Gamma\bigl(6i-1\bigr)\,\Gamma\bigl(\frac{i+2}{4}\bigr)}{2(2i-1)\,
  \Gamma\bigl(5i-1\bigr)\,\Gamma\bigl(\frac{5i-2}{4}\bigr)}. \label{eq:det42e} 
\end{align}
Moreover, the following identities are conjectured to hold for all $n\geq3$:
\begin{align}
G_{3, 0, 3, 3}(n) &= \frac23 \, G_{0, 1, -2, -4}(n+1) = -\frac{1}{672} \, G_{1, 3, -2, -8}(n+1) \notag \\
&= \frac{1}{63} \, G_{5, 4, 3, -5}(n) = \frac{4}{1002001} \, G_{6, 6, 3, -9}(n) =
-\frac85 \, G_{9, 5, 8, -2}(n-1), \\
G_{1, 1, 0, -2}(n) &= -\frac{1}{49} \, G_{2, 3, 0, -6}(n) = -\frac27 \, G_{6, 4, 5, -3}(n-1) =
-\frac{4}{5577} \, G_{7, 6, 5, -7}(n-1), \\
G_{2, 1, 2, 0}(n) &= 2 \, G_{7, 4, 7, -1}(n-1).
\end{align}
\end{conjecture}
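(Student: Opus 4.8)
We expect the conjecture to be provable as follows. It bundles two kinds of assertions: the explicit product evaluations \eqref{eq:det42a}--\eqref{eq:det42e}, valid for all $n\ge1$, and the three linear relations among the $G_{\alpha,\beta,\gamma,\delta}(n)$ stated thereafter for $n\ge3$. The plan is to establish the product formulas by the holonomic Ansatz of Section~\ref{sec:holonom}, precisely as the analogous ``$2j$'' and ``$3j$'' families were treated in Theorems~\ref{thm:det22} and~\ref{thm:det33}, and to derive the relations from the block structure of the underlying matrices, as in Proposition~\ref{prop:4j} and equation~\eqref{eq:LR}.

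For each of \eqref{eq:det42a}--\eqref{eq:det42e} we would proceed along the five familiar steps. First, compute the cofactor data $c_{n,j}$ from~\eqref{cnj} for a range of~$n$; second, guess a holonomic system of recurrences for $c_{n,j}$, arranged as a Gr\"obner basis, and let $\tilde{c}_{n,j}$ denote the solution it pins down; third, prove~\eqref{H1} by extracting the induced univariate recurrence for $\tilde{c}_{n,n-1}$ and checking that the constant sequence~$1$ solves it together with the initial values; fourth, prove~\eqref{H2} by creative telescoping, splitting $\sum_{j} a_{i,j}\tilde{c}_{n,j}$ into its two binomial summands and verifying the finitely many initial values forced by the operator supports and their leading coefficients; fifth, prove~\eqref{H3} by matching the creative-telescoping recurrence for the determinant ratio against the ratio $b_n/b_{n-1}$ of the conjectured right-hand side. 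The denominators~$4$ in the $\Gamma$-arguments of \eqref{eq:det42a}--\eqref{eq:det42e} signal that $b_n/b_{n-1}$ is hypergeometric in~$n/4$, so the verifying operator in the fifth step should factor through one with support $\{\Sn^4,1\}$.

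The three relations we would handle by the block-decomposition technique behind Proposition~\ref{prop:4j}. One checks that the parameter shift $(\alpha,\beta,\gamma,\delta)\mapsto(\alpha+4,\beta+1,\gamma+5,\delta+3)$ realizes the index shift $(i,j)\mapsto(i+1,j+1)$, so that whenever two of the determinants differ by this shift and a corner block of zeros is present, Laplace expansion peels off an explicit small block and leaves the target matrix. The parameter differences in the present three relations do not all match this single shift, however; for those we would mimic~\eqref{eq:LR} and seek explicit unipotent row- and column-transformations $L_n$ and $R_n$ carrying $G_{\alpha,\beta,\gamma,\delta}(n)$ into block-triangular form with the target matrix as one block and an explicit $1\times1$ or $2\times2$ block furnishing the scalar. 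Correctness of each such transformation then reduces to a telescoping binomial-sum identity, provable by Gosper's and Zeilberger's algorithms.

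The main obstacle is precisely the computational one the authors flag. Replacing $2j$ by $4j$ sharply raises the order of the summands in~$j$, which inflates both the guessed recurrences in the second step and the creative-telescoping certificates in the fourth step beyond the reach of present hardware for symbolic~$n$. An attractive alternative, in the spirit of the proofs of Theorems~\ref{thm:D3}, \ref{thm:D4}, and~\ref{thm:MS1}, is constant-term calculus: writing $\binom Nk=\CT_z(1+z)^Nz^{-k}$ and performing a $4$-section of the row-generating functions over the fourth roots of unity --- in place of the $2$-section via $\pm\vv$ used for the ``$2j$'' families --- one would express each row through powers of $(1-\zeta v^{1/4})$ and then hope to evaluate the resulting determinant by a root-of-unity analogue of \cite[Eq.~(2.5)]{KratBN}. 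Pinning down the correct such evaluation, and controlling the $4$-periodic prefactors that the denominators~$4$ above make visible, would be the crux of that route.
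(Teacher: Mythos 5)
You should first be aware that the statement you were given is a \emph{conjecture} in the paper: the authors explicitly state in Section~\ref{sec:var5} that they could not prove it, that they expect the holonomic Ansatz to apply in principle, and that ``at this point in time the capacity of the available computers is not sufficient to actually carry out the necessary computations.'' There is therefore no proof in the paper to compare against, and your proposal must be judged on whether it closes that gap. It does not: it is a restatement of the very plan the authors already had and could not execute, with every substantive step deferred to computations that are never performed. For the product formulas \eqref{eq:det42a}--\eqref{eq:det42e} you supply no guessed recurrence system for $c_{n,j}$, no creative-telescoping certificates, and no list of initial conditions; but these computations are exactly where the project founders. Table~\ref{tab:comp} records the authors' own attempt on this family: the annihilator has holonomic rank~$5$ with coefficient degrees up to $59$ already for~\eqref{H1}, and the columns for \eqref{H2} and \eqref{H3} are empty because those computations did not finish. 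A plan whose every step is ``run the algorithm'' is not a proof when the algorithm demonstrably cannot be run on present hardware.

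The same criticism applies, more pointedly, to the three scalar relations. You correctly observe that their parameter differences do not match the index shift $(\alpha,\beta,\gamma,\delta)\mapsto(\alpha+4,\beta+1,\gamma+5,\delta+3)$ --- which is precisely why the paper proves the relations of Proposition~\ref{prop:4j} by block structure but leaves these ones conjectural --- and you propose instead to mimic~\eqref{eq:LR} with explicit unipotent transformations $L_n$, $R_n$. But no such transformations are exhibited, not even conjecturally, and producing them is the entire missing content: the scalars $\frac23$, $-\frac1{672}$, $\frac1{63}$, $\frac4{1002001}$, $-\frac85$, $-\frac1{49}$, $-\frac27$, $-\frac4{5577}$, $2$ would have to emerge from the corner blocks of such a decomposition, and there is no evidence that suitable $L_n$, $R_n$ of manageable form exist (in the one worked example~\eqref{eq:LR}, the matrix $L_n$ with entries $2^{i-j+1}-1$ was found ad hoc and its correctness reduced to a specific telescoping identity). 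Finally, the constant-term alternative you sketch hinges on a ``root-of-unity analogue of \cite[Eq.~(2.5)]{KratBN}'' for a $4$-section which you do not state, let alone prove; the paper's constant-term arguments (second proof of Theorem~\ref{thm:einfach}, and the $q$-analogue in Section~\ref{sec:open}) rely on the specific bilinear structure that \cite[Eq.~(2.5)]{KratBN} handles, and it is not clear that any usable analogue exists after a $4$-section. In short, your proposal correctly identifies the obstacles but overcomes none of them, so the statement remains exactly as open as it is in the paper.
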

Note that \eqref{eq:det42d} is the same determinant as in~\eqref{eq:conj4j}.

{\tiny
\begin{table}
  \begin{center}
  \begin{tabular}{@{}c|l|cccccccccc}
    \multicolumn{2}{l|}{Theorem} & \ref{thm:einfach} & \ref{thm:DiFran} & \ref{thm:D3} &
    \ref{thm:D4} & \ref{thm:det22}  & \ref{thm:det33} & \ref{conj:det33x} & \ref{thm:det24} &
    \ref{thm:detx41} & \ref{conj:det42}
    \\ \hline\hline \rule{0pt}{10pt}%
    \multirow{4}{*}{$\mathfrak{I}$}
    & hol.\ rank & 2 & 3 & 3 & 3 & 3 & 4 & 4 & 3 & 3 & 5
    \\
    & degree in $n$ & 2 & 11 & 14 & 14 & 10--14 & 7--8 & 7 & 5 & 5 & 14--19
    \\
    & degree in $j$ & 4 & 11 & 12 & 12 & 7--11 & 8--17 & 9--11 & 5--8 & 9 & 10--15
    \\
    & {\tt ByteCount} & 0.03 & 0.16 & 2.12 & 1.94 & 0.08--0.17 & 0.06--0.23 & 0.26--0.51 &
      0.03--0.05 & 0.26 & 0.24--0.52
    \\
    \hline\rule{0pt}{10pt}%
    \multirow{2}{*}{\eqref{H1}}
    & order of rec. & 2 & 3 & 3 & 3 & 3 & 4 & 4 & 3 & 3 & 5
    \\
    & degree in $n$ & 3 & 15 & 25 & 25 & 12--21 & 18--26 & 21--23 & 6--10 & 11 & 49--59
    \\
    \hline \rule{0pt}{10pt}%
    \multirow{4}{*}{\eqref{H2}}
    & time 1. sum & 0.006 & 0.67 & - & - & 0.18--0.74 & 2.26--24.6 & - & 0.01--0.03 & 2.54 & -
    \\
    & time 2. sum & 0.009 & 0.71 & - & - & 0.19--0.83 & 3.36--20.8 & - & 0.01--0.04 & 7.18 & -
    \\
    & hol.\ rank & 3 & 6 & - & - & 6 & 10 & - & 7 & 7 & -
    \\
    & {\tt ByteCount} & 0.03 & 1.32 & - & - & 0.71--1.62 & 0.47--2.51 & - & 0.04--0.19 & 2.08 & -
    \\
    \hline\rule{0pt}{10pt}%
    \multirow{4}{*}{\eqref{H3}}
    & time 1. sum & 0.005 & 0.77 & 21.7 & 17.0 & 0.47--0.92 & 7.39--12.2 & - & 0.02--0.04 & 0.35 & -
    \\
    & time 2. sum & 0.011 & 0.6 & - & 65.7 & 0.35--0.71 & 3.81--8.42 & - & 0.01--0.03 & 0.97 & -
    \\
    & order of rec. & 2 & 6 & - & 6 & 6 & 10 & - & 5 & 5 & -
    \\
    & degree in $n$ & 1 & 52 & - & 75 & 45--57 & 74--93 & - & 14--22 & 24 & -
  \end{tabular}
  \end{center}
  \vskip6pt
  \caption{Computational data from the proofs by holonomic Ansatz; if there
    is more than a single determinant in a theorem, the range of values over all
    instances is displayed; if in such a situation only a single value appears, it means
    that all instances had the same value. {\tt ByteCount} refers to the homonymous {\sl Mathematica}
    command (applied to the final annihilator, not the intermediate creative telescoping
    results) and the values are given in MB. All timings are given in hours. }
  \label{tab:comp}
\end{table}
}

\section{Open questions}
\label{sec:open}

In this paper, we have proven 68 determinant evaluations that are inspired
by Di Francesco's determinant for twenty-vertex configurations (Theorems
\ref{thm:einfach}, \ref{thm:DiFran}, \ref{thm:D3}, \ref{thm:D4},
\ref{thm:det22}, 
\ref{thm:det33}, 
\ref{thm:det24}, 
\ref{thm:detx41},~\ref{thm:MS1}). We found another
21 determinants that seem to have a nice closed form, which we unfortunately
were not able to prove (Conjectures~\ref{conj:det33x}, 
\ref{conj:4j},
\ref{conj:det42}, 
and Proposition~\ref{prop:4j}). 
Also the precise description of the polynomial factor in $F_{3,0,x+3,x+3}(n)$
is left as an open problem (Conjecture~\ref{conj:MS1}). Most of these determinants
were found by computer search in certain ranges, and some of them were found to
belong to infinite families. It would be interesting to study whether the remaining
ones, which at the moment seem to be ``sporadic'' and unsystematic cases, can be
explained and characterized, and whether there are more examples outside of our search
ranges.

\medskip
Another intriguing question concerns the existence of $q$-analogues of the presented
determinant formulas.
Indeed, Theorem~\ref{thm:einfach} has the following $q$-analogue.

\begin{theorem}
For all non-negative integers $n$, we have
\begin{equation} \label{eq:qdet} 
\det_{0\le i,j\le n-1}\left(
a^i\frac {(x q^{1+i};q)_j} {(q;q)_j}
+\frac {(x q^{1-i};q)_j} {(q;q)_j}\right)
=2\,q^{-\binom n3}(-x)^{\binom n2}
\prod _{i=0} ^{n-1}(a q^{i};q)_{i},
\end{equation}
where $(\alpha;q)_p:=(1-\alpha)(1-q\alpha)\cdots(1-q^{p-1}\alpha)$
for $p\ge1$ and $(\alpha;q)_0:=1$.
\end{theorem}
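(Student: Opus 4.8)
The plan is to exploit a special feature of the right-hand side of~\eqref{eq:qdet}: since the product $\prod_{i=0}^{n-1}(aq^i;q)_i$ does not involve $x$, the asserted formula says that the determinant is, up to an $x$-free factor, the \emph{pure monomial} $(-x)^{\binom n2}$. So I would split the proof into two parts: (a) show that the determinant is a monomial in $x$ of degree $\binom n2$, and (b) compute its leading coefficient, which is an $x$-free determinant evaluable by the same tools as in the second proof of Theorem~\ref{thm:einfach}. As a preliminary reduction I would pull the factor $1/(q;q)_j$ out of column~$j$; this leaves the determinant $\det_{0\le i,j\le n-1}M_{ij}$ with $M_{ij}=a^i(xq^{1+i};q)_j+(xq^{1-i};q)_j$, each entry a polynomial in $x$ of degree~$j$, so that $\det(M_{ij})$ has degree at most $\binom n2$ in~$x$.

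The heart of the argument is the monomial claim. Expanding the entries, the coefficient of $x^k$ in $M_{ij}$ is $(-1)^kq^k\,e_k(1,q,\dots,q^{j-1})\,(a^iq^{ki}+q^{-ki})$, where $e_k$ is the $k$-th elementary symmetric function. The decisive observation is that the $i$-dependent factor $w^{(k)}_i:=a^iq^{ki}+q^{-ki}$ is \emph{independent of $j$}, while the remaining factor $\gamma_{j,k}:=(-1)^kq^k e_k(1,\dots,q^{j-1})$ is a scalar. Thus column~$j$ equals $\sum_{k=0}^{j}x^k\gamma_{j,k}w^{(k)}$, and expanding $\det(M_{ij})$ by multilinearity in the columns shows that the coefficient of $x^d$ is a sum of terms $\bigl(\prod_j\gamma_{j,d_j}\bigr)\det\!\bigl[w^{(d_0)}\,|\,\cdots\,|\,w^{(d_{n-1})}\bigr]$ over tuples with $d_j\le j$ and $\sum_jd_j=d$. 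Such a determinant vanishes unless the $d_j$ are pairwise distinct, and distinct $d_j$ subject to $d_j\le j$ force $d_j=j$ for all $j$, whence $d=\binom n2$. Consequently every coefficient of $x^d$ with $d<\binom n2$ vanishes, so $\det(M_{ij})=C_n\,x^{\binom n2}$ is a monomial, with $C_n=\bigl(\prod_{j}\gamma_{j,j}\bigr)\det_{0\le i,j\le n-1}(a^iq^{ij}+q^{-ij})=(-1)^{\binom n2}q^{\binom{n+1}3}\det_{0\le i,j\le n-1}(a^iq^{ij}+q^{-ij})$.

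It remains to evaluate $\det_{0\le i,j\le n-1}(a^iq^{ij}+q^{-ij})$. Here I would write the entry as $a^{i/2}\bigl(W_j^{\,i}+W_j^{-i}\bigr)$ with $W_j=\sqrt a\,q^{j}$, factor $a^{i/2}$ out of each row, and apply exactly the determinant evaluation \cite[Eq.~(2.5)]{KratBN} used in the second proof of Theorem~\ref{thm:einfach}, obtaining the product $2\prod_{0\le i<j\le n-1}(W_j-W_i)\,(W_iW_j-1)/(W_iW_j)$. Substituting $W_j=\sqrt a\,q^j$ turns the factors $W_iW_j-1$ into $aq^{i+j}-1$, and the point that makes the $a$-dependence match is that the exponent multisets $\{\,i+j:0\le i<j\le n-1\,\}$ and $\{\,i+k:0\le k<i\le n-1\,\}$ coincide, so that $\prod_{i<j}(aq^{i+j}-1)=\pm\prod_{i=0}^{n-1}(aq^i;q)_i$. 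Reassembling $C_n$ with the prefactor $\prod_j 1/(q;q)_j$ and the monomial $x^{\binom n2}$ should then reproduce the right-hand side of~\eqref{eq:qdet}.

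I expect the conceptual obstacle to be entirely contained in the monomial argument and the clean reduction to $\det(a^iq^{ij}+q^{-ij})$; after that, the evaluation is the verbatim $q$-analogue of the classical computation and re-uses the same lemma. The only genuinely laborious step will be the bookkeeping of the powers of $q$ and of $\sqrt a$ when matching $2\prod_{i<j}(W_j-W_i)(W_iW_j-1)/(W_iW_j)$ against $2q^{-\binom n3}\prod_i(aq^i;q)_i$; to pin down the scalar I would verify the exponent identities $\sum_{0\le i<j\le n-1}(i+j)=(n-1)\binom n2$ and $\sum_{j=0}^{n-1}\binom{j+1}2=\binom{n+1}3$, and check the $a$-power cancellation $a^{\frac12\binom n2}\cdot a^{\frac12\binom n2}\cdot a^{-\binom n2}=1$ coming from the row factor, the Vandermonde factor, and the denominators $W_iW_j$, respectively.
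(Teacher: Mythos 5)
Your proposal is correct, and the bookkeeping you defer does work out: with $\gamma_{j,j}=(-1)^j q^{\binom{j+1}{2}}$, the row factor $a^{\frac12\binom n2}$ from writing $a^iq^{ij}+q^{-ij}=a^{i/2}\bigl(W_j^i+W_j^{-i}\bigr)$, and the identifications $\prod_{0\le i<j\le n-1}(q^j-q^i)=(-1)^{\binom n2}q^{\binom n3}\prod_{j=0}^{n-1}(q;q)_j$ and $\prod_{0\le i<j\le n-1}(aq^{i+j}-1)=(-1)^{\binom n2}\prod_{i=0}^{n-1}(aq^i;q)_i$, everything assembles (the prefactors $\prod_j 1/(q;q)_j$ cancel against the Vandermonde contribution) to $2\,(-x)^{\binom n2}q^{E}\prod_{i=0}^{n-1}(aq^i;q)_i$ with $E=\binom{n+1}{3}+\binom n3-(n-1)\binom n2=-\binom n3$, exactly the right-hand side of~\eqref{eq:qdet}.

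Your route does differ genuinely from the paper's in the structural half. The paper establishes the monomial property analytically: it writes each entry as a constant term via the $q$-binomial theorem, symmetrizes the resulting multivariate constant term over the variables $z_0,\dots,z_{n-1}$, evaluates a Vandermonde determinant, and then uses skew-symmetry in the quantities $xz_0,\dots,xz_{n-1}$ together with a degree bound to conclude that the determinant equals $(-x)^{\binom n2}$ times an $x$-free constant. You reach the same conclusion by pure linear algebra: the coefficient of $x^k$ in the $(i,j)$-entry splits as a scalar depending on $(j,k)$ times a column vector (in $i$) depending on $k$ alone, so multilinearity in the columns plus the forced staircase $d_j=j$ kills every coefficient below the top one. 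From that point on the two proofs coincide: both extract the leading coefficient entrywise, reduce to $\det_{0\le i,j\le n-1}\bigl((a^{1/2}q^j)^i+(a^{1/2}q^j)^{-i}\bigr)$ after pulling $a^{i/2}$ out of row~$i$, and evaluate this by \cite[Eq.~(2.5)]{KratBN}. What each approach buys: yours is more elementary and self-contained (no infinite $q$-products, no symmetrization), and it makes the vanishing of all lower-order coefficients completely transparent; the paper's constant-term method is the same uniform tool as in its second proof of Theorem~\ref{thm:einfach}, and it yields a representation that can in principle track a nontrivial $x$-dependence, which matters in situations (such as Theorem~\ref{thm:MS1}) where the determinant is not a monomial in the parameter.
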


\begin{proof}
We begin in the spirit of the second proof of
Theorem~\ref{thm:einfach} in Section~\ref{sec:warmup}.
In particular, we adopt the constant-term
notation from there.

Using the $q$-binomial theorem (see \cite[Eq.~(1.3.2);
  Appendix~(II.3)]{GaRaAF}) 
$$
\sum_{k=0} ^{\infty}\frac {(a;q)_k} {(q;q)_k}z^k
=
  \frac {{( \let\over/ a z;q)_\infty}}
{{( \let\over/  z;q)_\infty}},
$$
our determinant on the left-hand side of \eqref{eq:qdet} can be written as
\begin{align*}
\CT_{\mathbf z}&
\det_{0\le i,j\le n-1}
\left(z_i^{-j}\left(a^i\frac {(xz_iq^{1+i};q)_\infty} {(z_i;q)_\infty}
      +\frac {(xz_iq^{1-i};q)_\infty} {(z_i;q)_\infty}\right)\right)\\
&=\CT_{\mathbf z}
\left(
\prod _{i=0} ^{n-1}\frac {z_i^{-n+1}} {(z_i;q)_\infty}
\left(a^i {(xz_iq^{1+i};q)_\infty} 
      + {(xz_iq^{1-i};q)_\infty} \right)\right)
\det_{0\le i,j\le n-1}
\left(z_i^{n-j-1}\right).
\end{align*}
The last determinant can be evaluated by means of the evaluation of the
Vandermonde determinant. Thus, we obtain
$$
\CT_{\mathbf z}
\left(
\prod _{i=0} ^{n-1}\frac {(xz_iq^{n};q)_\infty}
      {z_i^{n-1}\,(z_i;q)_\infty}
\left(a^i {(xz_iq^{1+i};q)_{n-i-1}} 
      + {(xz_iq^{1-i};q)_{n+i-1}} \right)\right)
\prod _{0\le i<j\le n-1} ^{}(z_i-z_j)
$$
for the determinant on the left-hand side of \eqref{eq:qdet}.
Again, since this is a constant term, we get the same value if we permute the
variables $z_0,z_1,\dots, z_{n-1}$. So, we symmetrize the last
expression and get
\begin{multline*}
\frac {1} {n!}\CT_{\mathbf z}
\left(
\prod _{i=0} ^{n-1}\frac {(xz_iq^{n};q)_\infty}
      {z_i^{n-1}\,(z_i;q)_\infty}
\right)
\left(\prod _{0\le i<j\le n-1} ^{}(z_i-z_j)\right)\\
\times
\det_{0\le i,j\le n-1}
\left(a^i {(xz_jq^{1+i};q)_{n-i-1}} 
      + {(xz_jq^{1-i};q)_{n+i-1}} \right)
\end{multline*}
for our determinant.
The determinant in the above expression
is a polynomial in $xz_0,xz_1,\dots,xz_{n-1}$,
which is skew-symmetric in these quantities. Hence, it is divisible
by the Vandermonde product
$$\prod _{0\le i<j\le n-1} ^{}(xz_i-xz_j)
=x^{\binom n2}\prod _{0\le i<j\le n-1} ^{}(z_i-z_j).
$$
This shows that 
the determinant on the left-hand side of~\eqref{eq:qdet} equals
$$
\frac {x^{\binom n2}} {n!}\CT_{\mathbf z}
\left(
\prod _{i=0} ^{n-1}\frac {(xz_iq^{n};q)_\infty}
      {z_i^{n-1}\,(z_i;q)_\infty}
\right)
\left(\prod _{0\le i<j\le n-1} ^{}(z_i-z_j)^2\right)
f(xz_0,xz_1,\dots,xz_{n-1}),
$$
where $f(xz_0,xz_1,\dots,xz_{n-1})$ is some polynomial in
the given quantities.

Now, repeating arguments from the proof of Theorem~\ref{thm:einfach},
the square of the Vandermonde product, $\prod _{0\le i<j\le n-1} ^{}
\left(z_i-z_j\right)^2$, is a homogeneous polynomial of degree $n(n-1)$.
Moreover, it is not very difficult to see that the coefficient of
$(z_0z_1\cdots z_{n-1})^{n-1}$ in it equals~$(-1)^{\binom n2}n!$. 
In view of what we have found so far,
this implies that the determinant
on the left-hand side of~\eqref{eq:qdet} equals
$$
(-x)^{\binom n2}
f(0,0,\dots,0).
$$

It remains to compute the constant $f(0,0,\dots,0)$.
By inspection, the highest power of~$x$ in the determinant on the
left-hand side of~\eqref{eq:qdet} is exactly $x^{\binom n2}$.
Thus, we will obtain $(-1)^{\binom n2}f(0,0,\dots,0)$ if
we take the highest coefficient of each individual entry of this
determinant, that is, if we compute
\begin{multline*}
\det_{0\le i,j\le n-1}\left(
a^i\frac {(-1)^jq^{\binom j2}\left( q^{1+i}\right)^j} {(q;q)_j}
+\frac {(-1)^jq^{\binom j2}\left(q^{1-i}\right)^j}  {(q;q)_j}\right)\\
=(-1)^{\binom n2}
q^{\binom n3+\binom n2}a^{\frac {1} {2}\binom n2}
\left(
\prod _{j=0} ^{n-1}\frac {1} { {(q;q)_j}}\right)
\det_{0\le i,j\le n-1}\left(
\left(a^{1/2} q^{j}\right)^i
+\left(a^{1/2}q^{j}\right)^{-i}\right).
\end{multline*}
This determinant can be evaluated by means of \cite[Eq.~(2.5)]{KratBN}.
After some simplification, one obtains the desired result.
\end{proof}

However, (so far?)
we were not able to find $q$-analogues of any of the other determinant evaluations
proved or conjectured in this paper.

\medskip
Since Di Francesco's original determinant arose in combinatorics, we propose
as a future research direction to come up with combinatorial interpretations
of our ``variations on the theme''. In this regard, we report
two compelling coincidences,
where some of our {\it product formulas} appear in a seemingly unrelated 
--- combinatorial --- context. These may hint at where to look for
such combinatorial interpretations.

Namely,
in \cite{FiSAAA}, Fischer and Schreier-Aigner consider the $(-1)$-enumeration
of arrowed Gelfand--Tsetlin patterns. These are intimately related
to alternating sign matrices, and thus to configurations in the six-vertex
model. The main results in~\cite{FiSAAA} ``overlap" with two of our
results. However, what the exact relationship is, is mysterious
to us, as we now explain.

In Theorem~1 of \cite{FiSAAA} it is shown that a certain $(-1)$-enumeration
of arrowed Gelfand--Tsetlin patterns is given by
$$
2^n
\prod _{i=1} ^{n}
\frac {(m-n+3i+1)_{i+1}\,(m-n+i+1)_i}
      {\left(\frac {m-n+i+2} {2}\right)_{i-1}\,(i)_i}.
$$
It is not difficult to see that, if in this expression we replace
$m$ by $x+n-1$, then we obtain exactly the right-hand side
of~\eqref{eq:CK1} multiplied by $2^{n-1}$. Although Fischer and
Schreier-Aigner also obtain the above formula by a determinant
evaluation, the relationship with our determinant and the
enumeration of domino tilings of generalized Aztec triangles
(cf.\ the proof of Theorem~\ref{thm:D3} in Section~\ref{sec:conj2})
eludes us.

On the other hand, in Theorem~2 of \cite{FiSAAA} Fischer and Schreier-Aigner
consider another $(-1)$-enumeration of arrowed Gelfand--Tsetlin patterns
and find that it is given by
$$
3^{\binom {n+1}2}
\prod _{i=1} ^{n}\frac {(2n+m+2-3i)_i} {(i)_i}
=
3^{\binom {n+1}2}
\prod _{i=1} ^{n}\frac {(m-n+3i+2)_{n-i}} {(i)_i}.
$$
Here, visibly, if in this expression we replace
$m$ by $x+n-1$, then we obtain the right-hand side
of~\eqref{detx41} multiplied by $2^{-\binom {n}2-1}3^{n}$.
Again, Fischer and
Schreier-Aigner obtain this formula by a determinant
evaluation, which however does not help us to understand what this
has to do with our determinant.

\bibliographystyle{plain}
\bibliography{dets}

\end{document}